\newtheorem{theorem}{Theorem}[section]
\newtheorem{lemma}[theorem]{Lemma}
\newtheorem{proposition}[theorem]{Proposition}
\newtheorem{corollary}[theorem]{Corollary}
\newtheorem{conjecture}[theorem]{Conjecture}
\theoremstyle{definition} 
\newtheorem{definition}[theorem]{Definition}
\newtheorem{warning}[theorem]{Warning}
\newtheorem{convention}[theorem]{Convention}
\newtheorem{remark}[theorem]{Remark}
\DeclareMathOperator{\Alex}{Alex}
\DeclareMathOperator{\Cat}{Cat}
\DeclareMathOperator{\ext}{ext}
\DeclareMathOperator{\external}{external}
\DeclareMathOperator{\glob}{global}
\DeclareMathOperator{\HFS}{HFS}
\DeclareMathOperator{\HFK}{HFK}
\DeclareMathOperator{\Hom}{Hom}
\DeclareMathOperator{\id}{id}
\DeclareMathOperator{\Ind}{Ind}
\DeclareMathOperator{\loc}{loc}
\DeclareMathOperator{\local}{local}
\DeclareMathOperator{\moving}{moving}
\DeclareMathOperator{\op}{op}
\DeclareMathOperator{\refi}{ref}
\DeclareMathOperator{\Rest}{Rest}
\DeclareMathOperator{\un}{un}
\DeclareMathOperator{\unmoving}{unmoving}
\newcommand{\A}{\mathcal{A}}
\newcommand{\B}{\mathcal{B}}
\newcommand{\co}{\colon}
\newcommand{\F}{\mathbb{F}}
\newcommand{\gl}{\mathfrak{gl}}
\newcommand{\I}{\mathcal{I}}
\newcommand{\Ib}{\mathbf{I}}
\newcommand{\K}{\mathcal{K}}
\newcommand{\m}{\mathfrak{m}}
\newcommand{\Rc}{\mathcal{R}}
\newcommand{\Sc}{\mathcal{S}}
\newcommand{\U}{\mathcal{U}}
\newcommand{\Uq}{\U_q(\gl(1|1))}
\newcommand{\X}{\mathcal{X}}
\newcommand{\Xt}{\widetilde{\X}}
\newcommand{\x}{\mathbf{x}}
\newcommand{\y}{\mathbf{y}}
\newcommand{\Z}{\mathbb{Z}}
\title[Singular crossings and the Kauffman-states functor]{Singular crossings and Ozsv{\'a}th-Szab{\'o}'s Kauffman-states functor}
\author[Andrew Manion]{Andrew Manion}
\thanks {This research was supported by an NSF MSPRF fellowship, grant number DMS-1502686, as well as an NSF FRG grant number DMS-1664240.}
\address{Department of Mathematics, USC, 3620 S. Vermont Ave., Los Angeles, CA 90089}
\email{amanion@usc.edu}
\begin{document}

\begin{abstract} Recently, Ozsv{\'a}th and Szab{\'o} introduced some algebraic constructions computing knot Floer homology in the spirit of bordered Floer homology, including a family of algebras $\B(n)$ and, for a generator of the braid group on $n$ strands, a certain type of bimodule over $\B(n)$. We define analogous bimodules for singular crossings. Our bimodules are motivated by counting holomorphic disks in a bordered sutured version of a Heegaard diagram considered previously by Ozsv{\'a}th, Stipsicz, and Szab{\'o}.
\end{abstract}

\maketitle

\section{Introduction}

Heegaard Floer homology, introduced by Ozsv{\'a}th and Szab{\'o} \cite{HFOrig,PropsApps}, is part of a relatively small family of topological invariants that are well-suited for distinguishing homeomorphic but non-diffeomorphic smooth $4$-manifolds. Physically, these invariants stem from $4$-dimensional topological quantum field theories (TQFTs). Along with the Chern--Simons TQFTs in $3$ dimensions used by Witten to interpret the Jones polynomial \cite{WittenChernSimonsJones}, 4d TQFTs distinguishing exotic $4$-manifolds (especially Donaldson theory) were a primary motivation for Atiyah's mathematical axiomatization of TQFTs in \cite{AtiyahTQFT}.

Since the mid-1990s, much interest has focused on ``extended'' TQFTs, which have extra structure beyond what Atiyah proposed. Many interesting TQFTs can be at least partially extended, and impressive classification results have been proved for ``fully extended'' TQFTs (see \cite{BDCobordism, Lurie, AyalaFrancis}). In Heegaard Floer homology, important steps toward a once-extended TQFT structure were taken by Lipshitz--Ozsv{\'a}th--Thurston under the name of bordered Floer homology \cite{LOTBorderedOrig}, which is now an active research program.

In \cite{OSzNew, OSzNewer, OSzHolo, OSzPong}, Ozsv{\'a}th--Szab{\'o} adapt the methods of bordered Floer homology to obtain an efficient algebraic description of knot Floer homology ($\HFK$), a Heegaard Floer invariant for knots and links defined originally in \cite{HFKOrig, RasmussenThesis}. In this paper we will refer to their theory as the ``Kauffman-states functor'' because it gives a functorial tangle invariant involving (partial) Kauffman states of a tangle projection, analogous to the states defined in \cite{FKT}. Indeed, their theory arises from holomorphic disk counts in a local variant of the Heegaard diagram from \cite{AltKnots}, shown in Figure~\ref{fig:NonsingularDiag}, whose Heegaard Floer generators correspond to Kauffman states. A computer program \cite{HFKCalc} based on the Kauffman-states functor is impressively fast and can compute $\HFK$ for most knots of up to $40$-$50$ crossings.

\begin{figure}
	\includegraphics[scale=0.625]{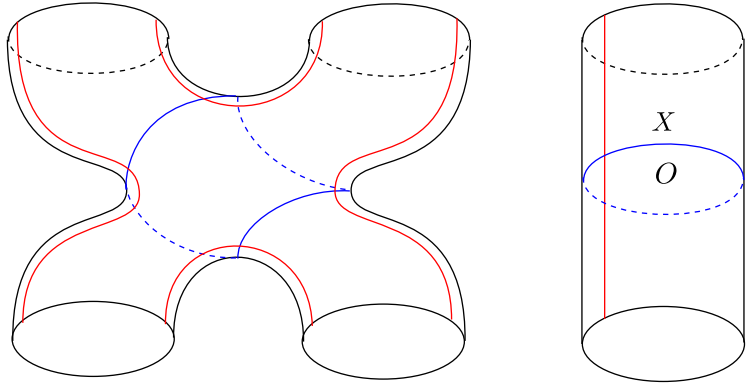}
	\caption{Local pieces of the Kauffman-states Heegaard diagram from \cite{AltKnots}.}
	\label{fig:NonsingularDiag}
\end{figure}

While $\HFK$ has a wealth of topological applications, it is also interesting when asking about the relationship between the 4d and 3d examples motivating Atiyah's axioms. Heegaard Floer homology began its life on the 4d side, but it has surprising connections with the 3d Chern--Simons theory associated to the Lie superalgebra $\mathfrak{gl}(1|1)$. For example, the Euler characteristic of $\HFK$ recovers the Alexander polynomial of a knot, which can be viewed as arising from $\mathfrak{gl}(1|1)$ similarly to how the Jones polynomial arises from $\mathfrak{sl}(2)$. More generally, one expects that Heegaard Floer homology (including its extended TQFT aspects) ``categorifies'' the $\mathfrak{gl}(1|1)$ Chern--Simons TQFT; we are interested in making this vague statement as precise and complete as possible.

A reasonable once-extended version of the fact that $\chi(HFK)$ recovers the Alexander polynomial says that to a tangle, bordered Floer homology should associate a bimodule categorifying the $\Uq$-linear map associated to the tangle. Out of various possible ways to define such bimodules (see e.g. \cite{PV, EPV}), Ozsv{\'a}th--Szab{\'o}'s Kauffman-states functor has the advantage of a certain minimality property: the computations of \cite{ManionDecat} imply that generators of their bimodule for a crossing are in bijection with nonzero matrix entries in the corresponding $\Uq$-linear map, with no cancellation upon taking the Euler characteristic.

The algebraic methods typically used to study categorified Chern--Simons theories define bimodules for crossings as mapping cones on morphisms between singularized and resolved crossings. Arguments of Ozsv{\'a}th--Szab{\'o} \cite{OSzCube} and Manolescu \cite{ManolescuCube} imply that the mapping cone relationship holds up to homotopy equivalence for the knot Floer complexes of a closed knot. This relationship is a natural way to study connections between $\HFK$ and HOMFLY-PT homology; an unresolved conjecture of Dunfield--Gukov--Rasmussen \cite{DGR} posits a spectral sequence from HOMFLY-PT homology to $\HFK$. 

To localize the construction of \cite{OSzCube,ManolescuCube} to tangles using bordered Floer homology, one imagines cutting the ``planar'' Heegaard diagram of \cite{OSzCube, ManolescuCube} into pieces to which bordered Floer invariants can be assigned by counting holomorphic disks. Such a localization could be of interest in categorification as well as in Heegaard Floer homology, especially given work in preparation of Rapha{\"e}l Rouquier and the author \cite{ManionRouquier} situating Khovanov's categorification of $\U^+_q(\gl(1|1))$ \cite{KhovOneHalf} and tensor products of its higher representations in the flexible framework of bordered and cornered Floer homology. However, the diagrams obtained by naively decomposing the planar diagram are not easy to analyze using bordered Floer techniques; see \cite{ManionDiagrams} for an alternate decomposition that may have better properties.

Alishahi--Dowlin \cite{AlishahiDowlin} recently introduced differential graded bimodules for singular crossings over Ozsv{\'a}th--Szab{\'o}'s algebras from \cite{OSzNew}. Using these bimodules and a mapping cone construction, they define bimodules for nonsingular crossings which appear to be nontrivially related to Ozsv{\'a}th--Szab{\'o}'s Kauffman-states bimodules (Dowlin, private communication). Alishahi--Dowlin's bimodules were motivated by holomorphic disk counts in ``global'' Heegaard diagrams for closed singular knots, rather than ``local'' diagrams as in bordered Floer homology, and it is not clear whether there is a local Heegaard diagram giving rise to Alishahi--Dowlin's bimodule for a singular crossing. 

We work from the other direction in this paper, starting from a Heegaard diagram for a singular crossing introduced by Ozsv{\'a}th--Stipsicz--Szab{\'o} in \cite{OSSz}. One local version of this diagram is shown in Figure~\ref{fig:SingularDiag}. A stabilized version of the diagram was given in \cite{ManionDiagrams} (based on ideas of Ozsv{\'a}th--Szab{\'o}); we slightly modify the stabilized diagram by adding corners to view it as a bordered sutured Heegaard diagram as in \cite{Zarev}. See Figure~\ref{fig:StabilizedSingularDiag} below for an illustration.

\begin{figure}
	\includegraphics[scale=0.625]{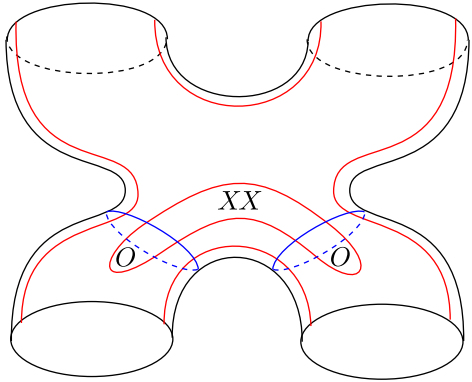}
	\caption{The local piece of the Kauffman-states Heegaard diagram for a singular crossing from \cite{OSSz}.}
	\label{fig:SingularDiag}
\end{figure}

Motivated by the local disk-counting techniques used by Ozsv{\'a}th--Szab{\'o} to define their bimodules, we define a bimodule $\X^{DA}$ for a singular crossing between two strands. More specifically, $\X^{DA}$ is a type of $\A_{\infty}$ bimodule known as a $DA$ bimodule in bordered Floer homology. The right $\A_{\infty}$ actions on $\X^{DA}$ are quite elaborate, with nonzero $m_3$, $m_4$, and $m_5$ terms appearing. Our first result is that $\X^{DA}$ satisfies the appropriate structure relation.

\begin{theorem}\label{thm:IntroDAWellDefined}
The $DA$ bimodule $\X^{DA}$ shown graphically in Figures~\ref{fig:UnsimplifiedDABimod12} and \ref{fig:UnsimplifiedDABimod3} is a valid $DA$ bimodule.
\end{theorem}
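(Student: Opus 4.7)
The plan is to verify the $DA$ structure equation directly from the graphical presentation of $\X^{DA}$. Writing the structure operations as $\delta^1_{1+k}\co \X \otimes \B(n)^{\otimes k} \to \B(n) \otimes \X$, validity amounts to the relation
\[
d \circ \delta^1_{1+k} + \sum_{i+j=k} \mu_2 \circ (\id \otimes \delta^1_{1+j}) \circ (\delta^1_{1+i} \otimes \id^{\otimes j}) + \sum_{i=1}^{k-1} \delta^1_k \circ (\id_\X \otimes \id^{\otimes(i-1)} \otimes \mu_2 \otimes \id^{\otimes(k-i-1)}) = 0
\]
holding for every $k \ge 0$, every generator $x \in \X$, and every admissible tuple of $\B(n)$-inputs. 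Since $\X^{DA}$ has nonzero right $\A_\infty$-actions only up through $m_5$, the equation is nontrivial only for $k \le 6$.

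I would verify each value of $k$ from $0$ to $6$ in turn: the case $k=0$ asserts that $\delta^1_1$ is a chain map for the differential on $\B(n) \otimes \X$; $k=1$ is bimodule compatibility with the algebra multiplication; and $k=2,3,4,5,6$ encode the higher $\A_\infty$ relations. For each fixed $k$, the finitely many contributions to the left-hand side can be enumerated for each generator and each input tuple, and then paired as cancelling terms. Three structural features make the bookkeeping tractable: the idempotent decomposition of $\B(n)$, which forces matching idempotents and eliminates most formal compositions outright; the weight grading preserved by every operation in $\X^{DA}$, which restricts each component equation to a single graded piece; and the strand-swap symmetry of the singular crossing, which roughly halves the number of independent cases.

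The main obstacle will be the $k = 4,5,6$ equations, in which $m_3$, $m_4$, and $m_5$ can all participate in the compositions. The internal-multiplication contributions use the moving-generator relations in Ozsv{\'a}th--Szab{\'o}'s algebra, which typically expand $\mu_2(a_i, a_{i+1})$ into several summands; each summand must be matched with a nested $\delta^1_{1+i} \circ \delta^1_{1+j}$ contribution, and these nested compositions in turn chain together $\delta$-operations of differing arities. My approach is to catalog, for each fixed generator $x$ and each composition shape, the exhaustive list of nonzero contributions and verify termwise cancellation, leaning on the idempotent, grading, and symmetry constraints above to cut down the case count. The verification is then lengthy but mechanical.
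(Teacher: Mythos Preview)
Your approach is essentially the same as the paper's: a direct case-by-case verification of the $DA$ structure relation, organized by the number of algebra inputs. Two specific points in your plan are incorrect, though neither is fatal to the strategy.

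First, the bound $k \le 6$ is wrong. In your notation $\delta^1_{1+k}$ takes $k$ algebra inputs, so $m_5 = \delta^1_5$ corresponds to $k=4$. The composition term $\delta^1_{1+j}\circ\delta^1_{1+i}$ with $i+j=k$ can therefore be nonzero up through $k=8$ (two $\delta^1_5$ arrows composed). The paper's proof does check all the way up to this level; it turns out that for $k=6,7,8$ there are no composable pairs of the relevant arrows and no factorizable inputs, so the sums vanish trivially, but you must still observe this rather than truncate at $k=6$ a priori.

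Second, the ``strand-swap symmetry'' you invoke to halve the case count does not hold for the unsimplified bimodule $\X^{DA}$. The paper shows that $\X^{DA}$ admits only the composite symmetry $\Rc o$, not $\Rc$ alone, and even $\Rc o$ holds on the $(\B(2,2),\B(2,2))$ summand only after modifying the $\delta^1_3$ and $\delta^1_5$ input patterns (see the discussion around Remark~\ref{rem:OSymmetry}). The paper accordingly does not use symmetry in its proof at all; it simply enumerates every composable pair of arrows (the $S_1$ terms) and every nontrivial factorization of an algebra input (the $S_3$ terms) for each summand and each input length, and checks that they cancel. Since $\B(2)$ has zero differential, the $S_2$ and $S_4$ contributions vanish, which is why your displayed relation omits them. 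With these two corrections your plan matches the paper's argument.
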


Theorem~\ref{thm:IntroDAWellDefined} appears to be very restrictive; much of the complicated structure of $\X^{DA}$ is forced by well-definedness and a few simple holomorphic disk counts.

Unlike Ozsv{\'a}th--Szab{\'o}'s bimodules for nonsingular crossings, the Heegaard diagram used in this paper cannot quite produce a minimal categorification in the sense mentioned above. One reason is given by the empty rectangle in Figure~\ref{fig:SingularDiag}; algebraically, a few generators in $\X^{DA}$ can be cancelled to produce a simplified bimodule $\Xt^{DA}$. Our next result describes the result of this simplification.

\begin{theorem}
The $DA$ bimodule $\Xt^{DA}$ shown graphically in Figures~\ref{fig:SimplifiedDABimod12} and \ref{fig:SimplifiedDABimod3} is a valid $DA$ bimodule and is homotopy equivalent to $\X^{DA}$.
\end{theorem}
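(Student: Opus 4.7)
The plan is to obtain $\Xt^{DA}$ from $\X^{DA}$ via the standard cancellation (homological perturbation) lemma for $DA$ bimodules, which will simultaneously establish the well-definedness of $\Xt^{DA}$ and its homotopy equivalence to $\X^{DA}$. The key observation is that the text has already flagged the source of the simplification: the empty rectangle in Figure~\ref{fig:SingularDiag} produces a $\delta^1_1$-contribution whose algebra output is an idempotent, i.e.\ an ``invertible'' internal differential between two Kauffman-state generators.

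First I would locate this pair of generators $x,y\in\X^{DA}$ and verify that they form an acyclic two-generator subcomplex $C\subseteq \X^{DA}$, with $\delta^1_1(x)=y$ (up to idempotent) and no other $\delta^1_{1+k}$-contributions responsible for the invertibility. Then I would invoke the standard cancellation lemma for $DA$ bimodules, as used throughout Ozsv{\'a}th--Szab{\'o}'s bordered treatment of knot Floer homology: given such an acyclic pair, one obtains a smaller $DA$ bimodule on the quotient $\X^{DA}/C$ whose higher actions $\delta^1_{1+k}$ are computed by summing contributions from all ``zigzag'' sequences alternating $\X^{DA}$-actions with the cancellation edge $x\to y$, and this quotient is automatically a valid $DA$ bimodule canonically homotopy equivalent to $\X^{DA}$. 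Combined with Theorem~\ref{thm:IntroDAWellDefined}, both parts of the statement then follow formally, provided one can identify the output of the cancellation procedure with the explicit bimodule $\Xt^{DA}$ depicted in Figures~\ref{fig:SimplifiedDABimod12} and~\ref{fig:SimplifiedDABimod3}.

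The remaining and main task is thus the identification itself, and the principal obstacle is combinatorial rather than conceptual. Because $\X^{DA}$ already carries nontrivial $m_3$, $m_4$, and $m_5$ actions, zigzags through the cancelled pair $(x,y)$ can splice together multiple higher actions and produce correspondingly long actions in the quotient. I would proceed by fixing one target generator at a time and enumerating the zigzags ending there, using the graphical form of $\X^{DA}$ in Figures~\ref{fig:UnsimplifiedDABimod12} and~\ref{fig:UnsimplifiedDABimod3} to organize the enumeration; the resulting algebra outputs would then be compared to the arrows appearing in Figures~\ref{fig:SimplifiedDABimod12} and~\ref{fig:SimplifiedDABimod3}. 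Although potentially tedious, this is a finite mechanical check, and once it is complete the homotopy equivalence $\X^{DA}\simeq \Xt^{DA}$ is supplied automatically by the cancellation lemma.
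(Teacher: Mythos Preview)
Your approach is essentially the same as the paper's: identify cancellable pairs in $\X^{DA}$ (i.e.\ $\delta^1_1$-arrows with idempotent output), apply the homological perturbation / cancellation lemma for $DA$ bimodules (Section~\ref{sec:HowToSimplify}, citing \cite[Lemma 2.12]{OSzNew}), and verify that the resulting zig-zag formulas reproduce Figures~\ref{fig:SimplifiedDABimod12} and~\ref{fig:SimplifiedDABimod3}. One correction: there is not a single cancellable pair but several---one in the $(\B(2,0),\B(2,0))$ summand, two in the $(\B(2,1),\B(2,1))$ summand (namely $(8)\to(4)$ and $(9)\to(5)$), and one in the $(\B(2,2),\B(2,2))$ summand---so the cancellation lemma must be applied once per pair (or equivalently once per summand with the appropriate number of pairs), but this does not affect the logic of your argument.
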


We also define a bimodule $\X_i^{DD}$ for a singular crossing between strands $i$ and $i+1$ out of $n$ strands. This bimodule is of a simpler type, called a $DD$ bimodule (it has a differential but no higher $\A_{\infty}$ actions). While the lack of higher $\A_{\infty}$ actions makes $DD$ bimodules easier to work with, the compatibility between internal and external components of the differential on $\X_i^{DD}$ is intricate.

\begin{theorem}
The $DA$ bimodule $\X_i^{DD}$ constructed in Section~\ref{sec:GlobalDD} is a valid $DD$ bimodule.
\end{theorem}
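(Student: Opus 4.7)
The plan is to verify directly that the structure map $\delta^1$ on $\X_i^{DD}$ satisfies the $DD$ bimodule structure equation, which (since the algebras $\B(n)$ carry no differential) takes the schematic form $(\mu \otimes \id \otimes \id + \id \otimes \id \otimes \mu) \circ (\id \otimes \delta^1) \circ \delta^1 = 0$, with $\mu$ denoting multiplication in $\B(n)$. This equation must hold on each generator of $\X_i^{DD}$, and once both sides are expanded the contributions split naturally into three families: terms where both iterated applications of $\delta^1$ contribute purely internal differential components, terms where both contribute external algebra components (on the left, on the right, or one on each side), and the mixed terms where one application is internal and the other external.

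After setting up notation, first I would enumerate the generators of $\X_i^{DD}$. Since $\X_i^{DD}$ is the global analogue of a local construction, each generator records a partial Kauffman state at the singular crossing between strands $i$ and $i+1$ together with choices for the other strands that dictate the idempotent in $\B(n)$ on each side. I would then catalogue the components of $\delta^1$ on each generator type, separating internal components (those preserving the idempotent pair) from external components (those outputting a non-idempotent algebra element on at least one side). With both lists in hand, the structure equation becomes a finite but substantial case check against the multiplication table in $\B(n)$.

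The main obstacle, as the statement hints, will be the mixed internal-external compatibility. Because a singular crossing produces several external outputs attached to distinct internal components, cancellation on a given generator typically requires that the algebra element obtained by following an external output with a subsequent internal one equal, modulo further external outputs, the algebra element obtained in the opposite order from a different generator. Keeping track of which pairs of paths through the bimodule diagram cancel each other, especially when the algebra outputs involve horizontal strands far from strands $i$ and $i+1$, is where I expect most of the effort to lie; the structure of $\B(n)$ as a ``horizontal strands'' algebra should make most such terms inert away from the support of the crossing, localizing the nontrivial checks.

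An alternative route, which I would pursue if the direct check becomes unwieldy, is to exploit Theorem~\ref{thm:IntroDAWellDefined}. If $\X_i^{DD}$ can be identified, up to the appropriate global-local interface, with the box tensor product of $\X^{DA}$ with the canonical type $DD$ bimodule of $\B(n)$ (extended over the inert strands), then validity of $\X_i^{DD}$ follows formally from validity of $\X^{DA}$ together with validity of the canonical $DD$ bimodule, reducing the intricate cross-term compatibility to standard bordered-Floer machinery and to a comparison of generator sets.
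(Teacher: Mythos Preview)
Your overall plan---split $\delta^1$ into pieces, verify each piece squares to zero, and then check the cross terms---is exactly the structure of the paper's proof in Section~\ref{sec:GlobalDDWellDefined}, where $\delta^1$ is decomposed as $\delta^1_{\local} + \delta^1_{\external}$ and the three Propositions~\ref{prop:HalfLocalDD}, \ref{prop:DDRelsForExternal}, and \ref{prop:InternalExternalCompatibility} handle the local--local, external--external, and mixed terms respectively. So the architecture is right.

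However, there is a genuine gap in your setup. You assert that the $DD$ relation reduces to a quadratic identity in $\delta^1$ ``since the algebras $\B(n)$ carry no differential.'' This is false: $\X_i^{DD}$ is a $DD$ bimodule over $(\B(n), \B^!(n))$, and $\B^!(n)$ carries the differential $\partial(C_j) = U_j$. The correct relation is
\[
(\partial_{\B} \otimes \id) \circ \delta^1 + (\mu_{\B} \otimes \id) \circ (\id \otimes \delta^1) \circ \delta^1 = 0,
\]
and the $\partial_{\B}$ term is essential. In particular, the external--external piece does \emph{not} square to zero on its own: the ``unmoving'' component $\delta^1_{\ext,\unmoving}$ multiplies by $U_j \otimes C_j$, whose differential is $U_j \otimes U_j$, and this is precisely what cancels the surviving term from applying the moving component twice (Lemmas~\ref{lem:ExternalPartialOfDelta} and \ref{lem:SameExternalStrandDelta}). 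Without the algebra differential you would find an uncanceled $(U_j \otimes U_j) \otimes m$ for every external strand $j$, and your check would fail. Relatedly, your proposed internal/external dichotomy (``preserving the idempotent pair'' versus ``outputting a non-idempotent element'') is not the one the paper uses; the local piece $\delta^1_{\local}$ outputs plenty of non-idempotent elements such as $R_i \otimes L_i L_{i+1}$. The correct split is by which strands are involved: local means strands $i$ and $i+1$, external means all others.

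Your alternative route via a box tensor product with $\X^{DA}$ is not available as stated: the paper only establishes $\X^{DD} \cong \X^{DA} \boxtimes \K$ in the local $n=2$ case, and explicitly flags the construction of global $DA$ bimodules $\X_i^{DA}$ as an open problem in the ``Further directions'' section. So you cannot import validity of $\X_i^{DD}$ from Theorem~\ref{thm:IntroDAWellDefined} without doing the work of building $\X_i^{DA}$ first.
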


As described below in Section~\ref{sec:CanonicalDD}, there is a natural procedure $- \boxtimes \K$ for obtaining $DD$ bimodules from $DA$ bimodules over the algebras in question. We show that the bimodule $\X^{DD} = \X_1^{DD}$ for a singular crossing between two strands satisfies $\X^{DD} \cong \X^{DA} \boxtimes \K$. We also show that $\Xt^{DA}$ and the simplification $\Xt^{DD}$ of $\X^{DD}$ admit symmetries corresponding to Ozsv{\'a}th--Szab{\'o}'s symmetries $\Rc$ and $o$ on their algebras and bimodules (the unsimplified bimodules $\X^{DA}$ and $\X^{DD}$ only admit a symmetry corresponding to the composition $\Rc o$).

\subsection*{Further directions}

A natural question is whether the bimodules $\X^{DA}$ or $\X_i^{DD}$ extend to invariants of more general singular braids, possibly with many crossings (positive, negative, and singular), and of singular tangles. The bimodules $\X_i^{DD}$ are not well-adapted to building such an extension; $DA$ bimodules would be desired. One could try to define a $DA$ bimodule $\X_i^{DA}$ from $\X_i^{DD}$ by taking a tensor product with the quasi-inverse of the bimodule $\K$ considered below in Section~\ref{sec:CanonicalDD}, but the resulting bimodule would be infinitely generated and hard to manipulate unless an explicit low-rank model for this quasi-inverse could be found.

Abstractly, global extensions of $DA$ bimodules (such as from $\X^{DA}$ to $\X_i^{DA}$) should be covered by higher representation theory via its connection with cornered (or twice-extended) Heegaard Floer homology \cite{DM, DLM}. This connection will be explored by Rouquier and the author in \cite{ManionRouquier}. However, even without a general theory of global extensions, it would be interesting to define $DA$ bimodules $\X_i^{DA}$, extend to more general singular tangles by taking tensor products, and prove invariance.

Assuming that bimodules $\X_i^{DA}$ as described above exist and give singular tangle invariants, the next question would be whether these invariants recover a known Heegaard Floer invariant when restricted to (closed) singular knots. Since the bimodules constructed in this paper do not count holomorphic disks through $O$ basepoints, one expects them to recover the invariant $\widetilde{\HFS}$. One would like to prove this identification by establishing a more local identification between $\X_i^{DA}$ and certain generalized bordered Floer bimodules, defined analytically from Heegaard diagrams. 

In fact, Ozsv{\'a}th--Szab{\'o} plan to define such analytic bimodules for Heegaard diagrams satisfying certain conditions in \cite{OSzHolo}.
\begin{conjecture}\label{conj:AnalyticBimods}
The bimodules $\X^{DA}$ and $\X^{DD}$ constructed in this paper are isomorphic to $DA$ and $DD$ bimodules defined analytically from the Heegaard diagram in Figure~\ref{fig:StabilizedSingularDiag} below using techniques such as those in \cite{OSzHolo}, given certain analytic choices. The bimodule $\X_i^{DD}$ is isomorphic to the $DD$ bimodule of a ``globally extended'' (to the left and right) variant of the diagram, again for some analytic choices.
\end{conjecture}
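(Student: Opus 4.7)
The plan is to address the three assertions in order of increasing difficulty: first the local $DD$ statement, then transfer to the $DA$ statement using the equivalence $- \boxtimes \K$, and finally the global extension.

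For the local $DD$ bimodule, I would begin by fixing a generic admissible complex structure on the bordered sutured Heegaard diagram of Figure~\ref{fig:StabilizedSingularDiag} and identifying its analytic generators --- partial Kauffman states in the sense of \cite{AltKnots, OSSz}, refined with corner data as in \cite{Zarev} --- with the algebraic generators of $\X^{DD}$. The small, local nature of the diagram keeps this bijection tight: each region of the diagram supports only a few candidate generators, and the matching with idempotents on both algebra actions is essentially forced. I would then enumerate the low-area embedded holomorphic curves (bigons and rectangles in the planar parts, together with the stabilized curves crossing the handle) and verify that their signed counts reproduce the internal and external components of the differential on $\X^{DD}$. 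Because $DD$ bimodules carry no higher $\A_\infty$ operations, this stage reduces to a finite, essentially combinatorial verification.

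For the $DA$ assertion, my plan is to deduce the isomorphism from the $DD$ case using the identity $\X^{DD} \cong \X^{DA} \boxtimes \K$ proved earlier in the paper. Provided the analytically defined bimodules satisfy the analogous relation --- which one expects from \cite{OSzHolo} since $\K$ represents the identity cobordism in a suitable bordered sense --- the $DD$ isomorphism transfers to a $DA$ isomorphism after pairing with a quasi-inverse of $\K$. Even without an explicit low-rank model for such a quasi-inverse, one can compare the two $DA$ bimodules directly by exploiting the rigidity noted after Theorem~\ref{thm:IntroDAWellDefined}: once generators, the $m_2$ actions, and a handful of low-degree higher operations are matched against disk counts, the remaining $m_3$, $m_4$, $m_5$ terms are forced by the $DA$ structure relations.

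The globally extended $DD$ statement should follow by applying the local argument in each elementary strip of the extended diagram and then glueing. I would identify generators of the extended analytic bimodule with tuples of local Kauffman states, verify the matching of local disk counts as above, and check that the only new contributions arise from domains straddling the singular tile --- precisely the contributions captured by the external part of the differential on $\X_i^{DD}$.

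I expect the principal obstacle to be analytic rather than combinatorial: adapting the techniques of \cite{OSzHolo} to the bordered sutured setting with corners, establishing simultaneous transversality for all relevant moduli spaces, and controlling boundary degenerations so that no unwanted curves contribute. A secondary difficulty lies in matching the higher $\A_\infty$ operations in the $DA$ case: even granted rigidity, one must count moduli of higher-dimensional disks whose combinatorics can be subtle near the handle stabilization, and aligning them with the $m_3, m_4, m_5$ operations in $\X^{DA}$ may require delicate bookkeeping of asymptotic data along the bordered boundary.
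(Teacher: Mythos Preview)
The statement you are attempting to prove is labeled as a \emph{conjecture} in the paper, and the paper does not provide a proof of it. Indeed, the paper explicitly says ``If Conjecture~\ref{conj:AnalyticBimods} is true, then this paper can be viewed as an explicit computation of the generalized bordered invariant for certain Heegaard diagrams,'' and the surrounding discussion (under ``Further directions'') treats it as an open problem whose resolution depends on analytic machinery from \cite{OSzHolo} that had not yet appeared. There is therefore no proof in the paper to compare your proposal against.

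What you have written is a reasonable \emph{research plan} rather than a proof, and you correctly identify the main obstacle: the analytic bimodules you would need to compare with have not been constructed in the generality required (bordered sutured diagrams with closed circles in the bordered boundary; see the remark following Definition~\ref{def:XDD} and the discussion of \cite{MMW2}). Your strategy of first matching generators and low-area disk counts for the $DD$ bimodule, then transferring to the $DA$ case via $-\boxtimes \K$, and finally handling the global extension strip by strip, is a plausible outline. But each step presupposes that the analytic invariants exist and satisfy the expected functoriality and pairing properties, and establishing those is precisely the content of the conjecture. In particular, your appeal to ``the analogous relation'' for analytically defined bimodules and to ``rigidity'' forcing the higher $\A_\infty$ operations would itself require substantial new work, not just bookkeeping.
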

If Conjecture~\ref{conj:AnalyticBimods} is true, then this paper can be viewed as an explicit computation of the generalized bordered invariant for certain Heegaard diagrams representing singular crossings. We also note that based partially on evidence from \cite{ManionRouquier}, we believe that the proper topological interpretation of this generalization is as a generalization of bordered sutured Floer homology, not of bordered Floer homology as formulated e.g. in \cite{LOTBorderedOrig}; see \cite{MMW2} as well for a relationship between Ozsv{\'a}th--Szab{\'o}'s algebras and certain generalized bordered sutured algebras.

It would be interesting to compare the bimodules in this paper with Alishahi--Dowlin's bimodules for singular crossings in \cite{AlishahiDowlin}. On the surface, the bimodules look very dissimilar; for example, Alishahi--Dowlin's bimodules do not have any higher $\A_{\infty}$ actions, but they are not defined over Kozsul dual algebras like our $DD$ bimodules. However, it is possible that higher $\A_{\infty}$ actions arise when simplifying Alishahi--Dowlin's bimodules using homological perturbation theory.

\subsection*{Organization}

In Section~\ref{sec:BorderedAlg}, we review the relevant algebraic background from bordered Floer homology. In Section~\ref{sec:OSzAlgs}, we review the algebras over which our bimodules are defined. In Section~\ref{sec:LocalDD}, we describe the local $DD$ bimodule $\X^{DD}$; this is the simplest of the bimodules appearing in this paper, although we postpone verifying the $DD$ structure relation because we can deduce it from the $DA$ structure relation for $\X^{DA}$. We also describe the simplified bimodule $\Xt^{DD}$ and its symmetries.

In Section~\ref{sec:LocalDA}, we define $\X^{DA}$ and prove that it is a well-defined $DA$ bimodule. We also compute the result $\Xt^{DA}$ of simplifying $\X^{DA}$, and we describe the symmetries of $\X^{DA}$ and $\Xt^{DA}$. Finally, in Section~\ref{sec:GlobalDD}, we define the globally extended $DD$ bimodule $\X_i^{DD}$, and in Section~\ref{sec:GlobalDDWellDefined} we prove its well-definedness.

\subsection*{Acknowledgments}
I would like to thank Akram Alishahi, Nathan Dowlin, Aaron Lauda, Robert Lipshitz, Peter Ozsv{\'a}th, and Rapha{\"e}l Rouquier for useful discussions. I would especially like to thank Zolt{\'a}n Szab{\'o} for teaching me about the Kauffman-states functor. 

\section{Bordered algebra}\label{sec:BorderedAlg}

\subsection{Differential graded algebras}
Let $\F := \F_2$, and let $\I = \F^{\times N}$ be a direct product of finitely many copies of $\F$. We define an $\I$-\emph{algebra} to be a ring $\A$ (with unit) equipped with a ring homomorphism $\I \to \A$. In other words, we consider algebras in $(\I,\I)$-bimodules, or equivalently $\F$-linear categories with $N$ objects. We will sometimes refer to $\I$ as the \emph{idempotent ring} of $\A$.

A \emph{differential ring} is a ring equipped with an abelian-group endomorphism $\partial$ satisfying $\partial^2=0$ and the Leibniz rule $\partial(ab) = \partial(a)b + a \partial(b)$. A \emph{differential} $\I$-\emph{algebra} is a differential ring $\A$ equipped with a homomorphism of differential rings from $\I$ to $\A$, where $\I$ has zero differential.

\begin{definition}
Let $G$ be a group and let $\lambda$ be a central element of $G$. A \emph{differential} $(G,\lambda)$--\emph{graded ring} (or \emph{dg ring}) is a differential ring $\A$ equipped with a decomposition $\A = \oplus_{g \in G} \A_g$ of abelian groups such that $1 \in \A_e$, $\partial(\A_g) \subset \A_{\lambda g}$, and $\mu(\A_g \otimes \A_{g'}) \subset \A_{gg'}$, where $e$ is the identity element of $G$ and $\partial$ and $\mu$ denote the differential and multiplication of $\A$. A \emph{differential} $(G,\lambda)$--\emph{graded algebra} (or \emph{dg algebra}) over $\I$ is a dg ring equipped with a homomorphism of dg rings from $\I$ to $\A$, where $\I$ is concentrated in degree $e$ and has zero differential.
\end{definition}

\begin{remark}
In this paper, $G$ will always be $\Z \oplus M$ where $M$ is a finitely generated free abelian group. We will refer to $M$ as the intrinsic grading group or the Alexander multi-grading group. The $\Z$--component of the $G$--degree of an algebra element will be called its \emph{homological degree} or \emph{Maslov degree}, and the $M$--component will be called its \emph{intrinsic degree} or \emph{Alexander multi-degree}. The element $\lambda$ of $G$ will be $(1,0)$. Bordered Floer homology considers algebras with more general gradings by $\Z$ central extensions of finitely generated free abelian groups.
\end{remark}

\begin{remark}\label{rem:HomGr}
Our conventions contrast with the usual conventions in bordered Heegaard Floer homology (see \cite[Definition 2.5.2]{LOTBimodules}), in which differentials have degree $\lambda^{-1}$.  
\end{remark}

A $(G,\lambda)$-graded dg algebra $\A$ over $\I$ may be viewed as an $\F$-linear $(G,\lambda)$-graded dg category $\Cat_{\A}$ whose objects are the $N$ primitive idempotents $\Ib \in \I$, with $\Hom_{\Cat_{\A}}(\Ib,\Ib') = \Ib' \cdot \A \cdot \Ib$.

\begin{definition}
If $\A$ is a dg algebra over $\I$, an \emph{augmentation} of $\A$ is a dg algebra homomorphism $\varepsilon\co \A \to \I$ with $\varepsilon(\Ib) = \Ib$ for all $i \in \I$. If $\A$ is an augmented dg algebra with augmentation $\varepsilon$, let $\A_+ = \ker(\varepsilon)$.
\end{definition}
We will not need to use Keller's slightly more general definition from \cite[Section 10.2]{KellerDerivingDG}.

\subsection{\texorpdfstring{$DD$}{DD} bimodules}

\begin{convention}
Tensor products $\otimes$ are over $\F$ unless otherwise specified.
\end{convention}

Let $\A$ and $\A'$ be dg algebras over $\I$ and $\I'$ respectively, with gradings by $(G,\lambda)$ and $(G',\lambda')$. Define $G \times_{\lambda} G' = \frac{G \times G'}{\lambda = \lambda'}$; this group has a distinguished central element $[\lambda] = [\lambda']$.  We can view $\A \otimes \A'$ as a $G \times_{\lambda} G'$--graded dg algebra over $\I \otimes \I' \cong \F^{\times N} \otimes \F^{\times N'} \cong \F^{\times (NN')}$. 

Let $S$ be a left $G \times_{\lambda} G'$--set, or equivalently a set with commuting left actions of $G$ and $G'$ such that the actions of $\lambda$ and $\lambda'$ agree. 

\begin{definition}
A left module $X$ over $\I \otimes \I'$ is called $S$-\emph{graded} if $X$ is equipped with a decomposition $X \cong \oplus_{s \in S} X_s$ of left $\I \otimes \I'$-modules. We define $X[1]$ by $(X[1])_s = X_{\lambda s}$; in other words, $X[1]$ is $X$ with its degrees shifted downward by $\lambda$.
\end{definition}

If $X$ is an $S$-graded left module over $\I \otimes \I'$ and $s \in S$, we can view $(\A \otimes \A') \otimes_{\I \otimes \I'} X$ as an $S$-graded left $\I \otimes \I'$-module by
\begin{equation}\label{eq:ExtendingSGradings}
((\A \otimes \A') \otimes_{\I \otimes \I'} X)_s := \bigoplus_{g \in G \times_{\lambda} G', s' \in S: gs' = s} (\A \otimes \A')_g \otimes_{\I \otimes \I'} X_{s'}.
\end{equation}

\begin{definition}[cf. Definition 2.2.55 of \cite{LOTBimodules}]
An $S$--graded (left, left) $DD$ \emph{bimodule} over $(\A,\A')$ is a pair $(X,\delta^1)$ where $X$ is an $S$-graded left module over $\I \otimes \I'$ and
\[
\delta^1\co X \to (\A \otimes \A') \otimes_{\I \otimes \I'} X[1]
\]
is an $\I \otimes \I'$-linear map that preserves $S$--degrees and satisfies the $DD$ bimodule relation
\[
(\partial \otimes \id_X) \circ \delta^1 + (\mu \circ \id_X) \circ (\id_{\A \otimes \A'} \otimes \delta^1) \circ \delta^1 = 0.
\]
Here $\partial$ and $\mu$ denote the differential and multiplication on the tensor product algebra $\A \otimes \A'$.
\end{definition}

We say that $X$ is \emph{finitely generated} if it is finite-dimensional over $\F$; all $DD$ bimodules we consider are finitely generated. Following \cite{LOTBimodules}, we will sometimes write $X = {^{\A,\A'}X}$ when we want to include the algebras $\A,\A'$ in the notation for $X$.

\begin{remark}\label{rem:DDBimodGradings}
In this paper, with $G = \Z \oplus M$ and $G' = \Z \oplus M'$, we have $G \times_{\lambda} G' \cong \Z \oplus M \oplus M'$. The left $G \times_{\lambda} G'$--sets we will consider always have the form $S = \Z \oplus \overline{M}$ where $\overline{M}$ is another finitely generated free abelian group and the $G \times_{\lambda} G'$ action on $S$ comes from homomorphisms from $M$ and $M'$ to $\overline{M}$.
\end{remark}

\begin{definition}\label{def:IdemTerminology}
Let $X$ be a $DD$ bimodule and let $x \in X$. We say that $x$ has a \emph{unique pair of idempotents} if there exist unique primitive idempotents $\Ib$ and $\Ib'$ of $\I$ and $\I'$ with $(\Ib \otimes \Ib') \cdot x \neq 0$.
\end{definition}

By choosing a basis over $\F$ for $((\Ib \otimes \Ib') \cdot X)_s$ for each $s \in S$ and pair of primitive idempotents $(\Ib,\Ib')$, we can choose an $\F$--basis for $X$ such that each basis element is homogeneous and has a unique pair of idempotents. If $x$ has a unique pair of idempotents $(\Ib,\Ib')$, we will call $\Ib$ and $\Ib'$ the first and second idempotents of $x$ respectively. 

\begin{remark} In bordered Heegaard Floer homology, $DD$ bimodules often arise from certain Heegaard diagrams. Such a diagram gives not just a $DD$ bimodule but also a natural choice of basis satisfying the above properties, given by certain sets of intersection points between curves in the diagram. While the $DD$ operation $\delta^1$ may depend on analytic choices, the basis determined by the diagram does not. See Figure~\ref{fig:GensAsIntPts} for an example. The same is true for other types of bimodules, such as the $DA$ bimodules discussed below.
\end{remark}

\subsection{Homotopy equivalences of \texorpdfstring{$DD$}{DD} bimodules}

\begin{definition}[cf. Definition 2.2.55 of \cite{LOTBimodules}]
Let $X$ and $Y$ be $S$--graded $DD$ bimodules over $(\A,\A')$. A $DD$ \emph{bimodule morphism} $f\co X \to Y$ is an $\I \otimes \I'$-linear map
\[
f\co X \to (\A \otimes \A') \otimes_{\I \otimes \I'} Y,
\]
not necessarily grading-preserving. We say that $f$ has degree $k$ if $f$ maps $X_s$ into $((\A \otimes \A') \otimes_{\I \otimes \I'} Y)_{\lambda^k s}$ for all $s \in S$. Note that while the degree of a morphism may not be unique, the notion of having degree $k$ for a fixed $k$ is unambiguous. 

The $DD$ morphisms from $X$ to $Y$ of degree $k$, for all $k \in \Z$, form a $\Z$-graded chain complex with differential
\begin{alignat*}{2}
&\partial f:= \qquad &&(\partial \otimes \id) \circ f \\
&&&+ (\mu \otimes \id) \circ (\id \otimes \delta^1) \circ f \\
&&&+ (\mu \otimes \id) \circ (\id \otimes f) \circ \delta^1,
\end{alignat*}
where $\delta^1$ denotes the $DD$ bimodule operation on $X$ or $Y$ as appropriate.
\end{definition}

Let $f\co X \to Y$ and $g\co Y \to Z$ be $DD$ bimodule morphisms. We define the composition $g \circ f$ to be the morphism from $X \to Z$ given by the map $(\mu \otimes \id) \circ (\id_{\A \otimes \A'} \otimes g) \circ f$. With this composition, we can form $\Z$--graded dg categories of $S$--graded $DD$ bimodules and $DD$ morphisms. Homotopy equivalence of $DD$ bimodules is defined using these dg categories.

\subsection{\texorpdfstring{$DA$}{DA} bimodules}

Let $\A$ and $\A'$ be dg algebras over $\I$ and $\I'$ with gradings by $(G,\lambda)$ and $(G',\lambda')$. Let $G'^{\op}$ denote $G'$ with its order of multiplication reversed; a left $G \times_{\lambda} (G'^{\op})$--set is equivalently a set with commuting left and right actions of $G$ and $G'$ respectively such that the actions of $\lambda$ and $\lambda'$ agree. 

Let $S$ be such a set and let $X$ be an $S$--graded (left, right) bimodule over $(\I,\I')$. We can view both $\A \otimes_{\I} X$ and $X \otimes_{\I'} \A'^{\otimes(i-1)}$ (for $i \geq 1$) as $S$--graded $(\I,\I')$--bimodules as in equation~\eqref{eq:ExtendingSGradings}.

\begin{convention}
When discussing $DA$ bimodules, all tensor products in symbols like $\A'^{\otimes(i-1)}$ are over the idempotent ring $\I'$ of $\A'$.
\end{convention}

\begin{definition}[cf. Definition 2.2.43 of \cite{LOTBimodules}]
A $DA$ \emph{bimodule} over $(\A,\A')$ is an $S$--graded (left, right) bimodule $X$ over $(\I,\I')$ equipped with, for $i \geq 1$, an $(\I,\I')$-bilinear degree-preserving map
\[
\delta^1_i\co X \otimes_{\I'} (\A')^{\otimes (i-1)} \to \A \otimes_{\I} X[2-i]
\]
satisfying the $DA$ structure relations
\begin{alignat*}{2}
&(\partial \otimes \id) \circ \delta^1_i &&+ \sum_{j=1}^i (\mu \otimes \id) \circ (\id \otimes \delta^1_{i-j+1}) \circ (\delta^1_j \otimes \id) \\
&&&+ \sum_{j=1}^{i-1} \delta^1_i \circ (\id \otimes \partial'_j) \\ 
&&&+ \sum_{j=1}^{i-2} \delta^1_{i-1} \circ (\id \otimes \mu'_{j,j+1}) \\
&= 0
\end{alignat*}
for all $i \geq 1$. Here, $\partial'_j\co (\A')^{\otimes (i-1)} \to (\A')^{\otimes (i-1)}$ is the differential $\partial'$ of $\A'$ on the $j^{th}$ tensor factor and the identity on the rest. Similarly, $\mu'_{j,j+1}$ is the multiplication on tensor factors $j$ and $j+1$ of $(\A')^{\otimes(j-1)}$. Finally, $\mu$ and $\partial$ are the multiplication and differential on $\A$.

\end{definition}

To indicate the algebras $\A$ and $\A'$ in the notation for $X$, we will sometimes write $X = {^{\A}}X_{\A'}$, following \cite{LOTBimodules}.

A $DA$ bimodule $X$ is called \emph{strictly unital} if, for all $x \in X$, we have $\delta^1_2(x \otimes 1) = x$ and $\delta^1_i(x \otimes a'_1 \otimes \cdots \otimes a'_{i-1})$ is zero whenever $i > 2$ and any $a'_j$ is the identity element $1$ of $\A'$. We also call $X$ \emph{finitely generated} if $X$ is finite-dimensional over $\F$. All $DA$ bimodules we consider are strictly unital and finitely generated. 

As in Definition~\ref{def:IdemTerminology}, let $X$ be a $DA$ bimodule and let $x \in X$. We say that $x$ has a \emph{unique pair of idempotents} if there exist unique primitive idempotents $\Ib$ and $\Ib'$ of $\I$ and $\I'$ with $\Ib \cdot x \cdot \Ib' \neq 0$. We call $\Ib$ and $\Ib'$ the left and right idempotents of $x$ respectively.

\subsection{Homotopy equivalences of \texorpdfstring{$DA$}{DA} bimodules}

\begin{definition}[cf. Definition 2.2.43 of \cite{LOTBimodules}]
Let $X$ and $Y$ be $S$--graded $DA$ bimodules over $(\A,\A')$, and assume that we have an augmentation on $\A'$. A $DA$ \emph{bimodule morphism} $f\co X \to Y$ is a collection of maps
\[
f_i\co X \otimes_{\I'} (\A'_+)^{\otimes (i-1)} \to \A \otimes_{\I} X[1-i],
\]
not necessarily grading-preserving. We say that $f$ has degree $k$ if $f_i$ maps $(X \otimes_{\I'} (\A'_+)^{\otimes (i-1)})_s$ into $(\A \otimes_{\I} X[1-i])_{\lambda^k s}$ for all $i,s$. As with $DD$ bimodule morphisms, this notion is well-defined although the degree of a morphism may not be unique.

The $DA$ morphisms from $X$ to $Y$ of degree $k$, for all $k \in \Z$, form a $\Z$-graded chain complex with differential
\begin{alignat*}{2}
&(\partial f)_i := \qquad &&(\partial \otimes \id) \circ f_i \\
&&&+ \sum_{j=1}^i (\mu \otimes \id) \circ (\id \otimes \delta^1_{i-j+1}) \circ (f_j \otimes \id) \\
&&&+ \sum_{j=1}^i (\mu \otimes \id) \circ (\id \otimes f_{i-j+1}) \circ (\delta^1_j \otimes \id) \\
&&&+ \sum_{j=1}^{i-1} f_i \circ (\id \otimes \partial'_j) \\
&&&+ \sum_{j=1}^{i-2} f_{i-1} \circ (\id \otimes \mu'_{j,j+1}). \\
\end{alignat*}
\end{definition}

Let $f\co X \to Y$ and $g\co Y \to Z$ be $DA$ bimodule morphisms. The composition $g \circ f\co X \to Z$ is defined by
\[
(g \circ f)_i := \sum_{j=1}^i (\mu \otimes \id) \circ (\id \otimes g_{i-j+1}) \circ (f_j \otimes \id)
\]
With this composition, we can form $\Z$--graded dg categories of $S$--graded $DA$ bimodules and $DA$ morphisms. Homotopy equivalence of $DA$ bimodules is defined using these dg categories.

\subsection{Box tensor products}\label{sec:BoxTensor}

The language of bordered Floer homology includes a concrete model for the derived tensor product $\widetilde{\otimes}$ called the box tensor product $\boxtimes$. In this paper we will only discuss the box tensor product of a $DA$ bimodule with a $DD$ bimodule, and we can put a simplifying assumption on the grading set of the $DD$ bimodule.

Let $\A, \A'$, and $\A''$ be dg algebras over $\I$, $\I'$, and $\I''$ with gradings by $(G,\lambda), (G',\lambda')$, and $(G',\lambda')$ respectively. Let $X$ be a $DA$ bimodule over $(\A,\A')$, graded by a left $G \times_{\lambda} (G'^{\op})$-set $S$. Let $\K$ be a (left, left) $DD$ bimodule over $(\A',\A'')$, graded by $G'$ as a (left, left) $G' \times_{\lambda} G'$-set where both actions of $G'$ are given by left multiplication. For $j \geq 2$, define $\delta^j\co \K \to (\A' \otimes \A'')^{\otimes j} \otimes_{\I' \otimes \I''} \K[j]$ by $\delta^j := (\id^{\otimes(j-1)} \otimes \delta^1) \circ \cdots \circ \delta$.

\begin{definition}\label{DADDBoxTensorDef}
Assuming the sum in the expression for $\delta^{\boxtimes}$ below is finite, the \emph{box tensor product} $X \boxtimes \K$ is the $S$-graded (left, left) $DD$ bimodule over $(\A,\A'')$ which is defined to be
\[
(X \boxtimes \K)_s := \oplus_{s'g' = s} X_{s'} \otimes_{\I'} \K_{g'}
\]
as an $S$-graded left $\I \otimes \I''$ module and equipped with the $DD$ bimodule operation
\begin{align*}
\delta^{\boxtimes} := \sum_{j \geq 1} X \otimes \K &\xrightarrow{\id \otimes \delta^{j-1}} X \otimes (\A' \otimes \A'')^{\otimes(j-1)} \otimes \K \\
&\xrightarrow{\cong} X \otimes \A'^{\otimes(j-1)} \otimes \A''^{\otimes(j-1)} \otimes \K \\
&\xrightarrow{\delta^1_j \otimes \id \otimes \id} \A \otimes X \otimes \A''^{\otimes(j-1)} \otimes \K \\
&\xrightarrow{\textrm{swap and multiply } \A'' \textrm{ factors}} \A \otimes \A'' \otimes X \otimes \K.
\end{align*}
The sum is finite if the $DA$ bimodule operations $\delta^1_j$ on $X$ vanish for sufficiently large $j$; this condition holds for all $DA$ bimodules in this paper.

\end{definition}
If $\K$ is a (left, left) $DD$ bimodule over $(\A'',\A')$ instead of over $(\A',\A'')$, we can also define a $DD$ bimodule $X \boxtimes \K$ over $(\A,\A'')$, modifying Definition~\ref{DADDBoxTensorDef} appropriately. 

\subsection{Graphical depictions of \texorpdfstring{$DD$}{DD} bimodules and \texorpdfstring{$DA$}{DA} bimodules}\label{sec:Graphical}

Let $X$ be a finitely generated $DD$ bimodule; choose a basis for $X$ consisting of grading-homogeneous elements with unique pairs of idempotents. We can depict $X$ using a directed graph with labeled edges. Vertices of the graph are basis elements of $X$. There is an edge from $x$ to $y$ when $(\sum a \otimes a') \otimes y$ appears in the basis expansion of $\delta^1(x)$ for some nonzero element $\sum a \otimes a'$ of $\A \otimes \A'$; in this case, we label the edge $\sum a \otimes a'$. See Figure~\ref{fig:UnsimplifiedLocalBimod12} for an example of the graph of a $DD$ bimodule; there are also examples in \cite{OSzNew}.

In fact, it will often be useful to define $DD$ bimodules in terms of their directed graphs. Let $\Gamma$ be a directed graph with vertices labeled with names, degrees, and idempotents, and edges labeled by elements of $\A \otimes \A'$. We say $\Gamma$ has \emph{compatible grading and idempotent data} if:
\begin{itemize}
\item each vertex and algebra element in an edge label is homogeneous with a unique pair of idempotents, 
\item for every edge from $x$ to $y$ labeled by $\sum a \otimes a'$, the degree of each summand of $(\sum a \otimes a') \otimes y$ (multiplied by $\lambda^{-1}$) agrees with the degree of $x$, and
\item for every edge from $x$ to $y$ labeled by $\sum a \otimes a'$, the right idempotent of each element $a$ (respectively $a'$) is the first idempotent (respectively second idempotent) of $y$, and the left idempotent of $a$ (respectively $a'$) is the first idempotent (respectively second idempotent) of $x$.
\end{itemize}

If $\Gamma$ has compatible data, we have a $DD$ bimodule $X$ corresponding to $\Gamma$ when the following condition is satisfied: for any two vertices $x$ and $y$ of $G$, let $S_{1}$ be the set of composable pairs of two edges $x \xrightarrow{\sum a \otimes a'} z \xrightarrow{\sum b \otimes b'} y$, and let $S_{2}$ be the set of all single edges $x \xrightarrow{\sum c \otimes c'} y$ directly from $x$ to $y$. Then the sum over $S_{1}$ of the products $\sum \sum ab \otimes a'b'$ of the edge labels, plus the sum over $S_{2}$ of the derivatives $\sum \partial(c)\otimes c' + c\otimes \partial(c')$ of the edge labels, must be zero in $\A \otimes \A'$.

If $X$ is a finitely generated $DA$ bimodule, we can depict $X$ similarly. Besides choosing a basis for $X$, we also choose an $\F$-basis for $\A'$; both bases should consist of homogeneous elements with unique left and right idempotents, and the basis for $\A'$ should contain the primitive idempotents $\Ib' \in \I'$.

Define a directed graph whose vertices are basis elements of $X$ (with grading and idempotent data recorded explicitly or implicitly). There is an edge from $x$ to $y$ when $a \otimes y$ appears in the basis expansion of $\delta^1_i(x \otimes a'_1 \otimes \cdots \otimes a'_{i-1})$ for some element $a \neq 0$ of $\A$ and basis element $a'_1 \otimes \cdots \otimes a'_{i-1}$ of $(\A')^{\otimes(i-1)}$. Following \cite{OSzNew}, we label the edge from $x$ to $y$ with the formal sum of expressions $a \otimes (a'_1, \ldots, a'_{i-1})$ over all terms of $\delta^1(x)$ as above. It is often useful to view these formal sums, or sums in the algebra inputs $a'_j$, as a shorthand for multiple edges between the same vertices. When $i = 1$, we omit the parentheses (so the labels look like $a \otimes a'$), and when $i = 0$, we omit the $\otimes$ symbol and just write $a$ for the label. Note that $a \otimes a'$ has different meanings in graphs describing $DD$ and $DA$ bimodules.

See Figure~\ref{fig:UnsimplifiedDABimod12} for an example. For visual convenience, $\delta^1_1$ edges are drawn in blue, $\delta^1_3$ edges are drawn in green, $\delta^1_4$ edges are drawn in red, and $\delta^1_5$ actions are drawn in teal. The $DA$ bimodules in this paper do not have nontrivial $\delta^1_2$ actions or any $\delta^1_i$ actions for $i > 5$.

\begin{warning}\label{warn:IdentityEdges}
Since the $DA$ bimodules $X$ we consider are strictly unital, for every vertex $x$ of the directed graph there is an edge from $x$ to itself with label $\Ib \otimes \Ib'$ where $\Ib$ and $\Ib'$ are the left and right idempotents of $x$. To save space, we will omit these edges from the diagrams.
\end{warning}

The condition for a directed graph $\Gamma$ labeled as above to have compatible grading and idempotent data is as follows:
\begin{itemize}
\item each vertex and algebra element in an edge label is homogeneous with a unique pair of idempotents, 
\item for every edge from $x$ to $y$ labeled by $a \otimes (a'_1, \ldots, a'_{i-1})$, the degree of $a \otimes y$ (multiplied by $\lambda^{i-1}$) agrees with the degree of $x \otimes a'_1 \otimes \cdots \otimes a'_{i-1}$, and
\item for every edge from $x$ to $y$ labeled by $a \otimes (a'_1, \ldots, a'_{i-1})$, the left idempotent of $x$ is the left idempotent of $a$, the right idempotent of $a'_{i-1}$ is the right idempotent of $y$, the right idempotent of $x$ is the left idempotent of $a'_1$, for $1 \leq j \leq i-2$ the right idempotent of $a'_j$ is the left idempotent of $a'_{j+1}$, and the right idempotent of $a$ is the left idempotent of $y$.
\end{itemize}

If $\Gamma$ has compatible data, we have a $DA$ bimodule $X$ corresponding to $\Gamma$ when the following condition is satisfied: for vertices $x$ and $y$ of $\Gamma$, and algebra basis elements $a'_1, \ldots, a'_{i-1}$, let $S_1$ be the set of composable pairs of edges 
\[
x \xrightarrow{a \otimes (a'_1, \ldots, a'_{j-1})} z \xrightarrow{b \otimes (a'_j,\ldots,a'_{i-1})} y.
\]
Let $S_2$ be the set of single edges $x \xrightarrow{c \otimes (a''_1,\ldots,a''_{i-1})} y$ where some $a''_j$ is a nonzero term in the basis expansion of $\partial(a'_j)$ and all other $a''_k$ equal $a'_k$. Let $S_3$ be the set of single edges $x \xrightarrow{c \otimes (a''_1,\ldots,a''_{i-2})} y$ where some $a''_j$ is a nonzero term in the basis expansion of the product of $a'_j$ and $a'_{j+1}$, and $a''_k = a'_k$ for $k < j$, $a''_k = a'_{k+1}$ for $k > j$. Finally, let $S_4$ be the set of single edges $x \xrightarrow{c \otimes (a'_1, \ldots, a'_{i-1})} y$. For $X$ to be a $DA$ bimodule, the sum over $S_1$ of the product $ab$ of the edge labels, plus the sum over $S_2$ and $S_3$ of the edge labels $c$, plus the sum over $S_4$ of the derivatives $\partial(c)$ of the edge labels, must be zero in $\A$ for all $(x,y,a'_1,\ldots, a'_{i-1})$.

\begin{remark}\label{rem:IgnoreIdentityEdges}
When checking the above condition, assume that the only edges whose algebra inputs contain a primitive idempotent are the edges mentioned in Warning~\ref{warn:IdentityEdges}. Also assume that no primitive idempotent appears in the basis expansion of $\partial(a')$ for any $a' \in \A'$; these conditions will always be satisfied in this paper. It follows that one can ignore the edges of Warning~\ref{warn:IdentityEdges} when checking that $\Gamma$ defines a $DA$ bimodule. Indeed, these edges only contribute to the sets $S_1$ and $S_3$ by assumption, and the contribution to $S_1$ cancels the contribution to $S_3$.
\end{remark}

\subsection{Simplifying \texorpdfstring{$DD$}{DD} and \texorpdfstring{$DA$}{DA} bimodules}\label{sec:HowToSimplify}
For convenience, we briefly summarize a convenient way of constructing homotopy equivalences between $DD$ bimodules and between $DA$ bimodules, based on homological perturbation theory.
\begin{definition}\label{def:DDCancellablePair}
Let $\I$ be a finite direct product of copies of $\F$, let $\A$ be a dg algebra over $\I$, and let $X = (X,\delta^1)$ be a $DD$ bimodule over $\A$. A \emph{cancellable pair} in $X$ is a pair of basis elements $x$ and $y$ of $X$ such that $\delta^1(x) = (\Ib \otimes \Ib') \otimes y + \sum_{x_i \neq y} (a_i \otimes a'_i) x_i$ where $x_i$ are basis elements of $X$ and $\Ib, \Ib'$ are the first and second idempotents of $y$ (or equivalently of $x$).
\end{definition}

Note that a cancellable pair in a $DD$ bimodule $X$ corresponds to an edge labeled $\Ib \otimes \Ib'$ in the directed graph of $X$.

\begin{definition}\label{def:DDCancelling}
Let $(x,y)$ be a cancellable pair in $X$. Define a $DD$ bimodule $X'$ (under a condition to be specified) as follows: let $\Gamma$ be the labeled directed graph of $X$, and let $\Gamma'_0$ be $\Gamma$ with $x$, $y$, and all edges adjacent to them removed. For each ``zig-zag'' pattern of edges 
\[
z \xrightarrow{a_1 \otimes a'_1} y \xleftarrow{\Ib \otimes \Ib'} x \xrightarrow{a_2 \otimes a'_2} y \xleftarrow{\Ib\otimes \Ib'} x \xrightarrow{a_3 \otimes a'_3} \cdots \xrightarrow{a_n \otimes a'_n} w
\]
in $\Gamma$, where neither $z$ nor $w$ is equal to $x$ or $y$ and none of the edges labeled $a_j \otimes a'_j$ are the edge $x \xrightarrow{\Ib \otimes \Ib'}$ being cancelled, add a new edge in $\Gamma'_0$ from $z$ to $w$ with label $(a_1 \cdots a_n) \otimes (a'_1 \cdots a'_n)$. We need to assume that only finitely many edges with nonzero labels are produced by this step; in this case, we say we have a \emph{valid cancellable pair}. If the cancellable pair is valid, call the result $\Gamma'$ and let $X'$ be the $DD$ bimodule associated to $\Gamma'$.
\end{definition}

It is a standard result that $X'$ is a well-defined $DD$ bimodule with same grading structure as $X$, and that $X'$ is homotopy equivalent to $X$; we sketch a proof for completeness. One can check that $\Gamma'$ has compatible gradings and idempotents. Let $T\co X \to X$ send $y$ to $(\Ib \otimes \Ib') \otimes x$ and send all other basis elements to zero. Schematically, the differential $\delta'^1$ may be written as $\sum_{i=0}^{\infty} \delta(T\delta)^i$, or equivalently as $\sum_{i=0}^{\infty} (\delta T)^i\delta$, where $\delta$ represents all terms of $\delta^1$ except for the term $(\Ib \otimes \Ib') \otimes y$ of $\delta^1(x)$. The sums are finite since the cancellable pair is valid. 

The expression $(\mu \otimes \id) \circ (\id \otimes \delta'^1) \circ \delta'^1$ evaluates to
\[
\sum_{i=0}^{\infty} \sum_{j=0}^{\infty} (\delta T)^i \delta \delta (T\delta)^j = \sum_{i=0}^{\infty} \sum_{j=0}^{\infty} (\delta T)^i (\partial(\delta)) (T\delta)^j.
\]
This is the same result we get from $(\partial \otimes \id)\circ \delta'^1$, so $X'$ is a $DD$ bimodule.

\begin{definition}\label{def:TypeDHomEqMaps}
In the notation above, define maps $f\co X' \to X$, $g\co X \to X'$, and $h\co X \to X$ by:
\begin{itemize}
\item $f := \sum_{i = 0}^{\infty} (T \delta)^i$
\item $g := \sum_{i = 0}^{\infty} (\delta T)^i$
\item $h := \sum_{i = 0}^{\infty} T(\delta T)^i = \sum_{i \geq 0} (T \delta)^i T$,
\end{itemize}
(we implicitly ignore any outputs of $g$ involving the basis elements $x$ and $y$ that we are cancelling). The sums are finite since the cancellable pair is valid.
\end{definition}

\begin{proposition}\label{prop:TypeDHE}
The maps of Definition~\ref{def:TypeDHomEqMaps} satisfy $\partial(f) = 0$, $\partial(g) = 0$, $g \circ f = \id_{X'}$, and $f \circ g + \id_X = d(h)$. Thus, $X$ is homotopy equivalent to $X'$.
\end{proposition}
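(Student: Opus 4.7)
The plan is to view the construction as an instance of the homological perturbation lemma, with $(X, E)$ playing the role of the ``old'' chain complex, $(X', 0)$ the ``small'' complex, $\iota$ and $\pi$ the natural inclusion and projection, $T$ the contracting homotopy, and $\delta$ the perturbation. Here I write $\delta^1_X = E + \delta$, where $E$ denotes the map sending $x \mapsto (\Ib \otimes \Ib') \otimes y$ and zero on every other basis element, while $\delta$ collects all remaining edges in the graph of $X$. Before touching any of the four identities, I would record the elementary observations: $T \circ T = 0$ and $E \circ E = 0$ (each map has disjoint image and support among $\{x, y\}$); $T \circ E$ is the identity on the span of $x$ and zero elsewhere, and symmetrically $E \circ T$ is the identity on the span of $y$ and zero elsewhere; consequently $TE + ET = \id_X - \iota\pi$, where $\iota\pi\co X \to X$ is the projection onto the span of the basis elements other than $x$ and $y$. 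Finally $(\partial \otimes \id) \circ E = 0$, because $\partial$ annihilates the primitive idempotent tensor $\Ib \otimes \Ib'$ (the same convention used in Warning~\ref{warn:IdentityEdges} and the surrounding discussion).

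Using these identities, the $DD$ structure relation $\partial(E + \delta) + (E + \delta)^2 = 0$ on $X$ collapses to
\[
\partial \delta + \delta \circ E + E \circ \delta + \delta \circ \delta = 0,
\]
which is the workhorse of the proof. With this in hand I would verify the four claims in sequence. The identity $g \circ f = \id_{X'}$ reduces to $\pi\iota = \id_{X'}$ once one observes that in the double sum $\sum_{i,j}(\delta T)^i (T\delta)^j$ every nontrivial cross term has two adjacent $T$'s, killed by $T^2 = 0$; the ``outer'' leftover terms vanish after postcomposing with $\pi$ because their outputs involve $x$. The identities $\partial f = 0$ and $\partial g = 0$ follow by expanding the morphism differential $\partial f = (\partial \otimes \id)f + \delta^1_X \circ f + f \circ \delta'_{X'}$, substituting $\delta'_{X'} = \pi \bigl(\sum_i (\delta T)^i\bigr)\delta\iota$ on $X'$, and applying the collapsed structure equation termwise; the resulting series telescope, with $TE + ET = \id - \iota\pi$ used repeatedly to absorb the cross terms between $E\circ f$ and $f \circ \delta'$. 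The last identity $f \circ g + \id_X = \partial h$ is a parallel telescoping computation, where $h$ is designed precisely so that $\partial h$ captures the deficit from $TE + ET$ inside the expansion of $fg$.

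The main obstacle will be bookkeeping rather than conceptual: tracking exactly which composite terms in the three- and four-fold sums survive and exhibiting the requisite cancellations. Two features make this manageable: we work over $\F = \F_2$ so signs are never an issue, and the valid cancellable pair hypothesis guarantees finiteness of all relevant sums, so manipulations such as ``telescope and reindex'' are legitimate. A more conceptual alternative is to invoke the classical homological perturbation lemma for the dg coderivation data $(X, E, \iota, \pi, T)$ with perturbation $\delta$, which would output the formulas of Definition~\ref{def:TypeDHomEqMaps} together with all four identities of the proposition at once; this would shorten the argument but would require first importing additional categorical machinery, so for a paper in this style a direct verification seems preferable.
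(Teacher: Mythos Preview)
Your proposal is correct and takes the same approach as the paper, which simply says the identities follow from ``the same type of manipulations'' (telescoping sums) used to verify that $X'$ is a $DD$ bimodule; you have supplied considerably more detail than the paper itself does. One small imprecision: in your argument for $g \circ f = \id_{X'}$, the leftover terms with $i \geq 1$, $j = 0$ vanish not because of the projection $\pi$ but because $T$ annihilates $\iota(X')$ (the only basis element on which $T$ is nonzero is $y$, which does not lie in $X'$).
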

One can check Proposition~\ref{prop:TypeDHE} with the same type of manipulations used to check that $X'$ is a $DD$ bimodule.

\begin{definition}
Let $X$ be a $DA$ bimodule over $(\A,\A')$. A \emph{cancellable pair} in $X$ is a pair of basis elements $x$ and $y$ of $X$ such that $\delta^1_1(x) = \Ib \otimes y + \sum_{x_i \neq y} a_i \otimes x_i$ where $x_i$ are basis elements of $X$ and $\Ib$ is the left idempotent of $y$ (or equivalently of $x$). Graphically, a cancellable pair in $X$ corresponds to an edge labeled $\Ib$ in the directed graph of $X$.
\end{definition}

Let $X$ be a strictly unital $DA$ bimodule with a cancellable pair $(x,y)$. We say that $(x,y)$ is \emph{valid} if it is a valid cancellable pair in the underlying type $D$ structure of $X$. The definition of type $D$ structures is such that a $DD$ bimodule over $(\A,\A')$ is exactly a type $D$ structure over $\A \otimes \A'$; see \cite[Section 2.2.3]{LOTBimodules} for more details. Graphically, the underlying type $D$ structure of $X$ is obtained by discarding the edges representing $\delta^1_i$ actions for $i > 1$. Valid cancellable pairs in type $D$ structures are defined as in Definitions \ref{def:DDCancellablePair} and \ref{def:DDCancelling}.

Let $(x,y)$ be a valid cancellable pair in $X$. Let $\Gamma$ be the labeled directed graph of $X$, and let $\Gamma'_0$ be $\Gamma$ with $x$, $y$, and all edges adjacent to them removed. For each ``zig-zag'' pattern of edges 
\[
z \xrightarrow{a_1 \otimes (a'_{1,1}, \ldots, a'_{1,i_1-1})} y \xleftarrow{\Ib} x \xrightarrow{a_2 \otimes (a'_{2,1}, \ldots, a'_{2,i_2-1})} \cdots \xrightarrow{a_n \otimes (a'_{n,1}, \ldots, a'_{n,i_n-1})} w
\]
in $\Gamma$, where neither $z$ nor $w$ is equal to $x$ or $y$ and no edge labeled $a_j \otimes (a'_{j,1}, \ldots, a'_{j,i_j-1})$ is the edge $x \xrightarrow{\Ib \otimes \Ib'}$ being cancelled, add a new edge in $\Gamma'_0$ from $z$ to $w$ with label 
\[
a_1 \cdots a_n \otimes (a'_{1,1}, \ldots, a'_{1,i_1-1}, a'_{2,1}, \ldots, a'_{2,i_2-1},\ldots,a'_{n,1}, \ldots, a'_{n,i_n-1}).
\]
Denote the result by $\Gamma'$. Infinitely many new edges may have been added, but only finitely many were added for any given sequence of algebra inputs. 

Ozsv{\'a}th--Szab{\'o} give a version of homological perturbation theory for $DA$ bimodules in \cite[Lemma 2.12]{OSzNew}. This lemma implies that $\Gamma'$ defines a $DA$ bimodule $X'$ which is homotopy equivalent to $X$.

\subsection{Duals of modules and bimodules}\label{sec:DualsOfModules}
When discussing certain symmetries in the bimodules we define below, it will useful to have a notion of duality for $DD$ and $DA$ bimodules. Given $(G,\lambda)$ and a left $G$--set $S$, let $S^*$ denote the right $G$--set with the same elements as $S$ (written $s^*$ for $s \in S$) and $G$--action defined by $s^*g = (g^{-1}s)^*$; see \cite[Definition 2.5.19]{LOTBimodules}. We have $(S^*)^* \cong S$. We may equivalently view $S^*$ as a left $G^{\op}$--set.

If $\A$ is a dg algebra graded by $(G,\lambda)$, we may view $\A^{\op}$ as a dg algebra graded by $(G^{\op},\lambda)$ with $(\A^{\op})_G = \A_g$ for $g \in G = G^{\op}$ (where the identification $G = G^{\op}$ is of sets without multiplication).

\begin{definition}[Definition 2.2.31 of \cite{LOTBimodules}]
Let $X$ be a finitely generated $DD$ bimodule over $(\A,\A')$. Suppose that $\A$ and $\A'$ are graded by $(G,\lambda)$ and $(G',\lambda')$, and that $X$ is graded by a left $G \times_{\lambda} G'$--set $S$. The \emph{dual} $X^{\vee}$ of $X$ (called the opposite of $X$ by Lipshitz--Ozsv{\'a}th--Thurston) is a $DD$ bimodule over $(\A^{\op},\A'^{\op})$, graded by $S^*$, which is defined by $(X^{\vee})_{s^*} := \Hom_{\F}(X_s,\F)$ as a vector space over $\F$. Identifying $\I$ with $\I^{\op}$ and $\I'$ with $\I'^{\op}$, we can view $\A^{\op}$ and $\A'^{\op}$ as dg algebras over $\I$ and $\I'$. The left $\I \otimes \I'$-module structure on $X^{\vee}$ is given by 
\[
(\Ib \otimes \Ib') \cdot \phi = \phi((\Ib \otimes \Ib') \cdot -)
\]
for $\phi \in X^{\vee} = \Hom(X,\F)$. The $DD$ bimodule operation $(\delta^{\vee})^1$ on $X^{\vee}$ sends $\phi \in \Hom_{\F}(X,\F)$ to 
\[
(\id \otimes \phi) \circ \delta^1 \in \Hom_{\F}(X, \A^{\op} \otimes \A'^{\op}) \cong (\A^{\op} \otimes \A'^{\op}) \otimes_{\F} \Hom_{\F}(X,\F).
\]
Given a basis for $X$ satisfying the usual conditions, we can choose the dual basis for $X^{\vee}$. In these bases, the labeled directed graph of $X^{\vee}$ is obtained from that of $X$ by reversing all the arrows and interpreting their labels as elements of $\A^{\op}$. It follows that $X^{\vee}$ is a valid $DD$ bimodule over $(\A^{\op},\A'^{\op})$ and that we may naturally identify $X$ and $(X^{\vee})^{\vee}$ (one could also check these statements algebraically; see \cite[Lemma 2.2.32]{LOTBimodules}).
\end{definition}

\begin{remark}\label{rem:DualGrading}
In this paper, with $G = \Z \oplus M$, $G' = \Z \oplus M'$, and $S = \Z \oplus \overline{M}$ where $\overline{M}$ is equipped with homomorphisms from $M$ and $M'$, there is an isomorphism of right $G \times_{\lambda} G'$--sets (i.e. left $(G \times_{\lambda} G')^{\op}$--sets or just left $(G \times_{\lambda} G')$--sets since the groups involved are abelian) from $S^*$ to $S$ sending $(n,m)$ to $(-n,-m)$. For the $DD$ bimodules $X$ we consider, graded by $S$, we will view $X^{\vee}$ as graded by the left $G \times_{\lambda} G'$--set $S$ via this isomorphism. Concretely, all degrees (both homological and intrinsic) of dual basis elements of $X^{\vee}$ are the negatives of the degrees for the corresponding basis elements of $X$. 
\end{remark}

\begin{definition}[Definition 2.2.53 of \cite{LOTBimodules}]
Let $X$ be a finitely generated $DA$ bimodule over $(\A,\A')$. The \emph{dual} $X^{\vee}$ of $X$ (called the opposite of $X$ by Lipshitz--Ozsv{\'a}th--Thurston) is a $DA$ bimodule over $(\A^{\op},\A'^{\op})$ defined as an $S^*$-graded vector space over $\F$ by $(X^{\vee})_{s^*} := \Hom_{\F}(X_s,\F)$. We view $X^{\vee}$ as a (left, right) bimodule over $(\I,\I')$ by $\Ib \cdot \phi = \phi(\Ib \cdot -)$ and $\phi \cdot \Ib' = \phi(- \cdot \Ib')$ for $\Ib \in \I$, $\Ib' \in \I'$ and $\phi \in X^{\vee} = \Hom(X,\F)$. 

The $DA$ operations $(\delta^{\vee})^1_i$ on $X^{\vee}$ send
\[
\phi \otimes a'_1 \otimes \cdots \otimes a'_{i-1}
\]
where $\phi \in \Hom(X,\F)$, to 
\[
(\id \otimes \phi) \circ \delta^1_i(-,a'_{i-1},\ldots,a'_i) \in \Hom_{\F}(X, \A^{\op}) \cong \A^{\op} \otimes \Hom_{\F}(X,\F).
\]
Given a basis for $X$ satisfying the usual conditions, we can choose the dual basis for $X^{\vee}$. In these bases, the labeled directed graph of $X^{\vee}$ is obtained from that of $X$ by reversing all the arrows and reversing the order of each sequence of algebra basis elements appearing as an input label for a $\delta^1_i$ action with $i \geq 3$. It follows that $X^{\vee}$ is a valid $DA$ bimodule over $(\A^{\op},\A'^{\op})$ and that we may naturally identify $X$ and $(X^{\vee})^{\vee}$.
\end{definition}

As with $DD$ bimodules, we can view the dual $X^{\vee}$ of any of the $S$--graded $DA$ bimodules $X$ considered in this paper as being $S$--graded itself; the degree of a basis element of $X^{\vee}$ is the negative of the degree of the corresponding basis element of $X$.

\begin{remark}
In \cite[Definition 2.2.53]{LOTBimodules}, Lipshitz--Ozsv{\'a}th--Thurston define the opposite of a $DA$ bimodule over $(\A,\A')$ to be an $AD$ bimodule over $(\A',\A)$. Such a bimodule can equivalently be viewed as a $DA$ bimodule over $(\A^{\op},\A'^{\op})$; this perspective is responsible for the reversal of order in the sequences above. While Lipshitz--Ozsv{\'a}th--Thurston only discuss the opposites of finitely generated $DA$ bimodules over $(\A,\A')$ when $\A$ and $\A'$ are also finite-dimensional over $\F$, the above definition gives a valid $DA$ bimodule even when $\A$ and $\A'$ are infinite-dimensional over $\F$ (Ozsv{\'a}th--Szab{\'o}'s algebras $\B(n)$ and $\B^!(n)$, discussed below, are infinite--dimensional over $\F$ for all $n \geq 1$). The identification $X \cong (X^{\vee})^{\vee}$ depends only on the finite-dimensionality of $X$.
\end{remark}

\section{Ozsv{\'a}th--Szab{\'o}'s algebras}\label{sec:OSzAlgs}

\subsection{Definitions}

We review some dg algebras introduced by Ozsv{\'a}th--Szab{\'o} in \cite{OSzNew}. The algebra $\B(n)$ mentioned above is a direct sum of algebras $\B(n,k)$ for $0 \leq k \leq n$. The following generators-and-relations description of $\B(n,k)$ is shown in \cite{MMW1} to be equivalent to the definition given in \cite{OSzNew}.

\begin{definition}[cf. Section 3 of \cite{OSzNew}, Theorem 1.2 of \cite{MMW1}]\label{OSzAlgDef}
For $n \geq 0$ and $0 \leq k \leq n$, the dg algebra $\B(n,k)$ is the algebra of the following quiver, with zero differential and with relations to be specified (the grading will be discussed in Section~\ref{sec:AlgGrading}). The vertices of the quiver are subsets $\x$ of $\{0,\ldots,n\}$ with $|\x| = k$. If $\x \cap \{i-1,i\} = \{i-1\}$ and $\y = \x \setminus \{i-1\} \cup \{i\}$ for some $i$, we add an arrow from $\x$ to $\y$ with label $R_i$. If $\x \cap \{i-1,i\} = \{i\}$ and $\y = \x \setminus \{i\} \cup \{i-1\}$ for some $i$, we add an arrow from $\x$ to $\y$ with label $L_i$. For all vertices $\x$ of the quiver and all $i \in \{1, \ldots, n\}$, we add an arrow from $\x$ to itself with label $U_i$.  The relations are of the following type:
\begin{itemize}
\item $[R_i,R_j] = 0$, $[L_i,L_j] = 0$, and $[R_i,L_j] = 0$ if $|i-j|>1$
\item $[U_i,A] = 0$  for all labels $A$
\item $L_i R_i = U_i$, $R_i L_i = U_i$
\item $R_i R_{i+1} = 0$, $L_i L_{i-1} = 0$
\item $U_i = 0$ at a vertex $\x$ if $\x \cap \{i-1,i\} = \varnothing$.
\end{itemize}
These relations (except those of the form $U_i = 0$) are assumed to hold whenever any composable pair of arrows has labels appearing as a term in one of the above expressions. Note that not every nonzero $U_i$ generator may be factored as $R_i L_i$ or $L_i R_i$.
\end{definition}

Following Ozsv{\'a}th--Szab{\'o}'s terminology in \cite{OSzNew}, we will refer to vertices $\x$ of the above quiver as \emph{I-states}. For each I-state $\x$, we have an element of $\B(n,k)$ corresponding to the empty sequence of arrows based at $\x$. We call this element $\Ib_{\x}$, following \cite[Section 3.1]{OSzNew}; these elements form a set of $\binom{n}{k}$ orthogonal idempotents of $\B(n,k)$ summing to the identity. Via this set of idempotents, we can view $\B(n,k)$ as an algebra over the ring $\I(n,k) \cong \F^{\times \binom{n}{k}}$ generated by the elements $\Ib_{\x}$.

We may define a related algebra $\B^!(n,k)$ by adding edges $C_i$, for $1 \leq i \leq n$, from each vertex $\x$ to itself. We impose the relations $C_i^2 = 0$ and $[C_i,A] = 0$ for all labels $A$. We give $\B^!(n,k)$ a differential by defining $\partial(C_i) = U_i$. We may also view $\B^!(n,k)$ as an algebra over $\I(n,k)$. In the notation of \cite{OSzNew}, we have $\B(n,k) = \B(n,k,\varnothing)$ and $\B^!(n,k) = \B(n,k,\Sc)$ where $\Sc = \{1,\ldots,n\}$.

Let $\B(n) := \oplus_{k=0}^n \B(n,k)$ and $\B^!(n) := \oplus_{k=0}^n \B^!(n,k)$. We may view $\B(n)$ and $\B'(n)$ as dg algebras over $\I(n) := \prod_{k=0}^n \I(n,k)$.

\begin{remark}
Ozsv{\'a}th--Szab{\'o} show in \cite[Section 2.9]{OSzNew} that $\B'(n,n-k)$ is Koszul dual to $\B(n,k)$.
\end{remark}

\subsection{Basis}

To define the $DA$ bimodule $\X^{DA}$ over $\B(2,k)$ graphically, we need a basis for $\B(2,k)$. The basis should contain the idempotents $\Ib_{\x}$ and consist of homogeneous elements with unique pairs of idempotents. Ozsv{\'a}th--Szab{\'o} give such a basis for $\B(n,k)$ in \cite[Proposition 3.7]{OSzNew} The basis elements can be described in terms of quiver generators as in \cite[Corollary 4.12]{MMW1}; we do this for $n = 2$ below.

\begin{proposition}
A basis for $\B(2,0)$ is given by the single element $\{\Ib_{\varnothing}\}$. For $\B(2,1)$:
\begin{itemize}
\item A basis for $\Ib_{\{0\}} \B(2,1) \Ib_{\{0\}}$ is given by elements $U_1^i$ for $i \geq 0$.
\item A basis for $\Ib_{\{0\}} \B(2,1) \Ib_{\{1\}}$ is given by elements $R_1 U_1^i$ for $i \geq 0$.
\item We have $\Ib_{\{0\}} \B(2,1) \Ib_{\{2\}} = 0$.
\item A basis for $\Ib_{\{1\}} \B(2,1) \Ib_{\{0\}}$ is given by elements $L_1U_1^i$ for $i \geq 0$.
\item A basis for $\Ib_{\{1\}} \B(2,1) \Ib_{\{1\}}$ is given by elements $\Ib_{\{1\}}$, $U_1^i$ for $i \geq 1$, and $U_2^i$ for $i \geq 1$.
\item A basis for $\Ib_{\{1\}} \B(2,1) \Ib_{\{2\}}$ is given by elements $R_2 U_2^i$ for $i \geq 0$.
\item We have $\Ib_{\{2\}} \B(2,1) \Ib_{\{0\}} = 0$.
\item A basis for $\Ib_{\{2\}} \B(2,1) \Ib_{\{1\}}$ is given by elements $L_2 U_2^i$ for $i \geq 0$.
\item A basis for $\Ib_{\{2\}} \B(2,1) \Ib_{\{2\}}$ is given by elements $U_2^i$ for $i \geq 0$.
\end{itemize}
For $\B(2,2)$:
\begin{itemize}
\item A basis for $\Ib_{\{0,1\}} \B(2,2) \Ib_{\{0,1\}}$ is given by elements $U_1^i U_2^j$ for $i,j \geq 0$.
\item A basis for $\Ib_{\{0,1\}} \B(2,2) \Ib_{\{0,2\}}$ is given by elements $R_2 U_1^i U_2^j$ for $i,j \geq 0$.
\item A basis for $\Ib_{\{0,1\}} \B(2,2) \Ib_{\{1,2\}}$ is given by elements $R_2 R_1 U_1^i U_2^j$ for $i,j \geq 0$.
\item A basis for $\Ib_{\{0,2\}} \B(2,2) \Ib_{\{0,1\}}$ is given by elements $L_2 U_1^i U_2^j$ for $i,j \geq 0$.
\item A basis for $\Ib_{\{0,2\}} \B(2,2) \Ib_{\{0,2\}}$ is given by elements $U_1^i U_2^j$ for $i,j \geq 0$.
\item A basis for $\Ib_{\{0,2\}} \B(2,2) \Ib_{\{1,2\}}$ is given by elements $R_1 U_1^i U_2^j$ for $i,j \geq 0$.
\item A basis for $\Ib_{\{1,2\}} \B(2,2) \Ib_{\{0,1\}}$ is given by elements $L_1 L_2 U_1^i U_2^j$ for $i,j \geq 0$.
\item A basis for $\Ib_{\{1,2\}} \B(2,2) \Ib_{\{0,2\}}$ is given by elements $L_1 U_1^i U_2^j$ for $i,j \geq 0$.
\item A basis for $\Ib_{\{1,2\}} \B(2,2) \Ib_{\{1,2\}}$ is given by elements $U_1^i U_2^j$ for $i,j \geq 0$.
\end{itemize}
Finally, a basis for $\B(2,3) = \Ib_{\{1,2,3\}} \B(2,3) \Ib_{\{1,2,3\}}$ is given by elements $U_1^i U_2^j$ for $i,j \geq 0$.
\end{proposition}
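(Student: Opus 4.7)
The plan is to deduce the proposition by specializing the general $\F$-basis of $\B(n,k)$ from \cite[Proposition 3.7]{OSzNew} (equivalently, the quiver-theoretic description in \cite[Corollary 4.12]{MMW1}) to the case $n = 2$, after which the statement reduces to a finite bookkeeping exercise. All of the defining relations commute the $U_i$ past every other generator, so any monomial in the quiver generators can be rewritten as $w \cdot U_1^a U_2^b$ for a short $L,R$-word $w$. The reductions available at $n = 2$ are $R_i R_{i+1} = 0$, $L_i L_{i-1} = 0$, $L_i R_i = R_i L_i = U_i$, and $U_i \Ib_{\x} = 0$ whenever $\x \cap \{i-1,i\} = \varnothing$.

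For each pair $(\x,\y)$ of I-states with $|\x| = |\y|$, I would enumerate the reduced $L,R$-words $w$ connecting $\x$ to $\y$ (always very short for $n = 2$) and then pair each such $w$ with the admissible $U$-monomials at $\y$. For example, at $\Ib_{\{1\}} \B(2,1) \Ib_{\{1\}}$ the only reduced $L,R$-loops collapse to the empty word, to $U_1 = L_1 R_1$, or to $U_2 = R_2 L_2$, while the product $U_1 U_2$ is forced to vanish because it factors through $R_1 R_2 = 0$; this recovers exactly the stated basis $\Ib_{\{1\}}, U_1^i, U_2^j$. The vanishing of $\Ib_{\{0\}} \B(2,1) \Ib_{\{2\}}$ follows similarly from $R_1 R_2 = 0$, and the vanishing of $\Ib_{\{2\}} \B(2,1) \Ib_{\{0\}}$ from $L_2 L_1 = 0$. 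The other pieces, including the $k = 2$ and the final cases, come from analogous elementary enumerations: at the two-element I-states both $U_1$ and $U_2$ are nonzero and no vanishing relation between them applies, which is why the basis upgrades to the full set $\{U_1^i U_2^j\}$ with an allowed prefix $w \in \{1, L_1, R_1, L_2, R_2, L_1 L_2, R_2 R_1\}$ according to which pair of I-states is in play.

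Linear independence of each list is inherited directly from the general basis results cited above, since the proposed elements are precisely the standard normal forms appearing in those theorems. The main obstacle is simply the amount of casework -- nine $(\x,\y)$ pairs at each of $k=1$ and $k=2$ -- together with the need to be careful about the less obvious vanishings coming from $R_i R_{i+1} = 0$ and $L_i L_{i-1} = 0$; once those reductions are carried out once each, the enumeration matches the statement line by line.
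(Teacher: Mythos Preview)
Your approach is correct and matches the paper's own treatment: the proposition is stated there without proof, immediately after citing \cite[Proposition 3.7]{OSzNew} and \cite[Corollary 4.12]{MMW1} as providing the general basis, with the $n=2$ case being just the explicit specialization. The extra details you give (e.g., why $U_1 U_2$ vanishes at $\Ib_{\{1\}}$ via the factorization through $R_1 R_2 = 0$) are accurate elaborations of the bookkeeping the paper leaves implicit.
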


\subsection{Gradings}\label{sec:AlgGrading}
The algebra $\B(n,k)$ has a grading by $(\frac{1}{2}\Z)^n$ that we call the refined Alexander multi-grading, as well as a grading by $\Z^{2n}$ considered in \cite{MyKSQuiver, MMW1, MMW2} and called the unrefined Alexander multi-grading. Our terminology here contrasts with that of \cite{MyKSQuiver} but is more in line with the use of ``refined'' and ``unrefined'' in \cite{LOTBorderedOrig}. Indeed, it is shown in \cite{MMW2} that $\B(n,k)$ is quasi-isomorphic to a generalized bordered strands algebra $\A(n,k)$ such that the $(\frac{1}{2}\Z)^n$ grading and $\Z^{2n}$ grading correspond respectively to the usual refined and unrefined gradings on strands algebras.

We will refer to the standard generators of $\Z^n$ as $e_1, \ldots, e_n$ and the standard generators of $\Z^{2n}$ as $\tau_1, \beta_1, \ldots, \tau_n, \beta_n$. 
\begin{definition}
The unrefined Alexander multi-degrees of the generators of $\B(n,k)$ are defined as follows:
\begin{itemize}
\item $\deg^{\un}(R_i) = \tau_i$
\item $\deg^{\un}(L_i) = \beta_i$
\item $\deg^{\un}(U_i) = \tau_i + \beta_i$.
\end{itemize}
\end{definition}

Define a homomorphism $\eta$ from the unrefined group $\Z^{2n}$ to the refined group $(\frac{1}{2}\Z)^n$ by sending $\tau_i$ and $\beta_i$ to $\frac{e_i}{2}$. The refined grading on $\B(n,k)$ may be obtained by applying $\eta$ to the unrefined degrees. We may further collapse the refined grading into a single Alexander grading by applying the sum map from $(\frac{1}{2}\Z)^n$ to $\frac{1}{2}\Z$. For convenience, we list the refined and single Alexander degrees of generators below.

\begin{proposition}
The refined Alexander multi-degrees and single Alexander degrees of the generators of $\B(n,k)$ are given as follows:
\begin{itemize}
\item $R_i$ and $L_i$ have refined Alexander multi-degree $\frac{e_i}{2}$ and single Alexander degree $\frac{1}{2}$.
\item $U_i$ has refined Alexander multi-degree $e_i$ and single Alexander degree $1$.
\end{itemize}
\end{proposition}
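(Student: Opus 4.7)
The statement is essentially a direct computation from the preceding definitions, so my plan is to simply apply the homomorphism $\eta\co\Z^{2n}\to(\tfrac12\Z)^n$ to the unrefined degrees already listed, and then apply the sum map $(\tfrac12\Z)^n\to\tfrac12\Z$ to collapse to the single Alexander degree. Concretely, I would proceed generator by generator.

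First, for $R_i$ the unrefined Alexander multi-degree is $\tau_i$ by the preceding definition, so its refined multi-degree is $\eta(\tau_i)=\tfrac{e_i}{2}$, and summing the (single nonzero) component gives single Alexander degree $\tfrac12$. Symmetrically, $L_i$ has unrefined degree $\beta_i$, so its refined multi-degree is $\eta(\beta_i)=\tfrac{e_i}{2}$ with single degree $\tfrac12$. For $U_i$, the unrefined degree is $\tau_i+\beta_i$, so applying $\eta$ gives
\[
\eta(\tau_i+\beta_i)=\eta(\tau_i)+\eta(\beta_i)=\tfrac{e_i}{2}+\tfrac{e_i}{2}=e_i,
\]
and summing components yields single Alexander degree $1$. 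Each of these computations uses only that $\eta$ is a homomorphism and the two definitional formulas.

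The only substantive thing to check is consistency: since $\B(n,k)$ is defined by relations, one must verify that these degrees are compatible with the relations of Definition 3.1 (so that passing to the quotient is well-defined). For instance, the relation $L_iR_i=U_i$ requires $\tfrac{e_i}{2}+\tfrac{e_i}{2}=e_i$ refined and $\tfrac12+\tfrac12=1$ single, which hold; the relation $R_iR_{i+1}=0$ is homogeneous since both sides have a well-defined degree even when zero; and commutators $[R_i,R_j]$, etc., are automatically homogeneous. Since the refined grading is obtained from the unrefined grading (already verified to exist in the cited references) by pushing forward along $\eta$, this consistency is automatic, and there is no real obstacle.

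In practice I would just state the computation in one or two lines, remarking that the refined grading is the pushforward of the unrefined one along $\eta$ and the single Alexander grading is its further pushforward along summation, and that the quoted values are immediate from the definitional formulas for $\deg^{\un}$.
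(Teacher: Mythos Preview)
Your proposal is correct and matches the paper's approach: the proposition is stated without proof as a convenience listing, since the values follow immediately by applying $\eta$ to the unrefined degrees and then the sum map. Your additional remarks about consistency with the relations are fine but unnecessary, since (as you note) the refined grading is a pushforward of the already-established unrefined grading.
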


The algebra $\B^!(n,k)$ also has an unrefined grading; the following grading is appropriate for defining $DD$ bimodules over $\B(n,k) \otimes \B^!(n,n-k)$ as we will do below.
\begin{definition}
The unrefined Alexander multi-grading on the generators of $\B^!(n,k)$ is defined as follows:
\begin{itemize}
\item $\deg^{\un}(R_i) = -\beta_i$
\item $\deg^{\un}(L_i) = -\tau_i$
\item $\deg^{\un}(U_i) = \deg^{\un}(C_i) = -\tau_i - \beta_i$.
\end{itemize}
\end{definition}

\begin{proposition}
The refined Alexander multi-degrees and single Alexander degrees of the generators of $\B^!(n,k)$ are given as follows:
\begin{itemize}
\item $R_i$ and $L_i$ have refined Alexander multi-degree $-\frac{e_i}{2}$ and single Alexander degree $-\frac{1}{2}$.
\item $U_i$ and $C_i$ have refined Alexander multi-degree $-e_i$ and single Alexander degree $-1$.
\end{itemize}
\end{proposition}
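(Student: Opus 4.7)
The plan is to deduce both the refined Alexander multi-degrees and the single Alexander degrees from the unrefined degrees given in the immediately preceding definition, using the same two-step procedure that was spelled out for $\B(n,k)$: first apply the homomorphism $\eta\co \Z^{2n} \to (\tfrac{1}{2}\Z)^n$ sending $\tau_i, \beta_i \mapsto \tfrac{e_i}{2}$ to obtain the refined multi-degree, and then apply the sum map $(\tfrac{1}{2}\Z)^n \to \tfrac{1}{2}\Z$ to obtain the single degree. The content of the proposition is that these compositions give the stated values when applied to the unrefined degrees of $R_i$, $L_i$, $U_i$, and $C_i$ in $\B^!(n,k)$.

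First, for $R_i$ the unrefined degree is $-\beta_i$, so $\eta(-\beta_i) = -\tfrac{e_i}{2}$, and summing the coordinates gives $-\tfrac{1}{2}$. Identically, for $L_i$ the unrefined degree is $-\tau_i$, giving refined degree $-\tfrac{e_i}{2}$ and single degree $-\tfrac{1}{2}$. For $U_i$ and $C_i$ the unrefined degree is $-\tau_i - \beta_i$, so $\eta(-\tau_i - \beta_i) = -\tfrac{e_i}{2} - \tfrac{e_i}{2} = -e_i$, with single degree $-1$. These are exactly the stated values.

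The one thing worth checking, since this is the first place the refined grading appears on $\B^!(n,k)$, is that this definition actually descends to a well-defined grading on the algebra. This amounts to confirming two compatibilities: first, that $\eta$ respects all the relations used to define $\B^!(n,k)$ (the relations inherited from $\B(n,k)$, together with $C_i^2 = 0$ and the commutation of $C_i$ with every label), which is automatic because the unrefined grading is already known to satisfy these relations and $\eta$ is a group homomorphism; and second, that the refined degree is shifted by $\lambda = (1,0)$ under $\partial$, i.e.\ that the intrinsic part is preserved. Since $\partial(C_i) = U_i$ and both $C_i$ and $U_i$ have refined degree $-e_i$, this holds, and analogously for the single Alexander grading. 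The main (and only real) obstacle is purely bookkeeping, namely keeping track of signs in the unrefined degrees for $\B^!(n,k)$ as compared to $\B(n,k)$; once this is done carefully, the calculation is immediate.
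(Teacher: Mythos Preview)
Your proposal is correct and follows exactly the approach implicit in the paper: the proposition is stated there without proof because it follows immediately from applying the homomorphism $\eta$ (and then the sum map) to the unrefined degrees given in the preceding definition, which is precisely the computation you carry out. The additional well-definedness check you include is not needed for the proposition as stated, but it is correct and harmless.
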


The algebra $\B(n,k)$ has no homological grading; equivalently, it is placed in homological degree zero. For $\B^!(n,k)$, we reverse the signs of Ozsv{\'a}th--Szab{\'o}'s homological grading as mentioned in Remark~\ref{rem:HomGr}. In our conventions, $R_i$, $L_i$, and $C_i$ have degree $1$, while $U_i$ has degree $2$. 

\begin{remark}
Lipshitz--Ozsv{\'a}th--Thurston's algebras in \cite{LOTBorderedOrig, LOTBimodules} usually do not admit homological gradings by $\Z$. These complications do not arise for the algebras we consider or for their strands-algebra versions $\A(n,k)$ from \cite{MMW2}.
\end{remark}

\subsection{Symmetries}\label{sec:AlgSymm}

Ozsv{\'a}th--Szab{\'o} point out two symmetries of their algebras, including $\B(n,k)$ and $\B^!(n,k)$, in \cite[Section 3.6]{OSzNew}. The first symmetry, which they call $\Rc$, gives dg ring endomorphisms of $\B(n,k)$ and $\B^!(n,k)$ with $\Rc^2 = \id$ (this symmetry was called $\rho$ in \cite{MMW1, MMW2}). It acts as a nontrivial involution on the primitive idempotents; explicitly, $\Rc$ restricts to a map from $\I(n,k)$ to itself sending $\Rc(\Ib_{\x}) = \Ib_{\y}$ where $\y := \{n-i \vert i \in \x\}$. On the generators of the algebras $\B(n,k)$ and $\B^!(n,k)$, $\Rc$ sends:
\begin{itemize}
\item $R_i \leftrightarrow L_{n-i}$
\item $U_i \leftrightarrow U_{n-i}$
\item $C_i \leftrightarrow C_{n-i}$.
\end{itemize}
The second symmetry, which Ozsv{\'a}th--Szab{\'o} call $o$, gives dg algebra homomorphisms from $\B(n,k)$ to $\B(n,k)^{\op}$ and from $\B^!(n,k)$ to $\B^!(n,k)^{\op}$ with $o^2 = \id$. It acts trivially on $\I(n,k)$. On the algebra generators, it sends:
\begin{itemize}
\item $R_i \leftrightarrow L_i$
\item $U_i \leftrightarrow U_i$
\item $C_i \leftrightarrow C_i$.
\end{itemize}

Both $\Rc$ and $o$ preserve the homological grading and are compatible with corresponding symmetries of the unrefined grading group. Let $\Rc\co \Z^{2n} \to \Z^{2n}$ send $\tau_i$ to $\beta_{n-i}$ and $\beta_i$ to $\tau_{n-i}$. Let $o\co \Z^{2n} \to \Z^{2n}$ send $\tau_i$ to $\beta_i$ and $\beta_i$ to $\tau_i$. Then for an algebra element $a$ homogeneous with respect to the unrefined grading, we have $\deg^{\un}(\Rc(a)) = \Rc(\deg^{\un}(a))$ and $\deg^{\un}(o(a)) = o(\deg^{\un}(a))$.

\subsection{A canonical \texorpdfstring{$DD$}{DD} bimodule}\label{sec:CanonicalDD}

In \cite[Section 3.7]{OSzNew}, Ozsv{\'a}th--Szab{\'o} define a $DD$ bimodule over $(\B(n,k),\B^!(n,n-k))$. They use this bimodule to show that $\B(n,k)$ and $\B^!(n,n-k)$ are Koszul dual. We review the definition of this bimodule below.

\begin{definition}[Section 3.7 of \cite{OSzNew}]\label{def:KoszulDualizing}
If $\x \subset \{1,\ldots,n\}$ is an I-state, the complement of $\x$ will refer to the I-state $\{1, \ldots, n\} \setminus \x$. Let ${^{\B(n,k),\B^!(n,n-k)}}\K$ be the $\F$--vector space formally spanned by elements $k_{\x}$ for $\x \subset \{0,\ldots,1\}$ with $|\x| = k$. Define a left action of $\I(n,k) \otimes \I(n,n-k)$ on $\K$ by
\[
(\Ib_{\x'} \otimes \Ib_{\x''}) \cdot k_{\x} := \begin{cases} k_{\x} & \x' = \x \textrm{ and } \x'' \textrm{ is the complement of }\x \\ 0 & \textrm{otherwise.} \end{cases}
\]
Define a map $\delta^1\co \K \to \B(n,k) \otimes \B^!(n,n-k) \otimes_{\I(n,k) \otimes \I(n,n-k)} \K$ by
\[
\delta^1(k_{\x}) = \sum_{\y} \bigg( (\Ib_{\x} \otimes \Ib_{\x'}) \cdot \bigg(\sum_{i=1}^n (L_i \otimes R_i + R_i \otimes L_i + U_i \otimes C_i)\bigg) \cdot (\Ib_{\y} \otimes \Ib_{\y'}) \bigg) \otimes k_{\y}.
\]
Here, $R_i$, $L_i$, $U_i$, and $C_i$ stand for sums of all algebra elements represented by quiver arrows with the corresponding label.
\end{definition}

In \cite[Theorem 3.17]{OSzNew}, Ozsv{\'a}th--Szab{\'o} show that $\K$ is quasi--invertible (see \cite[Section 2.9]{OSzNew}), so that $\B^!(n,n-k)^{\op}$ is Koszul dual to $\B(n,k)$. Below we show that our $DD$ bimodule $\X^{DD}$ and our $DA$ bimodule $\X^{DA}$ for a singular crossing are related by $\X^{DD} \cong \X^{DA} \boxtimes \K$.

\section{The local \texorpdfstring{$DD$}{DD} bimodule for a singular crossing}\label{sec:LocalDD}
\subsection{Definitions}\label{sec:LocalDDDefs}

In this section we will define $\X^{DD}$, a (left, left) $DD$ bimodule over $(\B(2), \B^!(2))$. We start with names for the I-states $\x$ giving rise to the primitive idempotents $\Ib_{\x}$ of $\B(2)$; for convenience, let
\begin{align*}
&A := \{0\}, \quad B := \{1\}, \quad C := \{2\}, \\
&AB := \{0,1\}, \quad AC := \{0,2\}, \quad BC := \{1,2\}, \\
&ABC := \{0,1,2\}.
\end{align*}

The $DD$ bimodule $\X^{DD}$ respects the decomposition $\B(2) = \B(2,0) \oplus \B(2,1) \oplus \B(2,2)$. Indeed, we will define three $DD$ bimodules 
\[
{^{\B(2,0),\B^!(2,3)}}\X^{DD}, \quad {^{\B(2,1),\B^!(2,2)}}\X^{DD}, \quad {^{\B(2,2),\B^!(2,1)}}\X^{DD};
\]
we will let $\X^{DD}$ be their direct sum (we can let ${^{\B(2,3),\B^!(2,0)}}\X^{DD} = 0$).
\begin{definition}\label{def:XDD}
We will leave the gradings for Section~\ref{sec:LocalDDGradings} below. The $DD$ bimodule ${^{\B(2,0),\B^!(2,3)}}\X^{DD}$ has basis elements 
\[
\{S^-_t, S^-_b, S^+_b, S^+_t\},
\]
all of which have first idempotent $\Ib_{\varnothing}$ and second idempotent $\Ib_{ABC}$. The one nonzero term of $\delta^1$ is
\[
\delta^1(S^+_b) = (\Ib_{\varnothing} \otimes \Ib_{ABC}) \otimes S^-_b.
\]

The $DD$ bimodule ${^{\B(2,1),\B^!(2,2)}}\X^{DD}$ has basis elements 
\begin{align*}
&\{ {_{A}}(S^-_t)^{BC}, {_{C}}(S^-_t)^{AB}, {_{B}}(W_t)^{BC}, {_{A}}(S^-_b)^{BC}, {_{C}}(S^-_b)^{AB}, {_{B}}(E_b)^{AB}, \\
&{_{B}}(W_b)^{BC},  {_{A}}(S^+_b)^{BC}, {_{C}}(S^+_b)^{AB}, {_{B}}(E_t)^{AB}, {_{A}}(S^+_t)^{BC}, {_{C}}(S^+_t)^{AB} \},
\end{align*}
where the first idempotent is indicated as a subscript to the left and the second idempotent is indicated as a superscript to the right. 

Label these basis elements, in the given order, as $(1)$ through $(12)$. The $DD$ operation $\delta^1$ is defined by:
\begin{itemize}
\item 
$\delta^1((1)) = (R_1 \otimes L_1 L_2) \otimes (6) + (R_1 \otimes U_1 C_2) \otimes (7) + (U_1 \otimes U_1 C_2) \otimes (8)$
\item
$\delta^1((2)) = (L_2 \otimes R_2 R_1) \otimes (3) + (L_2 \otimes C_1 U_2) \otimes (10)$
\item
$\delta^1((3)) = (L_1 \otimes \Ib_{BC}) \otimes (1) + (\Ib_{B} \otimes L_1 L_2) \otimes (10) + (L_1 \otimes U_1 C_2) \otimes (11)$
\item
$\delta^1((4)) = 0$
\item
$\delta^1((5)) = (U_2 \otimes \Ib_{AB}) \otimes (2) + (L_2 \otimes R_2 R_1) \otimes (7) + (U_2 \otimes C_1 U_2) \otimes (12)$
\item $\delta^1((6)) = (R_2 \otimes \Ib_{AB}) \otimes (2) + (\Ib_{B} \otimes R_2 R_1) \otimes (7)$
\newline $+ (L_1 \otimes R_2 R_1) \otimes (8) + (R_2 \otimes C_1 U_2) \otimes (12)$
\item
$\delta^1((7)) = (U_2 \otimes \Ib_{BC}) \otimes (3) + (L_1 \otimes \Ib_{BC}) \otimes (4) + (R_2 \otimes L_1 L_2) \otimes (12)$
\item
$\delta^1((8)) = (\Ib_{A} \otimes \Ib_{BC}) \otimes (4)$
\item
$\delta^1((9)) = (\Ib_{C} \otimes \Ib_{AB}) \otimes (5) + (L_2 \otimes \Ib_{AB}) \otimes (6)$
\item
$\delta^1((10)) = (U_1 \otimes \Ib_{AB}) \otimes (6) + (L_1 \otimes R_2 R_1) \otimes (11)$
\item
$\delta^1((11)) = (R_1 \otimes \Ib_{BC}) \otimes (7) + (U_1 \otimes 1) \otimes (8)$
\item
$\delta^1((12)) = (L_2 \otimes \Ib_{AB}) \otimes (10)$.
\end{itemize}

The $DD$ bimodule ${^{\B(2,2),\B^!(2,1)}}\X^{DD}$ has basis elements 
\begin{align*}
&\{ {_{AC}}(S^-_t)^{B}, {_{BC}}(W_t)^{B}, {_{AC}}(S^-_b)^{B}, {_{AB}}(E_b)^{B}, \\
&{_{BC}}(W_b)^{B},  {_{AC}}(S^+_b)^{B}, {_{AB}}(E_t)^{B}, {_{AC}}(S^+_t)^{B} \},
\end{align*}
where the idempotents are indicated as above. Label these basis elements, in the given order, as $(1)$ through $(8)$. The $DD$ operation $\delta^1$ is defined by:
\begin{itemize}
\item $\delta^1((1)) = (R_1 U_2 \otimes C_1 C_2) \otimes (2) + (L_2 U_1 \otimes C_1 C_2) \otimes (4)$
\newline $+ (R_1 \otimes U_1 C_2) \otimes (5) + (U_1 \otimes U_1 C_2) \otimes (6) + (L_2 \otimes C_1 U_2) \otimes (7)$
\item $\delta^1((2)) = (L_1 \otimes \Ib_{B}) \otimes (1) + (L_1 L_2 \otimes C_1 C_2) \otimes (7) + (L_1 \otimes U_1 C_2) \otimes (8)$
\item $\delta^1((3)) = (U_2 \otimes \Ib_{B}) \otimes (1) + (R_1 U_2 \otimes C_1 C_2) \otimes (5)$
\newline $+ (U_1 U_2 \otimes C_1 C_2) \otimes (6) + (U_2 \otimes C_1 U_2) \otimes (8)$
\item $\delta^1((4)) = (R_2 \otimes \Ib_{B}) \otimes (1) + (R_2 R_1 \otimes C_1 C_2) \otimes (5)$
\newline $+ (R_2 U_1 \otimes C_1 C_2) \otimes (6) + (R_2 \otimes C_1 U_2) \otimes (8)$
\item $\delta^1((5)) = (U_2 \otimes \Ib_{B}) \otimes (2) + (L_1 \otimes \Ib_{B}) \otimes (3) + (L_1 U_2 \otimes C_1 C_2) \otimes (8)$
\item $\delta^1((6)) = (\Ib_{AC} \otimes \Ib_{B}) \otimes (3) + (L_2 \otimes \Ib_{B}) \otimes (4)$
\item $\delta^1((7)) =  (R_2 R_1 \otimes \Ib_{B}) \otimes (2) + (U_1 \otimes \Ib_{B}) \otimes (4) + (R_2 U_1 \otimes C_1 C_2) \otimes (8)$
\item $\delta^1((8)) = (R_1 \otimes \Ib_{B}) \otimes (5) + (U_1 \otimes \Ib_{B}) \otimes (6) + (L_2 \otimes 1) \otimes (7)$.
\end{itemize}
As mentioned above, the $DD$ bimodule $\X^{DD}$ is defined as
\[
\X^{DD} := ({^{\B(2,0),\B^!(2,3)}}\X^{DD}) \oplus {(^{\B(2,1),\B^!(2,2)}}\X^{DD}) \oplus ({^{\B(2,2),\B^!(2,1)}}\X^{DD}).
\]
\end{definition}

\begin{figure}
	\includegraphics[scale=0.625]{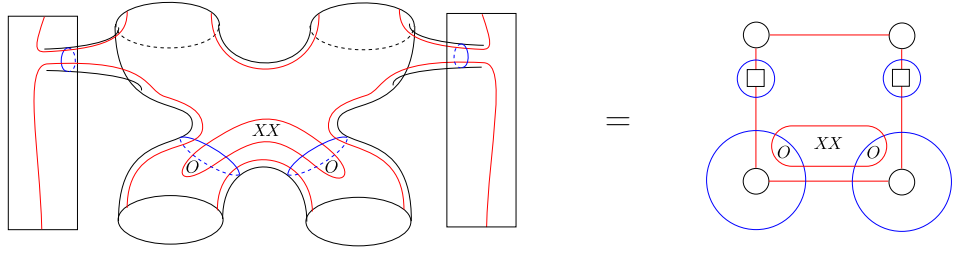}
	\caption{Stabilized and bordered-sutured version of the singular piece from Figure~\ref{fig:SingularDiag}. The diagram on the right is drawn in $S^2$.}
	\label{fig:StabilizedSingularDiag}
\end{figure}

\begin{figure}
	\includegraphics[scale=0.625]{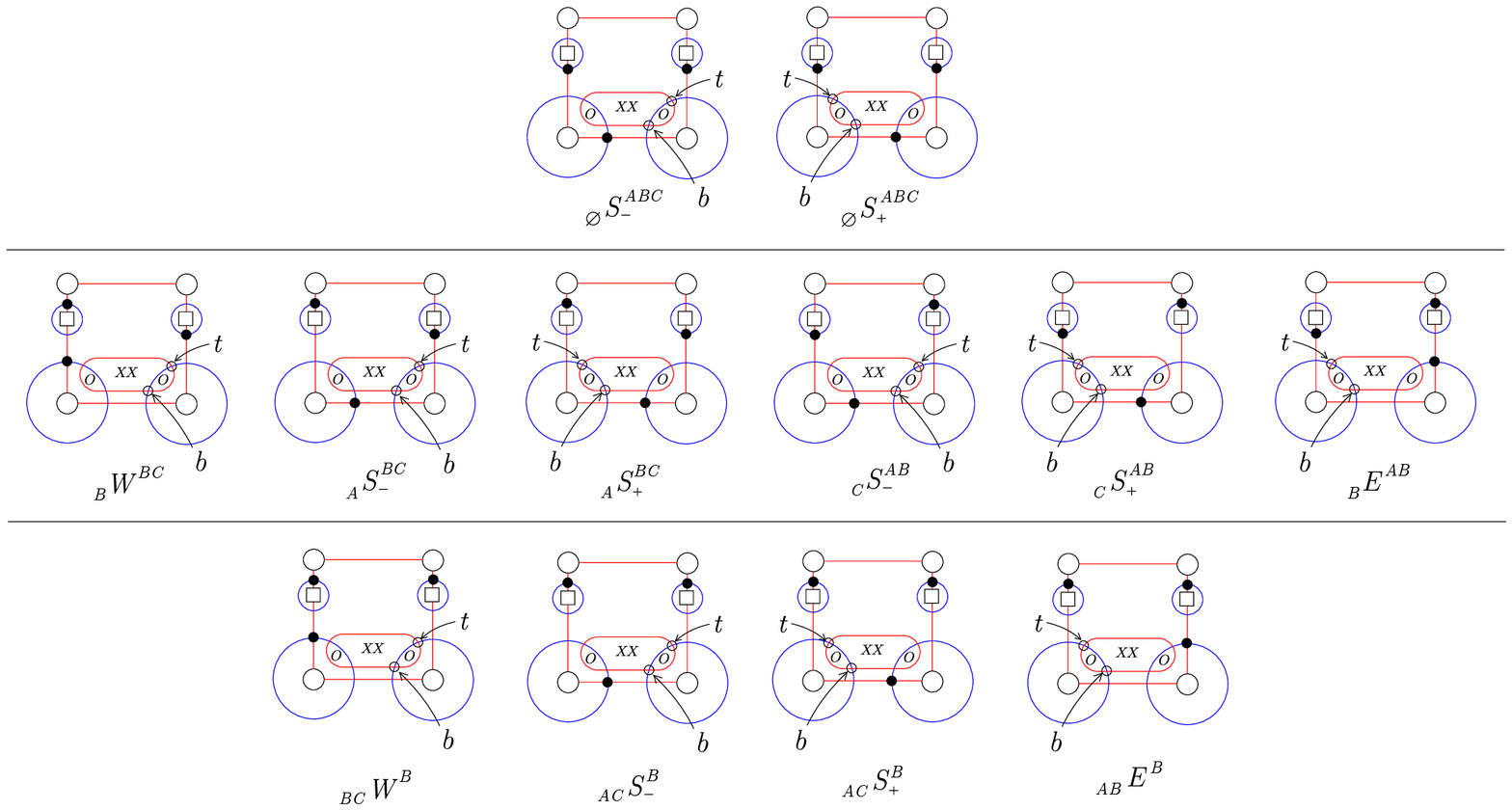}
	\caption{Top row: basis elements of ${^{\B(2,0),\B^!(2,3)}}\X^{DD}$. Middle row: basis elements of ${^{\B(2,1),\B^!(2,2)}}\X^{DD}$. Bottom row: basis elements of ${^{\B(2,2),\B^!(2,1)}}\X^{DD}$.}
	\label{fig:GensAsIntPts}
\end{figure}

\begin{figure}
	\includegraphics[scale=0.625]{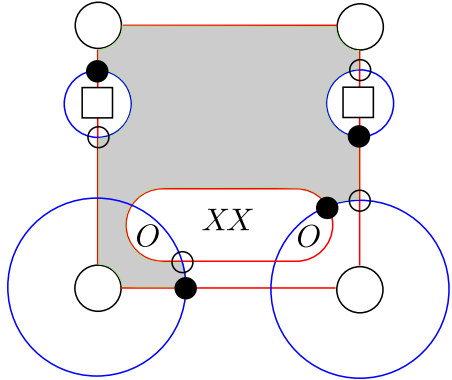}
	\caption{Domain giving rise to the term $(R_1 \otimes L_1 L_2) \otimes {_B}E_b^{AB}$ of $\delta^1({_A}(S^-_t)^{BC})$. The basis element ${_A}(S^-_t)^{BC}$ is drawn using solid dots and the basis element ${_B}E_b^{AB}$ is drawn using hollow dots.}
	\label{fig:DomainExample}
\end{figure}

\begin{remark}
The basis elements and terms of $\delta^1$ in Definition~\ref{def:XDD}, including the idempotent data, are motivated by applying the ideas of bordered sutured Floer homology \cite{Zarev} to a stabilized and bordered sutured version of the Heegaard diagram from Figure~\ref{fig:SingularDiag}. See Figure~\ref{fig:StabilizedSingularDiag} for an illustration of the diagram. The black circles and horizontal segments of squares in the diagram should be interpreted as the ``bordered'' portion of the boundary; the vertical segments of squares are the ``sutured'' portion. Note that bordered sutured Floer homology for Heegaard diagrams with closed circles in their bordered boundary has not yet been defined in generality; Ozsv{\'a}th--Szab{\'o}'s algebras and bimodules are not covered by Zarev's constructions, although they should be covered by a generalization of it. See \cite{MMW2} for more discussion on this topic.

The Heegaard diagram in question (except for the corners defining the structure as a bordered sutured Heegaard diagram) is also one of the diagrams considered in \cite{ManionDiagrams}. See \cite[Figures 14, 17]{ManionDiagrams}, as well as \cite[Figure 18]{ManionDiagrams} for an explanation of how to go between the two ways of drawing the diagram. A similar stabilized diagram motivated the idempotent structure of Ozsv{\'a}th--Szab{\'o}'s theory. 

Figure~\ref{fig:GensAsIntPts} shows the basis elements of each of the three summands of $\X^{DD}$ in terms of sets of intersection points in the diagram of Figure~\ref{fig:StabilizedSingularDiag}. The $DD$ operation $\delta^1$ on $\X^{DD}$ is motivated by counting holomorphic disks in the Heegaard diagram of Figure~\ref{fig:SingularDiag} analogously to how Ozsv{\'a}th--Szab{\'o} count disks in their local Heegaard diagrams. As in \cite{OSzNew, OSzNewer}, we do not prove results about holomorphic geometry in this paper, but one can still identify terms of $\delta^1$ with domains in the Heegaard diagram and try to apply reasonable counting rules. When doing this, note that to get the right answers, the orientation of the Heegaard surface should be the reverse of its usual orientation (so that a small circle oriented clockwise on the ``front face'' of the surface bounds a positive region). See Figure~\ref{fig:DomainExample} for an illustration of a domain representing one term of the $DD$ operation $\delta^1$ on $X^{DD}$.
\end{remark}

\begin{remark}
The labels ``$t$'' or ``$b$'' in the names of the basis elements of $\X^{DD}$ indicate that the corresponding set of intersection points in Figure~\ref{fig:GensAsIntPts} includes the top or bottom open-circled intersection point, respectively.
\end{remark}

\begin{remark}
In \cite{OSSz}, the Heegaard diagram in Figure~\ref{fig:StabilizedSingularDiag} would represent a singular crossing with two upward-pointing strands. However, an inspection of the local Alexander and Maslov gradings for nonsingular crossings in \cite{OSSz} reveals that to be compatible with the theory of \cite{OSzNew}, one must reverse the orientations on all strands in \cite{OSSz} (or some other equivalent change of conventions). Thus, in the context of the Kauffman-states functor, we take the Heegaard diagram of Figure~\ref{fig:StabilizedSingularDiag} to represent a singular crossing with two downward-pointing strands.
\end{remark}

 \begin{figure}
	\includegraphics[scale=0.7]{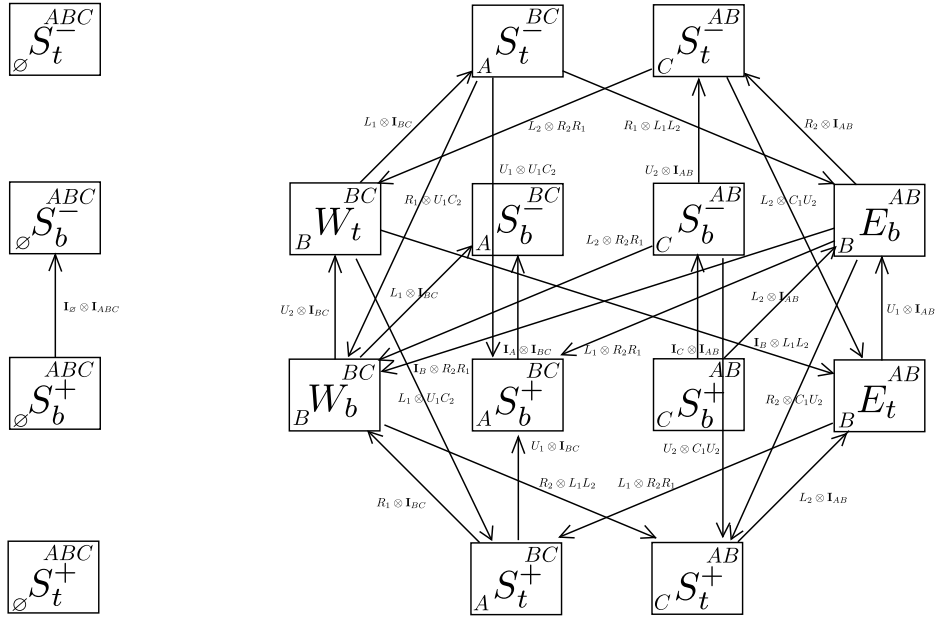}
	\caption{The first two summands of the (unsimplified) local $DD$ bimodule $\X^{DD}$ for a singular point.}
	\label{fig:UnsimplifiedLocalBimod12}
\end{figure}

\begin{figure}
	\includegraphics[scale=0.7]{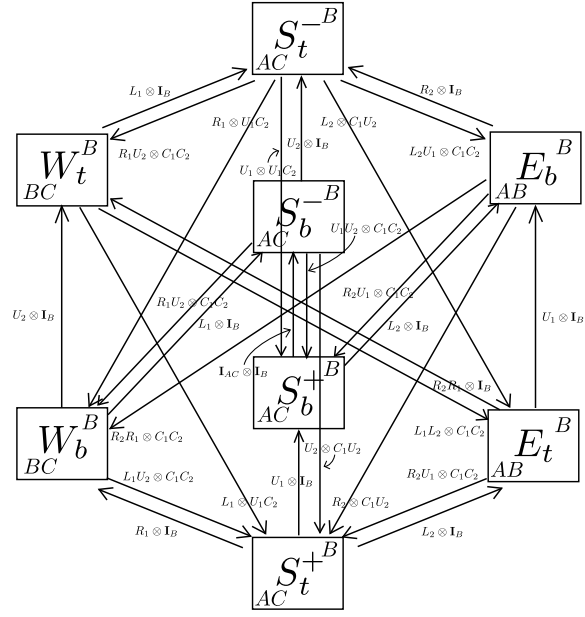}
	\caption{The third summand of the (unsimplified) local $DD$ bimodule $\X^{DD}$ for a singular point.}
	\label{fig:UnsimplifiedLocalBimod3}
\end{figure}

Figure~\ref{fig:UnsimplifiedLocalBimod12} shows the summands ${^{\B(2,0),\B^!(2,3)}}\X^{DD}$ and ${^{\B(2,1),\B^!(2,2)}}\X^{DD}$ of $\X^{DD}$ in the graphical notation of Section~\ref{sec:Graphical}. Figure~\ref{fig:UnsimplifiedLocalBimod3} shows ${^{\B(2,2),\B^!(2,1)}}\X^{DD}$; there is no summand of $\X^{DD}$ over $(\B(2,3),\B^!(2,0))$. 

The vertices in these graphs are labeled with the names of the basis elements and their first and second idempotents (lower-left and upper-right corners respectively). The numbering of the basis elements above corresponds to reading each row of these figures from right to left, and reading the rows from top to bottom. One can check compatibility of the idempotent data; we will define the grading data below. We will delay verifying the $DD$ relations; they will be deduced from the $DA$ relations in Section~\ref{sec:LocalDA}.

\subsection{Simplifying the local \texorpdfstring{$DD$}{DD} bimodule}
The edges labeled $\Ib_{\varnothing} \otimes \Ib_{ABC}$, $\Ib_{A} \otimes \Ib_{BC}$, $\Ib_{C} \otimes \Ib_{AB}$, and $\Ib_{AC} \otimes \Ib_B$ in Figure~\ref{fig:UnsimplifiedLocalBimod12} and Figure~\ref{fig:UnsimplifiedLocalBimod3} indicates the presence of cancellable pairs in $\X^{DD}$.

\begin{proposition}
$\X^{DD}$ is homotopy equivalent to the $DD$ bimodule $\Xt^{DD}$ defined graphically in Figure~\ref{fig:SimplifiedLocalBimod12} and Figure~\ref{fig:SimplifiedLocalBimod3}.
\end{proposition}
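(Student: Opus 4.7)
The plan is to apply the homological-perturbation cancellation procedure (Definition~2.20 and Proposition~2.22) once for each of the four identity-labeled edges highlighted in the statement. From Definition~4.1, the cancellable pairs are: $(S^+_b, S^-_b)$ in the first summand (from $\delta^1(S^+_b) = (\Ib_\varnothing\otimes\Ib_{ABC})\otimes S^-_b$); $({_A}(S^+_b)^{BC}, {_A}(S^-_b)^{BC})$ and $({_C}(S^+_b)^{AB}, {_C}(S^-_b)^{AB})$ in the second summand; and $({_{AC}}(S^+_b)^B, {_{AC}}(S^-_b)^B)$ in the third summand. These four pairs occupy pairwise disjoint pairs of primitive idempotents in $\X^{DD}$, so the cancellations do not interact and can be carried out in any order.

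Before invoking Proposition~2.22 I would check validity. For each pair $(x,y)$, a direct scan of Definition~4.1 confirms that the only edge $x \to y$ in the directed graph of $\X^{DD}$ is the identity edge being cancelled; no non-identity edge runs from $x$ to $y$. Consequently every zig-zag pattern $z \xrightarrow{a_1} y \xleftarrow{\Ib} x \xrightarrow{a_2} y \xleftarrow{\Ib} \cdots \xrightarrow{a_n} w$ from Definition~2.20 is forced to have $n = 2$, producing at most one new edge per composable pair $(z\to y,\, x\to w)$. Since $\X^{DD}$ is finitely generated, only finitely many new edges arise, so each pair is valid and Proposition~2.22 applies.

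Applying Proposition~2.22 in turn to each of the four pairs yields a $DD$ bimodule $\X'^{DD}$ homotopy equivalent to $\X^{DD}$; the remaining task is to identify $\X'^{DD}$ with $\Xt^{DD}$. Concretely, for each pair I would delete $x$, $y$, and the edges adjacent to them, and for every basic zig-zag $z \to y \leftarrow x \to w$ I would insert an edge $z \to w$ whose label is the tensor product of the two outer labels, simplified using the quiver relations of Definition~3.1 (for example $L_i R_i = U_i$, $R_i R_{i+1} = 0$, and the vanishing of $U_i$ at certain I-states). Comparing the resulting labelled graph to Figures~4.4 and~4.5 then completes the argument.

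The main obstacle is this last identification step. The third summand ${^{\B(2,2),\B^!(2,1)}}\X^{DD}$ in particular carries edges with long algebra labels (such as $L_1 L_2\otimes C_1 C_2$ and $R_2 R_1\otimes C_1 C_2$), and one must enumerate every compatible pair $(z\to y,\, x\to w)$ passing through each cancelled identity edge and carefully reduce the resulting products using the algebra relations, keeping track of which summands are produced and which are killed by the idempotent compatibility constraints. I do not foresee any conceptual difficulty beyond the patient bookkeeping required to match every new edge with those drawn in $\Xt^{DD}$.
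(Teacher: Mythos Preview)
Your approach is correct and matches the paper's: apply the cancellation procedure of Section~\ref{sec:HowToSimplify} to the four identity-labeled edges, verify validity, and identify the result with the figures. One small point: having pairwise distinct idempotent pairs does not by itself guarantee that sequential cancellations commute (edges can connect generators with different idempotents); the cleaner justification is that three of the pairs lie in separate summands over different algebras, and within the second summand neither cancellation produces any new zig-zag edges, so the order is immaterial.
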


\begin{proof}
This claim follows from Section~\ref{sec:HowToSimplify}; the cancellable pairs are valid, and Figure~\ref{fig:SimplifiedLocalBimod12} and Figure~\ref{fig:SimplifiedLocalBimod3} are obtained from Figure~\ref{fig:UnsimplifiedLocalBimod12} and Figure~\ref{fig:UnsimplifiedLocalBimod3} as described in Definition~\ref{def:DDCancelling}.
\end{proof}

\begin{figure}
	\includegraphics[scale=0.7]{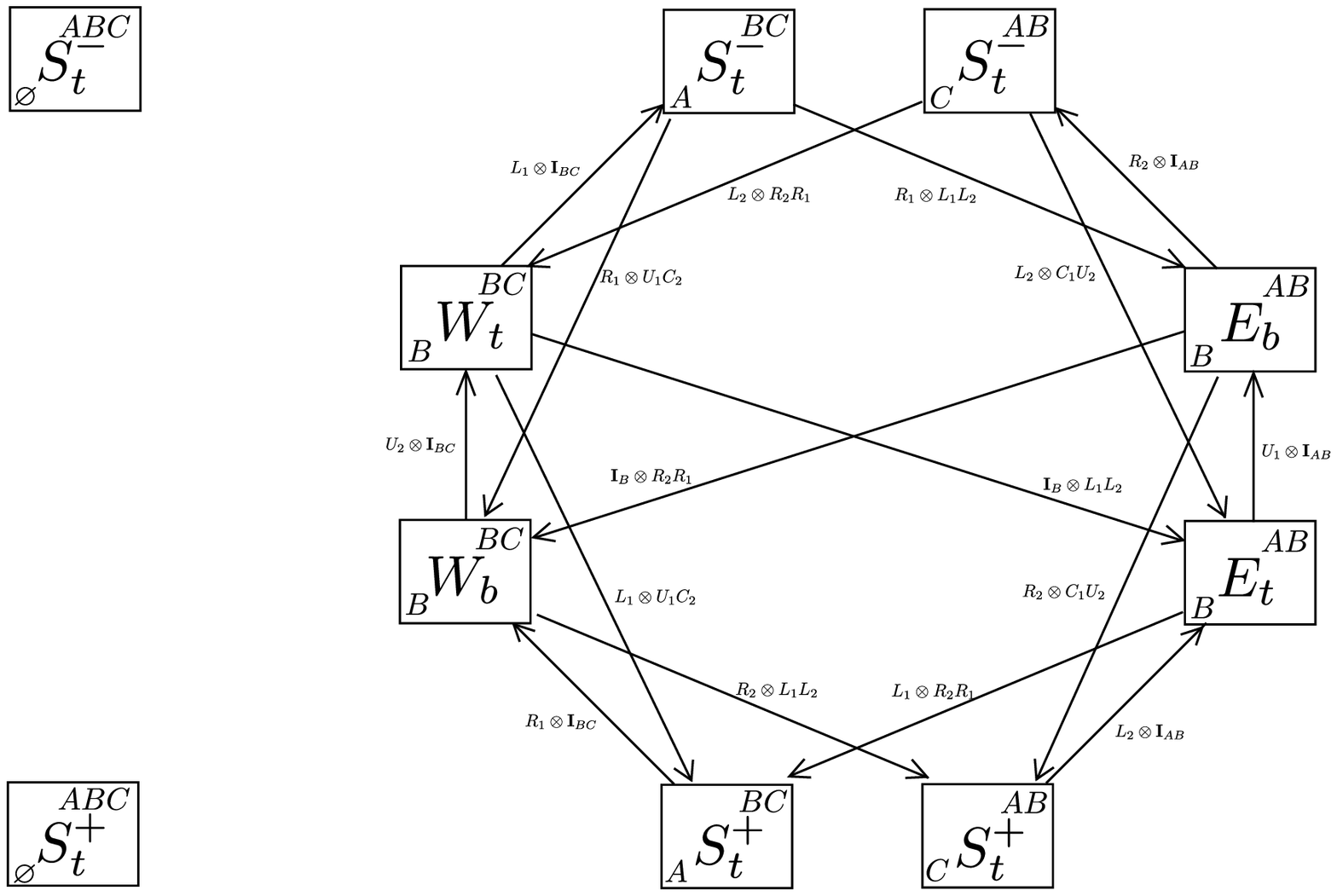}
	\caption{The first two summands of the simplified local $DD$ bimodule $\Xt^{DD}$ for a singular point.}
	\label{fig:SimplifiedLocalBimod12}
\end{figure}

\begin{figure}
	\includegraphics[scale=0.7]{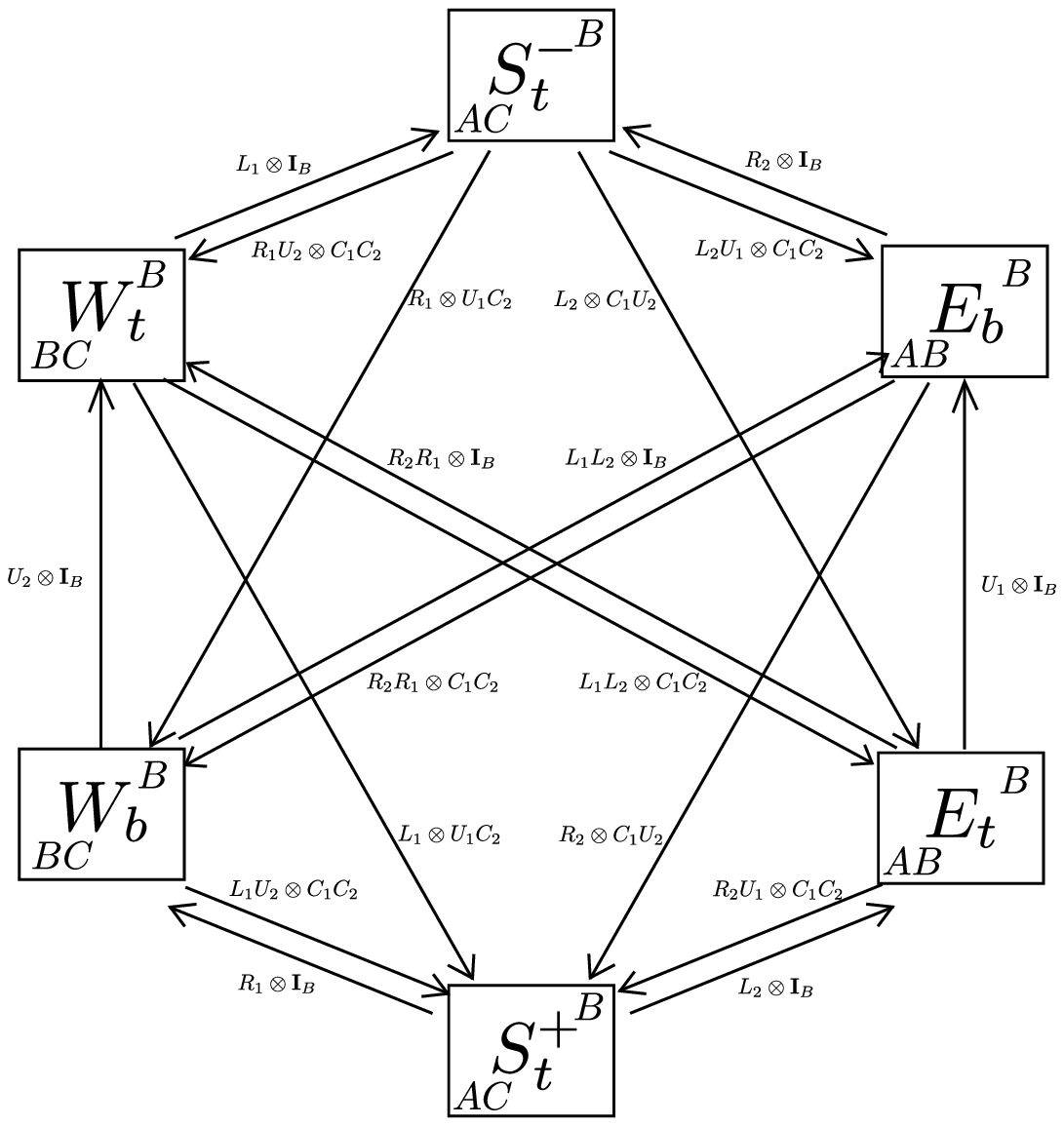}
	\caption{The third summand of the simplified local $DD$ bimodule $\Xt^{DD}$ for a singular point.}
	\label{fig:SimplifiedLocalBimod3}
\end{figure}

\subsection{Symmetries in the local \texorpdfstring{$DD$}{DD} bimodule}\label{sec:DDSymm}

The simplified $DD$ bimodule $\Xt^{DD}$ has two symmetries corresponding to the $\Rc$ and $o$ symmetries on the algebra. We define $\Rc\co \Xt^{DD} \to \Xt^{DD}$ by
\begin{itemize}
\item ${_{\varnothing}}(S^-_t)^{ABC} \leftrightarrow {_{\varnothing}}(S^-_t)^{ABC}$
\item ${_{\varnothing}}(S^+_t)^{ABC} \leftrightarrow {_{\varnothing}}(S^+_t)^{ABC}$
\item ${_{A}}(S^-_t)^{BC} \leftrightarrow {_{C}}(S^-_t)^{AB}$
\item ${_{B}}(W_t)^{BC} \leftrightarrow {_{B}}(E_b)^{AB}$
\item ${_{B}}(W_b)^{BC} \leftrightarrow {_{B}}(E_t)^{AB}$
\item ${_{A}}(S^+_t)^{BC} \leftrightarrow {_{C}}(S^+_t)^{AB}$
\item ${_{AC}}(S^-_t)^{B} \leftrightarrow {_{AC}}(S^-_t)^{B}$
\item ${_{BC}}(W_t)^{B} \leftrightarrow {_{AB}}(E_b)^{B}$
\item ${_{BC}}(W_b)^{B} \leftrightarrow {_{AB}}(E_t)^{B}$
\item ${_{AC}}(S^+_t)^{B} \leftrightarrow {_{AC}}(S^+_t)^{B}$.
\end{itemize}
The $DD$ operation $\delta^1$ on $\Xt^{DD}$ is compatible with $\Rc$ in the sense that the square
\[
\xymatrix{ \Xt^{DD} \ar[d]_{\delta^1} \ar[r]^{\Rc} & \Xt^{DD} \ar[d]^{\delta^1} \\
(\B(2) \otimes \B^!(2)) \otimes_{\I(2) \otimes \I(2)} \Xt^{DD} \ar[r]_{\Rc \otimes \Rc} & (\B(2) \otimes \B^!(2)) \otimes_{\I(2) \otimes \I(2)} \Xt^{DD}
}
\]
commutes, where $\Rc$ acting on $\B(2) \otimes \B^!(2)$ is the tensor product of $\Rc$ acting on each factor. Equivalently, the map $\Rc$ is a $DD$ bimodule isomorphism from $\Xt^{DD}$ to $\Ind_{\Rc} \Xt^{DD}$, where $\Ind_{\Rc} \Xt^{DD}$ is the $DD$ bimodule obtained from $\Xt^{DD}$ by modifying the action of $\I(2) \otimes \I(2)$ by $\Rc$ and applying $\Rc$ to the algebra outputs of $\delta^1$ (see \cite[Section 2.4.2]{LOTBimodules} for more details on induction and restriction in this context). Graphically, Figure~\ref{fig:SimplifiedLocalBimod12} and Figure~\ref{fig:SimplifiedLocalBimod3} are unchanged by reflecting about the vertical axis of the graph for each summand while applying $\Rc$ to each tensor factor of each algebra label. 

For the second symmetry $o$ on $\Xt^{DD}$, we define:
\begin{itemize}
\item ${_{\varnothing}}(S^-_t)^{ABC} \leftrightarrow {_{\varnothing}}(S^+_t)^{ABC}$
\item ${_{A}}(S^-_t)^{BC} \leftrightarrow {_{A}}(S^+_t)^{BC}$
\item ${_{C}}(S^-_t)^{AB} \leftrightarrow {_{C}}(S^+_t)^{AB}$
\item ${_{B}}(W_t)^{BC} \leftrightarrow {_{B}}(W_b)^{BC}$
\item ${_{B}}(E_b)^{AB} \leftrightarrow {_{B}}(E_t)^{AB}$
\item ${_{AC}}(S^-_t)^{B} \leftrightarrow {_{AC}}(S^+_t)^{B}$
\item ${_{BC}}(W_t)^{B} \leftrightarrow {_{BC}}(W_b)^{B}$
\item ${_{AB}}(E_b)^{B} \leftrightarrow {_{AB}}(E_t)^{B}$.
\end{itemize}
This symmetry corresponds to reflection across the horizontal axes in Figure~\ref{fig:SimplifiedLocalBimod12} and Figure~\ref{fig:SimplifiedLocalBimod3}. Let the above correspondence define $o\co \Xt^{DD} \to (\Xt^{DD})^{\vee}$ (under the natural identification of basis and dual basis elements), where $(\Xt^{DD})^{\vee}$ is the dual of $\Xt^{DD}$ as defined in Section~\ref{sec:DualsOfModules}. The $DD$ operation $\delta^1$ on $\Xt^{DD}$ is compatible with $o$ in the sense that the square
\[
\xymatrix{ \Xt^{DD} \ar[r]^{o} \ar[d]_{\delta^1} & (\Xt^{DD})^{\vee} \ar[d]^{(\delta_1)^{\vee}} \\
(\B(2) \otimes \B^!(2)) \otimes_{\I(2) \otimes \I(2)} \Xt^{DD} \ar[r]_-{o \otimes o} & (\B(2) \otimes \B^!(2))^{\op} \otimes_{\I(2) \otimes \I(2)} (\Xt^{DD})^{\vee}
}
\]
commutes, where $o$ acting on $(\B(2) \otimes \B^!(2))^{\op} \cong \B(2)^{\op} \otimes (\B^!(2))^{\op}$ is the tensor product of $o$ acting on each factor. Equivalently, the map $o$ is a $DD$ bimodule isomorphism from $\Xt^{DD}$ to $\Ind_{o} (\Xt^{DD})^{\vee}$. Graphically, Figure~\ref{fig:SimplifiedLocalBimod12} and Figure~\ref{fig:SimplifiedLocalBimod3} are unchanged by reflecting about the horizontal axis of the graph for each summand while applying $o$ to each tensor factor of each algebra label and reversing the directions of the edges.

\begin{remark}
Note that $o$ and $\Rc$ commute. Their composition $\Rc o$ corresponds to rotating Figures~\ref{fig:SimplifiedLocalBimod12} and \ref{fig:SimplifiedLocalBimod3} by $180^{\circ}$, applying $\Rc o$ to each tensor factor of each algebra label, and reversing the directions of the edges. This composite symmetry may be realized even on the unsimplified $DD$ bimodule $\X^{DD}$. Figures~\ref{fig:UnsimplifiedLocalBimod12} and \ref{fig:UnsimplifiedLocalBimod3} are symmetric under $180^{\circ}$ rotation (changing the edges as specified), although not under reflection across either vertical or horizontal axes.
\end{remark}

\subsection{Gradings}\label{sec:LocalDDGradings}

\begin{definition}\label{def:HomGrLocalDD}
The homological (or Maslov) grading on $\X^{DD}$ is defined as follows:
\begin{itemize}
\item $\m(S^-_t) = 1$
\item $\m(W_t) = 0$
\item $\m(S^-_b) = 0$
\item $\m(E_b) = 0$
\item $\m(W_b) = -1$
\item $\m(S^+_b) = -1$
\item $\m(E_t) = -1$
\item $\m(S^+_t) = -2$
\end{itemize}
\end{definition}

These homological degrees do not depend on the idempotents of the basis elements of $\X^{DD}$. The rows of Figures \ref{fig:UnsimplifiedLocalBimod12}, \ref{fig:SimplifiedLocalBimod12}, and \ref{fig:SimplifiedLocalBimod3} from top to bottom correspond to homological degrees $1$ through $-2$ respectively. The same is approximately true in Figure~\ref{fig:UnsimplifiedLocalBimod3}, although the middle two basis elements are shifted a bit for spacing reasons.

We will define two versions of the unrefined grading on $\X^{DD}$. The first version is a grading by $\Z^{4}$ with basis $\tau_1, \tau_2, \beta_1, \beta_2$.
\begin{definition}\label{def:Ref1LocalDD}
The first unrefined grading on $\X^{DD}$ is defined as follows:
\begin{itemize}
\item $\deg^{\un}_1(S^-_t) = 0$
\item $\deg^{\un}_1(W_t) = \beta_1$
\item $\deg^{\un}_1(S^-_b) = \tau_2 + \beta_2$
\item $\deg^{\un}_1(E_b) = \tau_2$
\item $\deg^{\un}_1(W_b) = \tau_2 + \beta_1 + \beta_2$
\item $\deg^{\un}_1(S^+_b) = \tau_2 + \beta_2$
\item $\deg^{\un}_1(E_t) = \tau_1 + \tau_2 + \beta_1$
\item $\deg^{\un}_1(S^+_t) = \tau_1 + \tau_2 + \beta_1 + \beta_2$.
\end{itemize}
\end{definition}

\begin{proposition}\label{prop:FirstTwoGradingFacts}
The graphs in Figure~\ref{fig:UnsimplifiedLocalBimod12} and Figure~\ref{fig:UnsimplifiedLocalBimod3} are compatible with the homological and first unrefined gradings. If we let the $DD$ bimodule symmetry $\Rc$ from Section~\ref{sec:DDSymm} act trivially on the homological grading group $\Z$, then for each basis element $x$ of $\Xt^{DD}$, we have $\m(\Rc(x)) = \m(x)$ and $\deg^{\un}_1(\Rc(x)) = \Rc(\deg^{\un}_1(x))$.

For the second symmetry $o$, we have $\m(o(x)) = -\m(x) - 1$ for any basis element $x$ of $\Xt^{DD}$. We have $\deg^{\un}_1(o(x)) = \tau_1 + \tau_2 + \beta_1 + \beta_2 - o(\deg(x))$.
\end{proposition}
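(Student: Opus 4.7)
The plan is to prove all three claims by direct verification from the explicit data, exploiting the observation that the grading assignments on $\X^{DD}$ depend only on the name of the basis element, not on its idempotent decorations. First I would address grading compatibility. For each directed edge $x \xrightarrow{a \otimes a'} y$ in Figures~\ref{fig:UnsimplifiedLocalBimod12} and \ref{fig:UnsimplifiedLocalBimod3}, the two conditions to check are
\[
\m(a) + \m(a') + \m(y) = \m(x) + 1, \qquad \deg^{\un}_1(a) + \deg^{\un}_1(a') + \deg^{\un}_1(y) = \deg^{\un}_1(x),
\]
the $+1$ reflecting our convention that $\delta^1$ shifts by $\lambda = (1,0)$. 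Using the homological and unrefined Alexander degrees of the algebra generators from Section~\ref{sec:AlgGrading} (so $R_i, L_i, C_i$ have Maslov degree $1$ and $U_i$ has Maslov degree $2$ in $\B^!$, while $\B$ lives in Maslov degree $0$), together with Definitions~\ref{def:HomGrLocalDD} and \ref{def:Ref1LocalDD}, each check reduces to a short arithmetic identity. Many edges share the same pair of basis-element names and the same shape of algebra output (e.g.\ both terms of $\delta^1({_A}(S^-_t)^{BC})$ going to $E_b$- and $W_b$-type vertices), so one representative computation per family of edges suffices; the remaining cases are automatic since idempotent decorations do not enter grading.

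For the $\Rc$ statement on $\Xt^{DD}$, I would tabulate the involution listed in Section~\ref{sec:DDSymm}. Most swaps pair basis elements with the same name, for which Maslov invariance is immediate, while the pairs $(W_t, E_b)$ and $(W_b, E_t)$ mix names; Definition~\ref{def:HomGrLocalDD} gives $\m(W_t) = 0 = \m(E_b)$ and $\m(W_b) = -1 = \m(E_t)$, confirming $\m(\Rc(x)) = \m(x)$. For the unrefined grading, applying the involution $\Rc\colon \tau_i \leftrightarrow \beta_{2-i}$, $\beta_i \leftrightarrow \tau_{2-i}$ on $\Z^4$ to each $\deg^{\un}_1(x)$ from Definition~\ref{def:Ref1LocalDD} and comparing with $\deg^{\un}_1(\Rc(x))$ is a short case-by-case check (for instance $\Rc(\deg^{\un}_1(W_t)) = \Rc(\beta_1) = \tau_1 = \deg^{\un}_1(E_b)$, using $n = 2$).

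For the $o$ statement, I would first verify that paired basis elements in $\Xt^{DD}$ have Maslov degrees summing to $-1$: $\m(S^-_t) + \m(S^+_t) = 1 + (-2) = -1$, and $\m(W_t) + \m(W_b) = \m(E_b) + \m(E_t) = 0 + (-1) = -1$. For the unrefined grading I would apply $o\colon \tau_i \leftrightarrow \beta_i$ to $\deg^{\un}_1(x)$ and check that the sum with $\deg^{\un}_1(o(x))$ equals $\tau_1 + \tau_2 + \beta_1 + \beta_2$ in each case; for example $o(\deg^{\un}_1(W_t)) + \deg^{\un}_1(W_b) = \tau_1 + (\tau_2 + \beta_1 + \beta_2) = \tau_1 + \tau_2 + \beta_1 + \beta_2$, and similarly for the remaining pairs.

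The main obstacle is not conceptual but purely organizational: Figures~\ref{fig:UnsimplifiedLocalBimod12} and \ref{fig:UnsimplifiedLocalBimod3} contain many edges, so one must group them efficiently to avoid redundant computation. Since neither the grading on a basis element nor the grading on an algebra generator depends on any choice beyond its name and the fixed conventions, and since algebra output labels in the $\delta^1$ terms of Definition~\ref{def:XDD} are built from a small common vocabulary of monomials in $R_i, L_i, U_i, C_i$, the proof ultimately collapses to a tabular verification that I would organize by summand of $\X^{DD}$ and by the pair of vertex names at each edge's endpoints.
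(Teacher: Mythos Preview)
Your approach is correct and is essentially the same as the paper's: the paper simply says the first claim ``follows from inspection'' of the unsimplified figures and the second ``follows from inspection'' of the simplified figures, i.e.\ exactly the direct tabular verification you outline in more detail.

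One small slip to fix: in your $\Rc$ example you write $\Rc(\beta_1)=\tau_1=\deg^{\un}_1(E_b)$, but Definition~\ref{def:Ref1LocalDD} gives $\deg^{\un}_1(E_b)=\tau_2$, not $\tau_1$. The correct involution on the grading group for $n=2$ swaps $\tau_1\leftrightarrow\beta_2$ and $\tau_2\leftrightarrow\beta_1$ (matching $R_1\leftrightarrow L_2$, $R_2\leftrightarrow L_1$ on the algebra), so $\Rc(\beta_1)=\tau_2$ and the check goes through. Your $o$ example is fine as written.
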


\begin{proof}
The first claim follows from inspection of Figure~\ref{fig:UnsimplifiedLocalBimod12} and Figure~\ref{fig:UnsimplifiedLocalBimod3}. The second claim follows from inspection of Figure~\ref{fig:SimplifiedLocalBimod12} and Figure~\ref{fig:SimplifiedLocalBimod3}.
\end{proof}

\begin{definition}\label{def:Ref2LocalDD}
The second unrefined grading $\deg^{\un}_2$ on $\X^{DD}$ is a grading by $(\frac{1}{4}\Z)^{2}$ obtained by subtracting $3\frac{\tau_1 + \tau_2 + \beta_1 + \beta_2}{4}$ from the first unrefined degrees.
\end{definition}

\begin{proposition}
If $x$ is a basis element of $\Xt^{DD}$, we have $\deg^{\un}_2(\Rc(x)) = \Rc(\deg^{\un}_2(x))$ and $\deg^{\un}_2(o(x)) = -\frac{\tau_1 + \tau_2 + \beta_1 + \beta_2}{2} - o(\deg^{\un}_2(x))$.
\end{proposition}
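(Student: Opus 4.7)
The plan is to deduce both identities directly from Proposition~\ref{prop:FirstTwoGradingFacts} together with a single invariance observation. Let me abbreviate $T := \tau_1 + \tau_2 + \beta_1 + \beta_2$; by Definition~\ref{def:Ref2LocalDD} we have $\deg^{\un}_2(x) = \deg^{\un}_1(x) - \tfrac{3T}{4}$ for every basis element $x$ of $\Xt^{DD}$, so the proposition reduces to manipulating the shift term correctly under each symmetry.

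The key preliminary step is to verify that $\Rc(T) = o(T) = T$. From the explicit formulas in Section~\ref{sec:AlgSymm}, the map $\Rc$ on $\Z^{2n}$ interchanges the $\tau$-generators with the $\beta$-generators in an index-reversing fashion, while $o$ swaps $\tau_i$ with $\beta_i$; in either case the sum of all four generators (in the $n = 2$ case) is fixed, so $T$ is invariant under both grading-group symmetries.

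For the $\Rc$-identity I would then compute
\begin{align*}
\deg^{\un}_2(\Rc(x)) &= \deg^{\un}_1(\Rc(x)) - \tfrac{3T}{4} \\
&= \Rc(\deg^{\un}_1(x)) - \Rc(\tfrac{3T}{4}) \\
&= \Rc\bigl(\deg^{\un}_1(x) - \tfrac{3T}{4}\bigr) = \Rc(\deg^{\un}_2(x)),
\end{align*}
using Proposition~\ref{prop:FirstTwoGradingFacts} in the second line and $\Rc$-invariance of $T$ in the third. For the $o$-identity I would chain
\begin{align*}
\deg^{\un}_2(o(x)) &= \deg^{\un}_1(o(x)) - \tfrac{3T}{4} = T - o(\deg^{\un}_1(x)) - \tfrac{3T}{4} \\
&= \tfrac{T}{4} - o(\deg^{\un}_1(x)) = -\tfrac{T}{2} - o\bigl(\deg^{\un}_1(x) - \tfrac{3T}{4}\bigr) \\
&= -\tfrac{T}{2} - o(\deg^{\un}_2(x)),
\end{align*}
again invoking $o(T) = T$ in the penultimate step.

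There is essentially no obstacle here: the statement is a purely formal consequence of Proposition~\ref{prop:FirstTwoGradingFacts}. The only point worth flagging is that the specific shift $-\tfrac{3T}{4}$ in Definition~\ref{def:Ref2LocalDD} is precisely what is needed to convert the $o$-anchor $T$ in the first grading into the symmetric anchor $-\tfrac{T}{2}$ in the second grading, while being a scalar multiple of an element fixed by both $\Rc$ and $o$ so that $\Rc$-equivariance is undisturbed.
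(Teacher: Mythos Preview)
Your argument is correct. The paper actually omits the proof of this proposition entirely, stating the result immediately after Definition~\ref{def:Ref2LocalDD} without justification; your derivation is exactly the implicit reasoning the paper leaves to the reader, namely that the shift by $-\tfrac{3}{4}(\tau_1+\tau_2+\beta_1+\beta_2)$ is by a multiple of an element fixed by both grading-group symmetries, so the identities follow formally from Proposition~\ref{prop:FirstTwoGradingFacts}.
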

The second unrefined grading has the additional advantage that it reduces to Ozsv{\'a}th--Stipsicz--Szab{\'o}'s Alexander grading from \cite{OSSz} (after multiplying homological degrees by $-1$); it should also be more natural from the perspective of categorification. It has the disadvantage that one must work over $(\frac{1}{4}\Z)^{4}$ rather than $\Z^4$.

For convenience, we list the second unrefined degrees of basis elements of $\X^{DD}$. We also list the refined Alexander multi-degrees obtained from the second unrefined degrees by the map $\eta\co \Z^4 \to (\frac{1}{2}\Z)^2$ sending $\tau_1, \beta_1 \mapsto \frac{e_1}{2}$ and $\tau_2, \beta_2 \mapsto \frac{e_2}{2}$ (see Section~\ref{sec:AlgGrading}), and the single Alexander degrees obtained from the refined Alexander multi-degrees from the sum map from $\Z^2$ to $\Z$.
\begin{proposition}
The second unrefined grading on $\X^{DD}$ is:
\begin{itemize}
\item $\deg^{\un}_2(S^-_t) = -3\frac{\tau_1 + \tau_2 + \beta_1 + \beta_2}{4}$
\item $\deg^{\un}_2(W_t) = \frac{-3\tau_1 -3\tau_2 + \beta_1 - 3\beta_2}{4}$
\item $\deg^{\un}_2(S^-_b) = \frac{-3\tau_1 + \tau_2 - 3\beta_1 + \beta_2}{4}$
\item $\deg^{\un}_2(E_b) = \frac{-3\tau_1 + \tau_2 - 3\beta_1 - 3\beta_2}{4}$
\item $\deg^{\un}_2(W_b) = \frac{-3\tau_1 + \tau_2 + \beta_1 + \beta_2}{4}$
\item $\deg^{\un}_2(S^+_b) = \frac{-3\tau_1 + \tau_2 - 3\beta_1 + \beta_2}{4}$
\item $\deg^{\un}_2(E_t) = \frac{\tau_1 + \tau_2 + \beta_1 - 3\beta_2}{4}$
\item $\deg^{\un}_2(S^+_t) = \frac{\tau_1 + \tau_2 + \beta_1 + \beta_2}{4}$.
\end{itemize}
\end{proposition}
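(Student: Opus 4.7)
The plan is to verify the claim by direct computation: the second unrefined grading is defined in Definition~\ref{def:Ref2LocalDD} purely in terms of the first unrefined grading via the formula
\[
\deg^{\un}_2(x) = \deg^{\un}_1(x) - \tfrac{3}{4}(\tau_1 + \tau_2 + \beta_1 + \beta_2),
\]
so there is nothing to prove beyond substituting the eight values listed in Definition~\ref{def:Ref1LocalDD} into this formula and collecting terms over a common denominator of $4$.

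Concretely, I would go through the basis elements $S^-_t, W_t, S^-_b, E_b, W_b, S^+_b, E_t, S^+_t$ in the order given, in each case writing the first unrefined degree with denominator $4$ and then subtracting $3(\tau_1+\tau_2+\beta_1+\beta_2)$ in the numerator. For instance, for $W_t$ one has
\[
\deg^{\un}_2(W_t) = \beta_1 - \tfrac{3}{4}(\tau_1+\tau_2+\beta_1+\beta_2) = \tfrac{-3\tau_1-3\tau_2+\beta_1-3\beta_2}{4},
\]
and the remaining seven cases are analogous. Since the calculations are identical in structure and the degrees of basis elements do not depend on the idempotent decomposition, there is no case analysis across the three summands of $\X^{DD}$ to worry about.

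There is no real obstacle here; the statement is essentially the content of Definition~\ref{def:Ref2LocalDD} applied to Definition~\ref{def:Ref1LocalDD}, and the only thing one could in principle get wrong is an arithmetic slip on one of the eight subtractions. I would simply present the final list and, if desired, include one or two sample computations (such as the $W_t$ case above) to confirm the pattern.
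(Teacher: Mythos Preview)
Your proposal is correct and matches the paper's treatment: the paper states this proposition without proof, as it follows immediately from Definition~\ref{def:Ref2LocalDD} applied to the values in Definition~\ref{def:Ref1LocalDD}, exactly as you describe.
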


\begin{definition}\label{def:BimoduleRefinedAndSingleGr}
The refined grading on $\X^{DD}$ is:
\begin{itemize}
\item $\deg^{\refi}(S^-_t) = -3\frac{e_1 + e_2}{4}$
\item $\deg^{\refi}(W_t) = \frac{-e_1 - 3e_2}{4}$
\item $\deg^{\refi}(S^-_b) = \frac{-3e_1 + e_2}{4}$
\item $\deg^{\refi}(E_b) = \frac{-3e_1-e_2}{4}$
\item $\deg^{\refi}(W_b) = \frac{-e_1 +e_2}{4}$
\item $\deg^{\refi}(S^+_b) = \frac{-3e_1 + e_2}{4}$
\item $\deg^{\refi}(E_t) = \frac{e_1 -e_2}{4}$
\item $\deg^{\refi}(S^+_t) = \frac{e_1 + e_2}{4}$.
\end{itemize}
The single Alexander grading on $\X^{DD}$ is
\begin{itemize}
\item $\Alex(S^-_t) = -\frac{3}{2}$
\item $\Alex(W_t) = -1$
\item $\Alex(S^-_b) = -\frac{1}{2}$
\item $\Alex(E_b) = -1$
\item $\Alex(W_b) = 0$
\item $\Alex(S^+_b) = -\frac{1}{2}$
\item $\Alex(E_t) = 0$
\item $\Alex(S^+_t) = \frac{1}{2}$.
\end{itemize}
\end{definition}

\begin{proposition}
Under the correspondence shown in Figure~\ref{fig:GensAsIntPts} (and after multiplying the homological degrees by $-1$), the homological degrees and single Alexander degrees of the basis elements of $\X^{DD}$ agree with the local degrees of the corresponding types of basis elements listed in \cite[Figures 8 and 9]{OSSz}.
\end{proposition}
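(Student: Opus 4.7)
The plan is to reduce the proposition to a finite case-check, verifying the stated equalities for each of the eight basis-element types $S^-_t, S^-_b, S^+_t, S^+_b, W_t, W_b, E_t, E_b$ (the idempotent data do not affect either $\m$ or $\Alex$, so it suffices to check one representative per type). First I would unpack the correspondence in Figure~\ref{fig:GensAsIntPts}: each basis element is presented as a set of intersection points in the stabilized bordered sutured diagram of Figure~\ref{fig:StabilizedSingularDiag}, and the labels $S, W, E$ together with the subscripts $t/b$ and the superscripts $\pm$ record exactly which of the eight combinatorial types of intersection points near the singular crossing is used. These are precisely the types tabulated in \cite[Figures~8 and 9]{OSSz}.

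Next I would read off, from \cite[Figures~8, 9]{OSSz}, the local Maslov degree and local Alexander degree assigned to each of these eight types, applying the conventions already fixed in the excerpt: (i)~reverse the orientations on the strands, per the remark in Section~\ref{sec:LocalDDDefs} about compatibility with \cite{OSzNew} (equivalently, regard the singular crossing as having two downward-pointing strands), and (ii)~multiply the Maslov degrees from \cite{OSSz} by $-1$, as required by Remark~\ref{rem:HomGr}. This yields eight pairs $(\m,\Alex)$ to be matched.

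Finally I would compare these eight pairs against the values for $\m$ in Definition~\ref{def:HomGrLocalDD} and for $\Alex$ in Definition~\ref{def:BimoduleRefinedAndSingleGr}. The comparison is purely tabular; each of the eight entries should match on the nose. A useful sanity check during the comparison is the internal symmetry already observed: $\m$ is invariant under $\Rc$ and satisfies $\m(o(x))=-\m(x)-1$ (Proposition~\ref{prop:FirstTwoGradingFacts}), and the single Alexander grading exhibits the analogous behavior under the symmetries visible in Figures~\ref{fig:SimplifiedLocalBimod12}--\ref{fig:SimplifiedLocalBimod3}; these symmetries are also present in the local gradings of \cite{OSSz}, so if any row matches then the row obtained by applying $\Rc$ or $o$ must match as well, cutting the amount of explicit checking in half.

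The only genuine obstacle is bookkeeping for conventions: the sign reversal on the Maslov grading, the orientation reversal on strands, the normalization of the Alexander grading (there is a global shift of $-3\tfrac{\tau_1+\tau_2+\beta_1+\beta_2}{4}$ built into $\deg^{\un}_2$ from Definition~\ref{def:Ref2LocalDD} that must be carried through), and the fact that our diagram omits the $O$ basepoints and uses the reversed orientation of the Heegaard surface. Once these choices are reconciled at one basis element, say $S^+_t$ (which sits in the extremal homological and Alexander grading and is easiest to identify in \cite[Figures~8, 9]{OSSz}), the remaining seven checks are straightforward.
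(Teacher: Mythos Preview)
Your plan has a gap in how it handles the $t$/$b$ subscript. The tables in \cite[Figures 8 and 9]{OSSz} do not list eight types: for each of the four corners $W$, $E$, $S_+$, $S_-$ they record only one local degree, the one attained by the higher-Alexander-degree member of the $\{t,b\}$ pair. So you cannot ``read off eight pairs $(\m,\Alex)$'' from those figures; only four are tabulated. Your tabular comparison therefore cannot be completed directly, and the symmetry argument via $\Rc$ and $o$ does not resolve this, since those symmetries exchange $W\leftrightarrow E$ and $S_+\leftrightarrow S_-$ but do not relate the $t$ and $b$ members of a single corner.

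The paper's proof handles this in two steps. First it identifies, for each corner, which of $t$ or $b$ is the higher-degree generator (using \cite[Figure~11]{OSSz}) and checks those four against the four tabulated values. Second, for the remaining four generators it observes that in the Heegaard diagram of Figure~\ref{fig:StabilizedSingularDiag} there is a bigon from the lower-degree generator $x$ to the higher-degree generator $x_0$ crossing a single $O$ basepoint and no other basepoints; the grading rules in \cite{OSSz} (specifically \cite[equation (3)]{OSSz} for Alexander and the discussion on \cite[p.~386]{OSSz} for Maslov) then force $\Alex(x)=\Alex(x_0)-1$ and $\m(x)=\m(x_0)+1$ on the \cite{OSSz} side, exactly matching the shifts in Definitions~\ref{def:HomGrLocalDD} and~\ref{def:BimoduleRefinedAndSingleGr}. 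You should replace the ``purely tabular'' step for the four lower-degree generators by this bigon argument.
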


\begin{proof}
Our basis elements of type $W$, $E$, $S_+$, and $S_-$ correspond to the left corner, right corner, bottom corner labeled $D_+$, and bottom corner labeled $D_-$ respectively in \cite[Figures 8 and 9]{OSSz}. Ozsv{\'a}th--Stipsicz--Szab{\'o} show only the highest Alexander degree among the two types of local basis elements that we call $\{t,b\}$. 

A comparison of \cite[Figure 11]{OSSz} with Figure~\ref{fig:GensAsIntPts} and Definition~\ref{def:BimoduleRefinedAndSingleGr} shows that our highest-degree basis elements agree with Ozsv{\'a}th--Stipsicz--Szab{\'o}'s. By Definition~\ref{def:BimoduleRefinedAndSingleGr} and \cite[Figure 9]{OSSz}, the single Alexander degrees of corresponding highest-degree basis elements are the same; by Definition~\ref{def:HomGrLocalDD} and \cite[Figure 8]{OSSz}, the homological degrees of corresponding basis elements are the same after multiplying by $-1$.

For any other basis element $x$, there is a basis element $x_0$ with ``highest degree'' as above, and such that $\Alex(x) = \Alex(x_0) - 1$ and $\m(x) = \m(x_0) + 1$ as defined here. In the Heegaard diagram of Figure~\ref{fig:StabilizedSingularDiag}, there is a bigon from $x$ to $x_0$ passing through a basepoint labeled $O$ (or $z^j$, $j \in \{1,2\}$ in Ozsv{\'a}th--Stipsicz--Szab{\'o}'s notation) and no other basepoints. As described in \cite[p. 386]{OSSz}, this bigon implies that Ozsv{\'a}th--Stipsicz--Szab{\'o}'s Maslov degree also satisfies $\m(x) = \m(x_0) + 1$ (after the usual multiplication by $-1$). Similarly, by \cite[equation 3]{OSSz}, the bigon implies that Ozsv{\'a}th--Stipsicz--Szab{\'o}'s Alexander degree satisfies $\Alex(x) = \Alex(x_0) - 1$.
\end{proof}

\section{The local \texorpdfstring{$DA$}{DA} bimodule for a singular crossing}\label{sec:LocalDA}

Now we will describe a $DA$ bimodule $\X^{DA}$ over $(\B(2),\B(2))$ representing a local singular crossing. We will have $\X^{DD} \cong \X^{DA} \boxtimes \K$, where $\X^{DD}$ is the local $DD$ bimodule for a singular crossing from Section~\ref{sec:LocalDD} and $\K$ is the canonical $DD$ bimodule over $(\B(2),\B^!(2))$ from Definition~\ref{def:KoszulDualizing}. We will use the same names for I-states and their corresponding idempotents as in Section~\ref{sec:LocalDD}. Like $\X^{DD}$, $\X^{DA}$ will have three summands. 

\begin{definition}\label{def:XDA}
The $DA$ bimodule ${^{\B(2,0)}}(\X^{DA})_{\B(2,0)}$ has basis elements 
\[
\{S^-_t, S^-_b, S^+_b, S^+_t\},
\]
all of which have left idempotent $\Ib_{\varnothing}$ and right idempotent $\Ib_{\varnothing}$. The one nonzero term of $\delta^1_1$ is
\[
\delta^1(S^+_b) = \Ib_{\varnothing} \otimes S^-_b.
\]
There are no nonzero terms of $\delta^1_i$ for $i > 1$.

The $DA$ bimodule ${^{\B(2,1)}}(\X^{DA})_{\B(2,1)}$ has basis elements 
\begin{align*}
&\{ {_{A}}(S^-_t)^{A}, {_{C}}(S^-_t)^{C}, {_{B}}(W_t)^{A}, {_{A}}(S^-_b)^{A}, {_{C}}(S^-_b)^{C}, {_{B}}(E_b)^{C}, \\
&{_{B}}(W_b)^{A},  {_{A}}(S^+_b)^{A}, {_{C}}(S^+_b)^{C}, {_{B}}(E_t)^{C}, {_{A}}(S^+_t)^{A}, {_{C}}(S^+_t)^{C} \},
\end{align*}
where the left idempotent is indicated as a subscript to the left and the right idempotent is indicated as a superscript to the right. 

Label these basis elements, in the given order, as $(1)$ through $(12)$. The $DA$ operation $\delta^1_1$ is defined by:
\begin{itemize}
\item $\delta^1_1((1)) = 0$
\item $\delta^1_1((2)) = 0$
\item $\delta^1_1((3)) = L_1 \otimes (1)$
\item $\delta^1_1((4)) = 0$
\item $\delta^1_1((5)) = U_2 \otimes (2)$
\item $\delta^1_1((6)) = R_2 \otimes (2)$
\item $\delta^1_1((7)) = U_2 \otimes (3) + L_1 \otimes (4)$
\item $\delta^1_1((8)) = \Ib_{A} \otimes (4)$
\item $\delta^1_1((9)) = \Ib_{C} \otimes (5) + L_2 \otimes (6)$
\item $\delta^1_1((10)) = U_1 \otimes (6)$
\item $\delta^1_1((11)) = R_1 \otimes (7) + U_1 \otimes (8)$
\item $\delta^1_1((12)) = L_2 \otimes (10)$.
\end{itemize}
The $DA$ operation $\delta^1_2$ is only $\delta^1_2(x,\Ib') = \Ib \otimes x$ where $\Ib,\Ib'$ are the left and right idempotents of each basis element $x$. The operation $\delta^1_3$ has the following nonzero terms:
\begin{itemize}
\item $\delta^1_3((1)\otimes R_1 \otimes R_2) = R_1 \otimes (6)$
\item $\delta^1_3((2)\otimes L_2 \otimes L_1) = L_2 \otimes (3)$
\item $\delta^1_3((3)\otimes R_1 \otimes R_2) = \Ib_{B} \otimes (10)$
\item $\delta^1_3((5)\otimes L_2 \otimes L_1) = L_2 \otimes (7)$
\item $\delta^1_3((6)\otimes L_2 \otimes L_1) = \Ib_{B} \otimes (7) + L_1 \otimes (8)$
\item $\delta^1_3((7)\otimes R_1 \otimes R_2) = R_2 \otimes (12)$
\item $\delta^1_3((10)\otimes L_2 \otimes L_1) = L_1 \otimes (11)$.
\end{itemize}
The operation $\delta^1_4$ has the following nonzero terms:
\begin{itemize}
\item $\delta^1_4((1) \otimes R_1 \otimes U_2 \otimes L_1) = R_1 \otimes (7) + U_1 \otimes (8)$
\item $\delta^1_4((2) \otimes L_2 \otimes U_1 \otimes R_2) = L_2 \otimes (10)$
\item $\delta^1_4((3) \otimes R_1 \otimes U_2 \otimes L_1) = L_1 \otimes (11)$
\item $\delta^1_4((5) \otimes L_2 \otimes U_1 \otimes R_2) = U_2 \otimes (12)$
\item $\delta^1_4((6) \otimes L_2 \otimes U_1 \otimes R_2) = R_2 \otimes (12)$.
\end{itemize}

The $DA$ bimodule ${^{\B(2,2)}}(\X^{DA})_{\B(2,2)}$ has basis elements 
\begin{align*}
&\{ {_{AC}}(S^-_t)^{AC}, {_{BC}}(W_t)^{AC}, {_{AC}}(S^-_b)^{AC}, {_{AB}}(E_b)^{AC}, \\
&{_{BC}}(W_b)^{AC},  {_{AC}}(S^+_b)^{AC}, {_{AB}}(E_t)^{AC}, {_{AC}}(S^+_t)^{AC} \}.
\end{align*}
Label these basis elements, in the given order, as $(1)$ through $(8)$. The $DA$ operation $\delta^1_1$ is defined by:
\begin{itemize}
\item $\delta^1_1((1)) = 0$
\item $\delta^1_1((2)) = L_1 \otimes (1)$
\item $\delta^1_1((3)) = U_2 \otimes (1)$
\item $\delta^1_1((4)) = R_2 \otimes (1)$
\item $\delta^1_1((5)) = U_2 \otimes (2) + L_1 \otimes (3)$
\item $\delta^1_1((6)) = \Ib_{AC} \otimes (3) + L_2 \otimes (4)$
\item $\delta^1_1((7)) = R_2 R_1 \otimes (2) + U_1 \otimes (4)$
\item $\delta^1_1((8)) = R_1 \otimes (5) + U_1 \otimes (6) + L_2 \otimes (7)$.
\end{itemize}
The $DA$ operation $\delta^1_2$ is only $\delta^1_2(x,\Ib') = \Ib \otimes x$ where $\Ib,\Ib'$ are the left and right idempotents of each basis element $x$. The operation $\delta^1_3$ has several nonzero terms; we group them according to their starting vertex and their labels in Figure~\ref{fig:UnsimplifiedDABimod3} below. 

The terms corresponding to the label $(T2)$ or $(T5)$, and starting at basis element $(1)$, are
\begin{itemize}
\item $\delta^1_3((1)\otimes U_1 \otimes U_2) = R_1 U_2 \otimes (2) + L_2 U_1 \otimes (4)$
\item $\delta^1_3((1)\otimes L_2 U_1 \otimes R_2) = R_1 U_2 \otimes (2) + L_2 U_1 \otimes (4)$
\item $\delta^1_3((1)\otimes R_1 \otimes L_1 U_2) = R_1 U_2 \otimes (2) + L_2 U_1 \otimes (4)$
\end{itemize}
(those whose output vertex is $(2)$ have label $(T2)$, and those whose output vertex is $(4)$ have label $(T5)$). 

The terms corresponding to the label $(T1)$ (starting at basis element $(2)$) are
\begin{itemize}
\item $\delta^1_3((2)\otimes U_1 \otimes U_2) = L_1 L_2 \otimes (7)$
\item $\delta^1_3((2)\otimes L_2 U_1 \otimes R_2) = L_1 L_2 \otimes (7)$
\item $\delta^1_3((2)\otimes R_1 \otimes L_1 U_2) = L_1 L_2 \otimes (7)$.
\end{itemize}

The terms corresponding to the labels $(T2)$ or $(T0)$, and starting at basis element $(3)$, are
\begin{itemize}
\item $\delta^1_3((3)\otimes U_1 \otimes U_2) = R_1 U_2 \otimes (5) + U_1 U_2 \otimes (6)$
\item $\delta^1_3((3)\otimes L_2 U_1 \otimes R_2) = R_1 U_2 \otimes (5) + U_1 U_2 \otimes (6)$
\item $\delta^1_3((3)\otimes R_1 \otimes L_1 U_2) = R_1 U_2 \otimes (5) + U_1 U_2 \otimes (6)$
\end{itemize}
(those whose output vertex is $(5)$ have label $(T2)$, and those whose output vertex is $(6)$ have label $(T0)$). 

The terms corresponding to the labels $(T3)$ and $(T6)$, and starting at basis element $(4)$, are 
\begin{itemize}
\item $\delta^1_3((4)\otimes U_1 \otimes U_2) = R_2 R_1 \otimes (5) + R_2 U_1 \otimes (6)$
\item $\delta^1_3((4)\otimes L_2 U_1 \otimes R_2) = R_2 R_1 \otimes (5) + R_2 U_1 \otimes (6)$
\item $\delta^1_3((4)\otimes R_1 \otimes L_1 U_2) = R_2 R_1 \otimes (5) + R_2 U_1 \otimes (6)$
\end{itemize}
(those whose output vertex is $(5)$ have label $(T6)$, and those whose output vertex is $(6)$ have label $(T3)$). 

The terms corresponding to the label $(T4)$ (starting at basis element $(5)$) are
\begin{itemize}
\item $\delta^1_3((5)\otimes U_1 \otimes U_2) = L_1 U_2 \otimes (8)$
\item $\delta^1_3((5)\otimes L_2 U_1 \otimes R_2) = L_1 U_2 \otimes (8)$
\item $\delta^1_3((5)\otimes R_1 \otimes L_1 U_2) = L_1 U_2 \otimes (8)$.
\end{itemize}

The terms corresponding to the label $(T3)$ and starting at basis element $(7)$ are
\begin{itemize}
\item $\delta^1_3((7)\otimes U_1 \otimes U_2) = R_2 U_1 \otimes (8)$
\item $\delta^1_3((7)\otimes L_2 U_1 \otimes R_2) = R_2 U_1 \otimes (8)$
\item $\delta^1_3((7)\otimes R_1 \otimes L_1 U_2) = R_2 U_1 \otimes (8)$.
\end{itemize}

The operation $\delta^1_4$ has the following nonzero terms:
\begin{itemize}
\item $\delta^1_4((1) \otimes R_1 \otimes U_2 \otimes L_1) = R_1 \otimes (5) + U_1 \otimes 6$
\item $\delta^1_4((1) \otimes L_2 \otimes U_1 \otimes R_2) = L_2 \otimes (7)$
\item $\delta^1_4((2) \otimes R_1 \otimes U_2 \otimes L_1) = L_1 \otimes (8)$
\item $\delta^1_4((3) \otimes L_2 \otimes U_1 \otimes R_2) = U_2 \otimes (8)$
\item $\delta^1_4((4) \otimes L_2 \otimes U_1 \otimes R_2) = R_2 \otimes (8)$.
\end{itemize}

Finally, the operation $\delta^1_5$ has the following nonzero terms, corresponding to the label $(T7)$ in Figure~\ref{fig:UnsimplifiedDABimod3}.
\begin{itemize}
\item $\delta^1_5((1) \otimes U_1 \otimes U_2 \otimes U_1 \otimes U_2) = U_1 U_2 \otimes (8)$
\item $\delta^1_5((1) \otimes L_2 U_1 \otimes R_2 \otimes U_1 \otimes U_2) = U_1 U_2 \otimes (8)$
\item $\delta^1_5((1) \otimes R_1 \otimes L_1 U_2 \otimes U_1 \otimes U_2) = U_1 U_2 \otimes (8)$
\item $\delta^1_5((1) \otimes U_1 \otimes U_2 \otimes L_2 U_1 \otimes R_2) = U_1 U_2 \otimes (8)$
\item $\delta^1_5((1) \otimes L_2 U_1 \otimes R_2 \otimes L_2 U_1 \otimes R_2) = U_1 U_2 \otimes (8)$
\item $\delta^1_5((1) \otimes R_1 \otimes L_1 U_2 \otimes L_2 U_1 \otimes R_2) = U_1 U_2 \otimes (8)$
\item $\delta^1_5((1) \otimes U_1 \otimes U_2 \otimes R_1 \otimes L_1 U_2) = U_1 U_2 \otimes (8)$
\item $\delta^1_5((1) \otimes L_2 U_1 \otimes R_2 \otimes R_1 \otimes L_1 U_2) = U_1 U_2 \otimes (8)$
\item $\delta^1_5((1) \otimes R_1 \otimes L_1 U_2 \otimes R_1 \otimes L_1 U_2) = U_1 U_2 \otimes (8)$.
\end{itemize}

The $DA$ bimodule $\X^{DA}$ is defined as
\[
\X^{DA} := ({^{\B(2,0)}}(\X^{DA})_{\B(2,0)}) \oplus ({^{\B(2,1)}}(\X^{DA})_{\B(2,1)}) \oplus ({^{\B(2,2)}}(\X^{DA})_{\B(2,2)}).
\]
\end{definition}

Figure~\ref{fig:UnsimplifiedDABimod12} shows the summands ${^{\B(2,0)}}(\X^{DA})_{\B(2,0)}$ and ${^{\B(2,1)}}(\X^{DA})_{\B(2,1)}$ of $\X^{DA}$ in the graphical notation of Section~\ref{sec:Graphical}. Figure~\ref{fig:UnsimplifiedDABimod3} shows ${^{\B(2,2)}}(\X^{DA})_{\B(2,2)}$. For convenience, $\delta^1_1$ edges are blue, $\delta^1_3$ edges are green, $\delta^1_4$ edges are red, and $\delta^1_5$ edges are teal. Edges labeled by sums do not have their labels written out fully; the labels are listed above in Definition~\ref{def:XDA}.

\begin{figure}
	\includegraphics[scale=0.7]{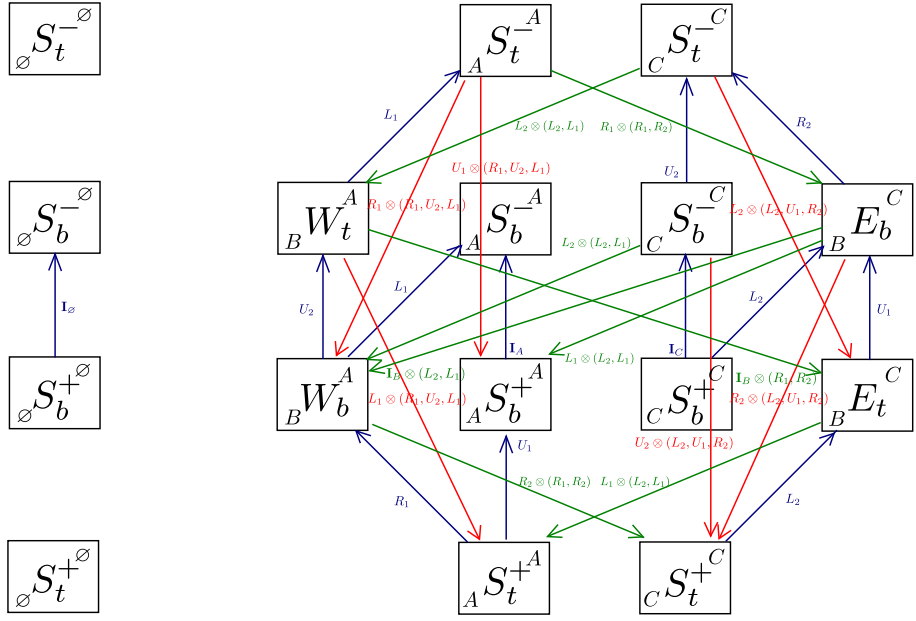}
	\caption{First two summands of the unsimplified local DA bimodule for a singular crossing.}
	\label{fig:UnsimplifiedDABimod12}
\end{figure}

\begin{figure}
	\includegraphics[scale=0.7]{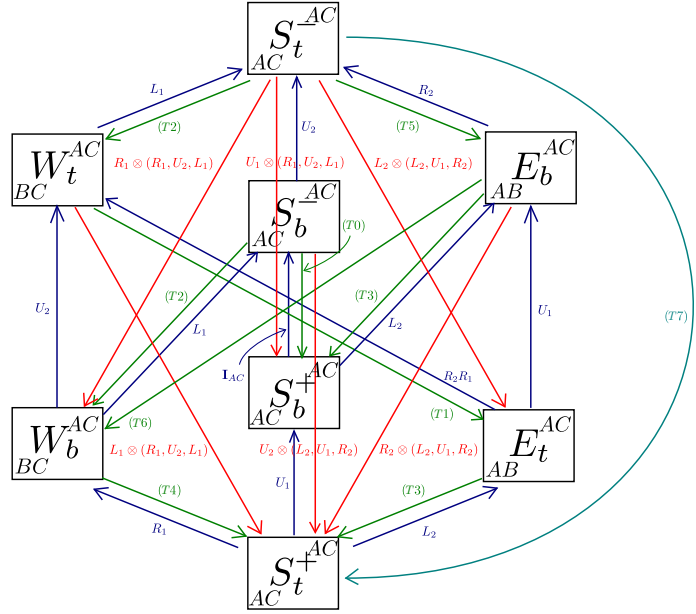}
	\caption{Third summand of the unsimplified local DA bimodule for a singular crossing.}
	\label{fig:UnsimplifiedDABimod3}
\end{figure}

Note that there is a natural one-to-one correspondence between the basis elements of $\X^{DA}$ and of $\X^{DD}$.
\begin{definition}
The homological degree, both refined degrees, and the multiple and single Alexander degrees of a basis element of $\X^{DA}$ are defined to be the degrees of the corresponding basis element of $\X^{DD}$.
\end{definition}

\subsection{Checking the \texorpdfstring{$DA$}{DA} bimodule relations}

\begin{theorem}
$\X^{DA}$ is a valid $DA$ bimodule over $(\B(2),\B(2))$.
\end{theorem}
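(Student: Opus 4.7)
The plan is a direct, term-by-term verification of the $DA$ structure relation via the graphical criterion of Section~\ref{sec:Graphical}. Since $\partial = 0$ on $\B(2)$, both the $(\partial \otimes \id) \circ \delta^1_i$ and $\delta^1_i \circ (\id \otimes \partial'_j)$ terms in the $DA$ structure equation vanish automatically. So for each pair of basis elements $x, y$ of $\X^{DA}$ and each sequence $(a'_1, \ldots, a'_{i-1})$ of basis elements of $\B(2)$ (as listed for $\B(2,k)$ in the previous section), it suffices to check that the sum of products over composable zig-zag chains from $x$ to $y$ in $\Gamma$, plus the sum of single-edge contributions obtained by merging adjacent inputs via $\mu'_{j,j+1}(a'_j \otimes a'_{j+1})$, vanishes in $\B(2)$. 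By Remark~\ref{rem:IgnoreIdentityEdges}, the trivial $\delta^1_2$ action contributes no new obligations.

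The three summands of $\X^{DA}$ decouple. For ${^{\B(2,0)}}(\X^{DA})_{\B(2,0)}$, the only nonzero operation is $\delta^1_1(S^+_b) = \Ib_{\varnothing} \otimes S^-_b$, so $(\delta^1_1)^2 = 0$ is immediate. For the middle and third summands, I would enumerate by input length $i$, which ranges up to $i = 9$ since $\delta^1_j = 0$ for $j > 5$. For each $i$ the admissible sequences $(a'_1, \ldots, a'_{i-1})$ are sharply restricted: idempotent compatibility forces the left idempotent of $a'_1$ to match the right idempotent of $x$, adjacent idempotents to agree, and the final one to match the right idempotent of $y$; preservation of the homological and refined Alexander multi-gradings cuts the list further. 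The $\Rc o$ symmetry of $\X^{DA}$ mentioned in the introduction halves the remaining cases. Each surviving identity then reduces to a finite calculation in $\B(2)$ using only $L_i R_i = R_i L_i = U_i$, $R_i R_{i+1} = L_i L_{i-1} = 0$, centrality of the $U_i$, and the vanishing $U_i \Ib_{\x} = 0$ at I-states with $\x \cap \{i-1, i\} = \varnothing$.

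The main obstacle is the third summand ${^{\B(2,2)}}(\X^{DA})_{\B(2,2)}$, where the $\delta^1_3$ edges labeled $(T0)$--$(T6)$, the $\delta^1_4$ edges, and the nine $\delta^1_5$ edges $(T7)$ must all fit together coherently. The most delicate checks occur at input lengths $i = 4$ and $i = 5$ starting from ${_{AC}}(S^-_t)^{AC}$ and ending at ${_{AC}}(S^+_t)^{AC}$: each of the nine $(T7)$ input sequences must precisely cancel the combined contributions of composable pairs $\delta^1_j \circ \delta^1_{i-j+1}$ and the merge terms $\delta^1_{i-1} \circ (\id \otimes \mu'_{j,j+1})$, with outputs of the form $U_1 U_2 \otimes (8)$ or $U_1 \otimes (8)$ or $U_2 \otimes (8)$. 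That the coherence is essentially forced by the lower-order data, as foreshadowed in the remark just after Theorem~\ref{thm:IntroDAWellDefined}, is what makes the verification both nontrivial and tractable.
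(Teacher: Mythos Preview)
Your plan matches the paper's approach exactly: a direct, summand-by-summand, case-by-case verification using the graphical criterion, exploiting $\partial=0$ on $\B(2)$ so that only the $S_1$ (composable pairs) and $S_3$ (merge) contributions need to be checked, and running $i$ from $1$ up to $9$.

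Two small corrections to your sketch. First, the $(T7)$ edges are $\delta^1_5$ operations, so they enter the structure relation at $i=5$ (via composable pairs with $\delta^1_1$, cancelling the green--green $\delta^1_3\circ\delta^1_3$ compositions) and at $i=6$ (via factoring an input of $\delta^1_5$, cancelling the green--red $\delta^1_3$/$\delta^1_4$ compositions), not at $i=4,5$; the paper's most intricate cancellation is in fact the $i=6$ case, where the $12$ surviving $S_1$ terms from $(1)\to(8)$ must match the $12$ surviving $S_3$ factorizations among the nine $(T7)$ input sequences. Second, be careful invoking the $\Rc o$ symmetry to halve cases on the third summand: as noted later in the paper, $\Rc o$ only relates ${^{\B(2,2)}}(\X^{DA})_{\B(2,2)}$ to the \emph{modified} bimodule with the alternate $\delta^1_3$ and $\delta^1_5$ input patterns, not to itself, so it does not directly cut the casework there (the paper does not use this shortcut and simply checks everything).
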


\begin{proof}
One can check that the idempotent and grading data of the graphs in Figure~\ref{fig:UnsimplifiedDABimod12} and Figure~\ref{fig:UnsimplifiedDABimod3} are compatible. We need to verify the remaining properties discussed in Section~\ref{sec:Graphical}. We use the terminology of that section: we have sets of edges (or composable pairs) $S_1$, $S_2$, $S_3$, and $S_4$, and we need to check that certain sums are zero.

In fact, since $\B(2)$ has no differential, the set $S_2$ is empty, and $S_4$ does not contribute to the sum. For each choice of an input basis element $x$ and output basis element $y$ of $\X^{DA}$, and sequence of algebra inputs $(a'_1, \ldots, a'_{i-1})$, we want to check that the sum over $S_1$ of the product of the edge labels, plus the sum over $S_3$ of the edge labels, is zero. We may verify this condition for each summand of $\X^{DA}$ individually. 

For the summand of $\X^{DA}$ over $(\B(2,0),\B(2,0))$, the $i = 1$ sums are zero because there are no composable pairs of $\delta^1_1$ arrows. The $i=2$ sums are zero because the only $\delta^1_2$ arrows are identity arrows (labeled $\Ib \otimes \Ib'$ and omitted from the diagram as in Warning~\ref{warn:IdentityEdges}). The $i=3$ sum also zero, and only identity arrows are involved. In general, we can ignore the identity arrows below by Remark~\ref{rem:IgnoreIdentityEdges}.

For the summand of $\X^{DA}$ over $(\B(2,1),\B(2,1))$, label the basis elements as $(1)$--$(12)$ as above. The $i=1$ sums come only from $S_1$; each term is a product of labels on a composable pair of $\delta^1_1$ arrows (blue arrows on the right diagram in Figure~\ref{fig:UnsimplifiedDABimod12}). We have the following nonzero terms, written as labeled arrows and ordered by the middle vertex of the composable pair:
\begin{alignat*}{2}
&(9) \xrightarrow{U_2} (2) \quad && (9) \xrightarrow{U_2} (2) \\
&(11) \xrightarrow{U_1} (4) \quad && (11) \xrightarrow{U_1} (4). \\
\end{alignat*}
Terms which are zero due to relations in the algebra are omitted. The above terms sum to zero (recall that we are working over $\F_2$). The $i=2$ sums also come only from $S_1$, and they all involve identity arrows.

The $i=3$ sums come only from $S_1$, since there are no non-identity $\delta^1_2$ arrows. Each term is a product of output labels on a composable pair of $\delta^1_1$ (blue) arrows and $\delta^1_3$ (green) arrows (in either order) in Figure~\ref{fig:UnsimplifiedDABimod12}. The nonzero terms coming from a green arrow followed by a blue arrow are:
\begin{alignat*}{2}
&(3) \xrightarrow{U_1 \otimes (R_1, R_2)} (6) \quad && (5) \xrightarrow{L_2 U_2 \otimes (L_2, L_1)} (3) \\
&(6) \xrightarrow{U_2 \otimes (L_2, L_1)} (3) \quad && (6) \xrightarrow{L_1 \otimes (L_2, L_1)} (4) \\
& (6) \xrightarrow{L_1 \otimes (L_2, L_1)} (4) \quad && (7) \xrightarrow{U_2 \otimes (R_1, R_2)} (10) \\ 
& (10) \xrightarrow{U_1 \otimes (L_2, L_1)} (7) \quad && (10) \xrightarrow{L_1 U_1 \otimes (L_2, L_1)} (8).
\end{alignat*}
The nonzero terms coming from a blue arrow followed by a green arrow are:
\begin{alignat*}{2}
&(3) \xrightarrow{U_1 \otimes (R_1, R_2)} (6) \quad && (5) \xrightarrow{L_2 U_2 \otimes (L_2, L_1)} (3) \\
&(6) \xrightarrow{U_2 \otimes (L_2, L_1)} (3) \quad && (7) \xrightarrow{U_2 \otimes (R_1, R_2)} (10) \\
& (9) \xrightarrow{L_2 \otimes (L_2, L_1)} (7) \quad && (9) \xrightarrow{L_2 \otimes (L_2, L_1)} (7) \\
& (10) \xrightarrow{U_1 \otimes (L_2, L_1)} (7) \quad && (10) \xrightarrow{L_1 U_1 \otimes (L_2, L_1)} (8). \\
\end{alignat*}
These terms sum to zero, so the $i=3$ sums are zero.

The $i=4$ sums (ignoring identity arrows) still come only from $S_1$; there are no contributions from $S_3$ because no $\delta^1_3$ (green) arrow has an algebra input that can be factored nontrivially. Each term is a product of output labels on a composable pair of $\delta^1_1$ (blue) arrows and $\delta^1_4$ (red) arrows (in either order) in Figure~\ref{fig:UnsimplifiedDABimod12}. The nonzero terms coming from a blue arrow followed by a red arrow are:
\begin{alignat*}{2}
&(3) \xrightarrow{U_1 \otimes (R_1, U_2, L_1)} (7) \quad && (3) \xrightarrow{L_1 U_1 \otimes (R_1, U_2, L_1)} (8) \\
&(5) \xrightarrow{L_2 U_2 \otimes (L_2, U_1, R_2)} (10) \quad && (6) \xrightarrow{U_2 \otimes (L_2, U_1, R_2)} (10) \\
&(9) \xrightarrow{U_2 \otimes (L_2, U_1, R_2)} (12) \quad && (9) \xrightarrow{U_2 \otimes (L_2, U_1, R_2)} (12) \\
\end{alignat*}
The nonzero terms coming from a red arrow followed by a blue arrow are:
\begin{alignat*}{2}
&(1) \xrightarrow{U_1 \otimes (R_1, U_2, L_1)} (4) \quad && (1) \xrightarrow{U_1 \otimes (R_1, U_2, L_1)} (4) \\
&(3) \xrightarrow{U_1 \otimes (R_1, U_2, L_1)} (7) \quad && (3) \xrightarrow{L_1 U_1 \otimes (R_1, U_2, L_1)} (8) \\
&(5) \xrightarrow{L_2 U_2 \otimes (L_2, U_1, R_2)} (10) \quad && (6) \xrightarrow{U_2 \otimes (L_2, U_1, R_2)} (10). \\
\end{alignat*}
These terms sum to zero, so the $i=4$ sums are zero.

The $i = 5$ sums (ignoring identity arrows) come from both $S_1$ and $S_3$. The $S_1$ terms are products of output labels on composable pairs of two $\delta^1_3$ (green) arrows in Figure~\ref{fig:UnsimplifiedDABimod12}. The $S_3$ terms are output labels on $\delta^1_4$ (red) arrows, one of whose input algebra elements has been factored nontrivially. We have the following nonzero terms from $S_1$:
\begin{alignat*}{2}
&(1) \xrightarrow{R_1 \otimes (R_1, R_2, L_2, L_1)} (7) \quad && (1) \xrightarrow{U_1 \otimes (R_1, R_2, L_2, L_1)} (8) \\
&(2) \xrightarrow{L_2 \otimes (L_2, L_1, R_1, R_2)} (10) \quad && (3) \xrightarrow{L_1 \otimes (R_1, R_2, L_2, L_1)} (11)  \\
&(5) \xrightarrow{U_2 \otimes (L_2, L_1, R_1, R_2)} (12) \quad && (6) \xrightarrow{R_2 \otimes (L_2, L_1, R_1, R_2)} (12). \\
\end{alignat*}
The terms from $S_3$ are the same, so the $i=5$ sums are zero.

The $i = 6$ sums (ignoring identity arrows) can come only from $S_1$, since there are no $\delta^1_5$ arrows. Each term would be a product of output labels on a composable pair of $\delta^1_3$ (green) arrows and $\delta^1_4$ (red) arrows (in either order). However, all such products are zero by relations in the algebra, so the $i = 6$ sums are zero. The $i = 7$ sums are zero because there are no composable pairs of $\delta^1_4$ (red) arrows in Figure~\ref{fig:UnsimplifiedDABimod12}. The sums for $i \geq 8$ are also zero. Thus, the summand of $\X^{DA}$ over $(\B(2,1),\B(2,1))$ is a valid $DA$ bimodule.

For the summand of $\X^{DA}$ over $(\B(2,2),\B(2,2))$, label the basis elements as $(1)$--$(8)$. The $i=1$ sums come from composable pairs of $\delta^1_1$ (blue) arrows in Figure~\ref{fig:UnsimplifiedDABimod3}. The nonzero terms, each appearing twice, are as follows:
\begin{alignat*}{2}
&(5) \xrightarrow{L_1 U_2} (1) \quad && (6) \xrightarrow{U_2} (1)  \\
& (7) \xrightarrow{R_2 U_1} (1) \quad && (8) \xrightarrow{R_1 U_2} (2) \\
& (8) \xrightarrow{U_1} (3) \quad &&  (8) \xrightarrow{L_2 U_1} (4). \\
\end{alignat*}
Since each term appears twice, the $i=1$ sums are zero. The $i = 2$ sums all involve identity arrows.

The $i=3$ sums come from composable pairs of $\delta^1_1$ (blue) arrows and $\delta^1_3$ (green) arrows (in either order) in Figure~\ref{fig:UnsimplifiedDABimod3}. The sums taken together have the following nonzero terms, organized by the vertex appearing in the middle of the composition.

\begin{convention}\label{conv:LabelingWithSets}
A set of algebra elements in a term below should be expanded out into multiple terms; we use set notation to save space.
\end{convention}

\begin{itemize}

\item With middle vertex $(1)$:
\begin{alignat*}{1}
&(2) \xrightarrow{U_1 U_2 \otimes \{(U_1, U_2), (L_2 U_1, R_2), (R_1, L_1 U_2)\}} (2) \\ 
&(2) \xrightarrow{L_1 L_2 U_1 \otimes \{(U_1, U_2), (L_2 U_1, R_2), (R_1, L_1 U_2)\}} (4) \\ 
&(3) \xrightarrow{R_1 U_2^2 \otimes \{(U_1, U_2), (L_2 U_1, R_2), (R_1, L_1 U_2)\}} (2) \\ 
&(3) \xrightarrow{L_2 U_1 U_2 \otimes \{(U_1, U_2), (L_2 U_1, R_2), (R_1, L_1 U_2)\}} (4) \\ 
&(4) \xrightarrow{R_2 R_1 U_2 \otimes \{(U_1, U_2), (L_2 U_1, R_2), (R_1, L_1 U_2)\}} (2) \\ 
&(4) \xrightarrow{U_1 U_2 \otimes \{(U_1, U_2), (L_2 U_1, R_2), (R_1, L_1 U_2)\}} (4). \\ 
\end{alignat*}

\item With middle vertex $(2)$:
\begin{alignat*}{1}
&(1) \xrightarrow{U_1 U_2 \otimes \{(U_1, U_2), (L_2 U_1, R_2), (R_1, L_1 U_2)\}} (1) \\ 
&(5) \xrightarrow{L_1 L_2 U_2 \otimes \{(U_1, U_2), (L_2 U_1, R_2), (R_1, L_1 U_2)\}} (7) \\ 
&(7) \xrightarrow{U_1 U_2 \otimes \{(U_1, U_2), (L_2 U_1, R_2), (R_1, L_1 U_2)\}} (7). \\ 
\end{alignat*}

\item With middle vertex $(3)$: 
\begin{alignat*}{1}
&(5) \xrightarrow{U_1 U_2 \otimes \{(U_1, U_2), (L_2 U_1, R_2), (R_1, L_1 U_2)\}} (5) \\ 
&(5) \xrightarrow{L_1 U_1 U_2 \otimes \{(U_1, U_2), (L_2 U_1, R_2), (R_1, L_1 U_2)\}} (6) \\ 
&(6) \xrightarrow{R_1 U_2 \otimes \{(U_1, U_2), (L_2 U_1, R_2), (R_1, L_1 U_2)\}} (5) \\ 
&(6) \xrightarrow{U_1 U_2 \otimes \{(U_1, U_2), (L_2 U_1, R_2), (R_1, L_1 U_2)\}} (6). \\ 
\end{alignat*}

\item With middle vertex $(4)$:
\begin{alignat*}{1}
&(1) \xrightarrow{U_1 U_2 \otimes \{(U_1, U_2), (L_2 U_1, R_2), (R_1, L_1 U_2)\}} (1). \\ 
&(6) \xrightarrow{R_1 U_2 \otimes \{(U_1, U_2), (L_2 U_1, R_2), (R_1, L_1 U_2)\}} (5) \\ 
&(6) \xrightarrow{U_1 U_2 \otimes \{(U_1, U_2), (L_2 U_1, R_2), (R_1, L_1 U_2)\}} (6) \\ 
&(7) \xrightarrow{R_2 R_1 U_1 \otimes \{(U_1, U_2), (L_2 U_1, R_2), (R_1, L_1 U_2)\}} (5) \\ 
&(7) \xrightarrow{R_2 U_1^2 \otimes \{(U_1, U_2), (L_2 U_1, R_2), (R_1, L_1 U_2)\}} (6) \\ 
\end{alignat*}

\item With middle vertex $(5)$:
\begin{alignat*}{1}
&(3) \xrightarrow{R_1 U_2^2 \otimes \{(U_1, U_2), (L_2 U_1, R_2), (R_1, L_1 U_2)\}} (2) \\ 
&(3) \xrightarrow{U_1 U_2 \otimes \{(U_1, U_2), (L_2 U_1, R_2), (R_1, L_1 U_2)\}} (3) \\ 
&(4) \xrightarrow{R_2 R_1 U_2 \otimes \{(U_1, U_2), (L_2 U_1, R_2), (R_1, L_1 U_2)\}} (2) \\ 
&(4) \xrightarrow{R_2 U_1 \otimes \{(U_1, U_2), (L_2 U_1, R_2), (R_1, L_1 U_2)\}} (3) \\ 
&(8) \xrightarrow{U_1 U_2 \otimes \{(U_1, U_2), (L_2 U_1, R_2), (R_1, L_1 U_2)\}} (8). \\ 
\end{alignat*}

\item With middle vertex $(6)$:
\begin{alignat*}{1}
&(3) \xrightarrow{U_1 U_2 \otimes \{(U_1, U_2), (L_2 U_1, R_2), (R_1, L_1 U_2)\}} (3) \\ 
&(3) \xrightarrow{L_2 U_1 U_2 \otimes \{(U_1, U_2), (L_2 U_1, R_2), (R_1, L_1 U_2)\}} (4) \\ 
&(4) \xrightarrow{R_2 U_1 \otimes \{(U_1, U_2), (L_2 U_1, R_2), (R_1, L_1 U_2)\}} (3) \\ 
&(4) \xrightarrow{U_1 U_2 \otimes \{(U_1, U_2), (L_2 U_1, R_2), (R_1, L_1 U_2)\}} (4) \\ 
\end{alignat*}

\item With middle vertex $(7)$:
\begin{alignat*}{1}
&(2) \xrightarrow{U_1 U_2 \otimes \{(U_1, U_2), (L_2 U_1, R_2), (R_1, L_1 U_2)\}} (2) \\ 
&(2) \xrightarrow{L_1 L_2 U_1 \otimes \{(U_1, U_2), (L_2 U_1, R_2), (R_1, L_1 U_2)\}} (4) \\ 
&(8) \xrightarrow{U_1 U_2 \otimes \{(U_1, U_2), (L_2 U_1, R_2), (R_1, L_1 U_2)\}} (8). \\ 
\end{alignat*}

\item With middle vertex $(8)$:
\begin{alignat*}{1}
&(5) \xrightarrow{U_1 U_2 \otimes \{(U_1, U_2), (L_2 U_1, R_2), (R_1, L_1 U_2)\}} (5) \\ 
&(5) \xrightarrow{L_1 U_1 U_2 \otimes \{(U_1, U_2), (L_2 U_1, R_2), (R_1, L_1 U_2)\}} (6) \\ 
&(5) \xrightarrow{L_1 L_2 U_2 \otimes \{(U_1, U_2), (L_2 U_1, R_2), (R_1, L_1 U_2)\}} (7) \\ 
&(7) \xrightarrow{R_2 R_1 U_1 \otimes \{(U_1, U_2), (L_2 U_1, R_2), (R_1, L_1 U_2)\}} (5) \\ 
&(7) \xrightarrow{R_2 U_1^2 \otimes \{(U_1, U_2), (L_2 U_1, R_2), (R_1, L_1 U_2)\}} (6) \\ 
&(7) \xrightarrow{U_1 U_2 \otimes \{(U_1, U_2), (L_2 U_1, R_2), (R_1, L_1 U_2)\}} (7). \\ 
\end{alignat*}
\end{itemize}
These terms cancel in pairs, so the $i=3$ sums are zero.

The $i=4$ sums have $S_1$ terms which come from composable pairs of $\delta^1_1$ (blue) arrows and $\delta^1_4$ (red) arrows (in either order) in Figure~\ref{fig:UnsimplifiedDABimod3}. They also have $S_3$ terms which come from factorizing an algebra input of a $\delta^1_3$ (green) arrow. The $S_1$ terms are as follows.

\begin{itemize}
\item With middle vertex $(1)$:
\begin{alignat*}{3}
&(2) \xrightarrow{U_1 \otimes (R_1, U_2, L_1)} (5) \quad
&&(2) \xrightarrow{L_1 U_1 \otimes (R_1, U_2, L_1)} (6) \quad
&&(2) \xrightarrow{L_1 L_2 \otimes (L_2, U_1, R_2)} (7) \\
&(3) \xrightarrow{R_1 U_2 \otimes (R_1, U_2, L_1)} (5) \quad
&&(3) \xrightarrow{U_1 U_2 \otimes (R_1, U_2, L_1)} (6) \quad
&&(3) \xrightarrow{L_2 U_2 \otimes (L_2, U_1, R_2)} (7) \\
&(4) \xrightarrow{R_2 R_1 \otimes (R_1, U_2, L_1)} (5) \quad
&&(4) \xrightarrow{R_2 U_1 \otimes (R_1, U_2, L_1)} (6) \quad
&&(4) \xrightarrow{U_2 \otimes (L_2, U_1, R_2)} (7). \\
\end{alignat*}

\item With middle vertex $(2)$: 
\begin{alignat*}{2}
&(5) \xrightarrow{L_1 U_2 \otimes (R_1, U_2, L_1)} (8) \quad && (7) \xrightarrow{R_2 U_1 \otimes (R_1, U_2, L_1)} (8). \\
\end{alignat*}

\item With middle vertex $(3)$: 
\begin{alignat*}{2}
&(5) \xrightarrow{L_1 U_2 \otimes (L_2, U_1, R_2)} (8) \quad && (6) \xrightarrow{U_2 \otimes (L_2, U_1, R_2)} (8). \\
\end{alignat*}

\item With middle vertex $(4)$:
\begin{alignat*}{2}
&(6) \xrightarrow{U_2 \otimes (L_2, U_1, R_2)} (8) \quad && (7) \xrightarrow{R_2 U_1 \otimes (L_2, U_1, R_2)} (8). \\
\end{alignat*}

\item With middle vertex $(5)$:
\begin{alignat*}{2}
&(1) \xrightarrow{R_1 U_2 \otimes (R_1, U_2, L_1)} (2) \quad && (1) \xrightarrow{U_1 \otimes (R_1, U_2, L_1)} (3). \\
\end{alignat*}

\item With middle vertex $(6)$:
\begin{alignat*}{2}
&(1) \xrightarrow{U_1 \otimes (R_1, U_2, L_1)} (3) \quad && (1) \xrightarrow{L_2 U_1 \otimes (R_1, U_2, L_1)} (4). \\
\end{alignat*}

\item With middle vertex $(7)$:
\begin{alignat*}{2}
&(1) \xrightarrow{R_1 U_2 \otimes (L_2, U_1, R_2)} (2) \quad && (1) \xrightarrow{L_2 U_1 \otimes (L_2, U_1, R_2)} (4). \\
\end{alignat*}

\item With middle vertex $(8)$:
\begin{alignat*}{3}
&(2) \xrightarrow{U_1 \otimes (R_1, U_2, L_1)} (5) \quad && (2) \xrightarrow{L_1 U_1 \otimes (R_1, U_2, L_1)} (6)  \quad && (2) \xrightarrow{L_1 L_2 \otimes (R_1, U_2, L_1)} (7)  \\
&(3) \xrightarrow{R_1 U_2 \otimes (L_2, U_1, R_2)} (5) \quad && (3) \xrightarrow{U_1 U_2 \otimes (L_2, U_1, R_2)} (6) \quad && (3) \xrightarrow{L_2 U_2 \otimes (L_2, U_1, R_2)} (7)  \\
&(4) \xrightarrow{R_2 R_1 \otimes (L_2, U_1, R_2)} (5)  \quad && (4) \xrightarrow{R_2 U_1 \otimes (L_2, U_1, R_2)} (6) \quad && (4) \xrightarrow{U_2 \otimes (L_2, U_1, R_2)} (7). \\
\end{alignat*}
\end{itemize}

For the $S_3$ terms, note that when multiplied by the idempotent $\Ib_{AC}$, the nontrivial factorizations of the elements $U_1$ and $U_2$ of $\B(2)$ are $U_1 = (R_1) (L_1)$ and $U_2 = (L_2) (R_2)$. The nontrivial factorizations of ${_{AC}}(L_2 U_1)_{AB}$ are $L_2 U_1 = (L_2) (U_1) = (U_1) (L_2) = (R_1)(L_1 L_2)$ (for primitive idempotents $\Ib_{\x}$ and $\Ib_{\x'}$ of $\B(2)$, we write an element $a \in \B(2)$ as $a = {_{\x}}a_{\x'}$ if $a$ is a sum of paths from $\Ib_{\x}$ to $\Ib_{\x'}$ in the quiver defining $\B(2)$). The nontrivial factorizations of ${_{BC}}(L_1 U_2)_{AC}$ are $L_1 U_2 = (L_1) (U_2) = (U_2) (L_1) = (L_1 L_2) (R_2)$. 

Each green arrow in Figure~\ref{fig:UnsimplifiedDABimod3} labeled $(T0)$--$(T6)$ represents three terms of $\delta^1_3$ with algebra inputs $(U_1, U_2)$, $(L_2U_1, R_2)$, and $(R_1, L_1 U_2)$. Those with algebra inputs $(U_1,U_2)$ contribute to the sum over $S_3$ for the input sequences $(R_1, L_1, U_2)$ and $(U_1, L_2, R_2)$. The terms with algebra inputs $(L_2 U_1, R_2)$ contribute to the sum over $S_3$ for the input sequences $(L_2, U_1, R_2)$, $(U_1, L_2, R_2)$, and $(R_1, L_1 L_2, R_2)$. Finally, the terms with algebra inputs $(R_1, L_1 U_2)$ contribute to the sum over $S_3$ for the input sequences $(R_1, L_1, U_2)$, $(R_1, U_2, L_1)$, and $(R_1, L_1 L_2, R_2)$. 

All of the above contributions to the $S_3$ sum for a given input sequence are the same, and they cancel in pairs except when the input sequence is $(L_2, U_1, R_2)$ or $(R_1, U_2, L_1)$. Thus, the $S_3$ terms are as follows:
\begin{alignat*}{1}
& (1) \xrightarrow{R_1 U_2 \otimes \{ (L_2, U_1, R_2), (R_1, U_2, L_1)\}} (2) \\
& (1) \xrightarrow{L_2 U_1 \otimes \{ (L_2, U_1, R_2), (R_1, U_2, L_1)\}} (4) \\
& (2) \xrightarrow{L_1 L_2 \otimes \{ (L_2, U_1, R_2), (R_1, U_2, L_1)\}} (7) \\
& (3) \xrightarrow{R_1 U_2 \otimes \{ (L_2, U_1, R_2), (R_1, U_2, L_1)\}} (5) \\
& (3) \xrightarrow{U_1 U_2 \otimes \{ (L_2, U_1, R_2), (R_1, U_2, L_1)\}} (6) \\
& (4) \xrightarrow{R_2 R_1 \otimes \{ (L_2, U_1, R_2), (R_1, U_2, L_1)\}} (5) \\
& (4) \xrightarrow{R_2 U_1 \otimes \{ (L_2, U_1, R_2), (R_1, U_2, L_1)\}} (6) \\
& (5) \xrightarrow{L_1 U_2 \otimes \{ (L_2, U_1, R_2), (R_1, U_2, L_1)\}} (8) \\
& (7) \xrightarrow{R_2 U_1 \otimes \{ (L_2, U_1, R_2), (R_1, U_2, L_1)\}} (8). \\
\end{alignat*}
The $S_1$ and $S_3$ terms sum to zero, when taken together, so the $i=4$ sums are zero.

The $i = 5$ sums have $S_1$ terms which come from composable pairs of two $\delta^1_3$ (green) arrows in Figure~\ref{fig:UnsimplifiedDABimod3}. They also have $S_1$ terms which come from composable pairs of a $\delta^1_1$ (blue) arrow and a $\delta^1_5$ (teal) arrow. There are no $S_3$ terms because no algebra input of a $\delta^1_4$ (red) arrow can be factored nontrivially, in contrast with the summand of $\X^{DA}$ over $(\B(2,1),\B(2,1))$. 

Since each green arrow represents three terms of $\delta^1_3$, each composition of two green arrows represents nine terms, with nine different algebra input sequences. These input sequences are:
\begin{alignat*}{1}
& (U_1, U_2, U_1, U_2) \\
& (U_1, U_2, L_2 U_1, R_2) \\
& (U_1, U_2, R_1, L_1 U_2) \\
& (L_2 U_1, R_2, U_1, U_2) \\
& (L_2 U_1, R_2, L_2 U_1, R_2) \\
& (L_2 U_1, R_2, R_1, L_1 U_2) \\
& (R_1, L_1 U_2, U_1, U_2) \\
& (R_1, L_1 U_2, L_2 U_1, R_2) \\
& (R_1, L_1 U_2, R_1, L_1 U_2)
\end{alignat*}

Denote this set of nine sequences by $*$. In this notation, the $S_1$ terms coming from two green arrows are as follows.

\begin{itemize}
\item With middle vertex $(1)$: none. 

\item With middle vertex $(2)$:
\begin{alignat*}{1}
(1) \xrightarrow{L_2 U_1 U_2 \otimes *} (7) \\
\end{alignat*}

\item With middle vertex $(3)$: none. 

\item With middle vertex $(4)$:
\begin{alignat*}{1}
&(1) \xrightarrow{R_1 U_1 U_2 \otimes *} (5) \\
&(1) \xrightarrow{U_1^2 U_2 \otimes *} (6) \\
\end{alignat*}

\item With middle vertex $(5)$:
\begin{alignat*}{1}
&(3) \xrightarrow{U_1 U_2^2 \otimes *} (8) \\
&(4) \xrightarrow{R_2 U_1 U_2 \otimes *} (8) \\
\end{alignat*}

\item With middle vertex $(6)$: none. 

\item With middle vertex $(7)$:
\begin{alignat*}{1}
&(2) \xrightarrow{L_1 U_1 U_2 \otimes *} (8) \\
\end{alignat*}

\item With middle vertex $(8)$: none.
\end{itemize}

The teal edge labeled $(T7)$ in Figure~\ref{fig:UnsimplifiedDABimod3} represents nine terms of $\delta^1_5$, with $9$ different algebra input sequences. In fact, these sequences are exactly the ones in the set $*$. Thus, in the above notation, the $S_3$ terms coming from a blue and a teal arrow are given as follows. 
\begin{itemize}
\item With middle vertex $(1)$:
\begin{alignat*}{1}
&(2) \xrightarrow{L_1 U_1 U_2 \otimes *} (8) \\
&(3) \xrightarrow{U_1 U_2^2 \otimes *} (8) \\
&(4) \xrightarrow{R_2 U_1 U_2 \otimes *} (8) \\
\end{alignat*}

\item With middle vertex $(2)$--$(7)$: none. 

\item With middle vertex $(8)$:
\begin{alignat*}{1}
&(1) \xrightarrow{R_1 U_1 U_2 \otimes *} (5) \\
&(1) \xrightarrow{U_1^2 U_2 \otimes *} (6) \\
&(1) \xrightarrow{L_2 U_1 U_2 \otimes *} (7) \\
\end{alignat*}
\end{itemize}
The terms coming from a blue and a teal arrow and from two green arrows cancel in pairs, so the $i = 5$ sums are zero.

The $i=6$ sums have $S_1$ terms which come from composable pairs of $\delta^1_3$ (green) arrows and $\delta^1_4$ (red) arrows (in either order) in Figure~\ref{fig:UnsimplifiedDABimod3}. They also have $S_3$ terms which come from factorizing an algebra input of a $\delta^1_5$ (teal) arrow. The $S_1$ terms are as follows.
\begin{itemize}
\item With middle vertex $(1)$: none. 
\item With middle vertex $(2)$: 
\begin{alignat*}{1}
&(1) \xrightarrow{U_1 U_2 \otimes (U_1, U_2, R_1, U_2, L_1)} (8) \\
&(1) \xrightarrow{U_1 U_2 \otimes (L_2 U_1, R_2, R_1, U_2, L_1)} (8) \\
&(1) \xrightarrow{U_1 U_2 \otimes (R_1, L_1 U_2, R_1, U_2, L_1)} (8). \\
\end{alignat*}
\item With middle vertex $(3)$: none. 
\item With middle vertex $(4)$:
\begin{alignat*}{1}
&(1) \xrightarrow{U_1 U_2 \otimes (U_1, U_2, L_2, U_1, R_2)} (8) \\
&(1) \xrightarrow{U_1 U_2 \otimes (L_2 U_1, R_2, L_2, U_1, R_2)} (8) \\
&(1) \xrightarrow{U_1 U_2 \otimes (R_1, L_1 U_2, L_2, U_1, R_2)} (8). \\
\end{alignat*}
\item With middle vertex $(5)$:
\begin{alignat*}{1}
&(1) \xrightarrow{U_1 U_2 \otimes (R_1, U_2, L_1, U_1, U_2)} (8) \\
&(1) \xrightarrow{U_1 U_2 \otimes (R_1, U_2, L_1, L_2 U_1, R_2)} (8) \\
&(1) \xrightarrow{U_1 U_2 \otimes (R_1, U_2, L_1, R_1, L_1 U_2)} (8). \\
\end{alignat*}
\item With middle vertex $(6)$: none. 
\item With middle vertex $(7)$:
\begin{alignat*}{1}
&(1) \xrightarrow{U_1 U_2 \otimes (L_2, U_1, R_2, U_1, U_2)} (8) \\
&(1) \xrightarrow{U_1 U_2 \otimes (L_2, U_1, R_2, L_2 U_1, R_2)} (8) \\
&(1) \xrightarrow{U_1 U_2 \otimes (L_2, U_1, R_2, R_1, L_1 U_2)} (8). \\
\end{alignat*}
\item With middle vertex $(8)$: none.
\end{itemize}

For the $S_3$ terms, we analyze the possible factorizations of the inputs. For $(U_1, U_2, U_1, U_2)$, we have 
\begin{alignat*}{2}
&(R_1, L_1, U_2, U_1, U_2) \quad &&(U_1, L_2, R_2, U_1, U_2) \\
&(U_1, U_2, R_1, L_1, U_2) \quad && (U_1, U_2, U_1, L_2, R_2) \\
\end{alignat*}

For $(L_2 U_1, R_2, U_1, U_2)$, we have 
\begin{alignat*}{2}
&(L_2, U_1, R_2, U_1, U_2) \quad &&(U_1, L_2, R_2, U_1, U_2) \\
&(R_1, L_1 L_2, R_2, U_1, U_2) \quad &&(L_2 U_1, R_2, R_1, L_1, U_2) \\
&(L_2 U_1, R_2, U_1, L_2, R_2).
\end{alignat*} 

For $(R_1, L_1 U_2, U_1, U_2)$, we have 
\begin{alignat*}{2}
&(R_1, L_1, U_2, U_1, U_2)  \quad && (R_1, U_2, L_1, U_1, U_2)\\
&(R_1, L_1 L_2, R_2, U_1, U_2) \quad && (R_1, L_1 U_2, R_1, L_1, U_2) \\
&(R_1, L_1 U_2, U_1, L_2, R_2). 
\end{alignat*}

For $(U_1, U_2, L_2 U_1, R_2)$, we have 
\begin{alignat*}{2}
&(R_1, L_1, U_2, L_2 U_1, R_2) \quad && (U_1, L_2, R_2, L_2 U_1, R_2)\\
&(U_1, U_2, L_2, U_1, R_2) \quad && (U_1, U_2, U_1, L_2, R_2) \\
&(U_1, U_2, R_1, L_1 L_2, R_2). 
\end{alignat*}

For $(L_2 U_1, R_2, L_2 U_1, R_2)$, we have 
\begin{alignat*}{2}
&(L_2, U_1, R_2, L_2 U_1, R_2) \quad && (U_1, L_2, R_2, L_2 U_1, R_2) \\
&(R_1, L_1 L_2, R_2, L_2 U_1, R_2) \quad &&(L_2 U_1, R_2, L_2, U_1, R_2) \\
&(L_2 U_1, R_2, U_1, L_2, R_2) \quad &&(L_2 U_1, R_2, R_1, L_1 L_2, R_2).
\end{alignat*}
. 

For $(R_1, L_1 U_2, L_2 U_1, R_2)$, we have
\begin{alignat*}{2}
&(R_1, L_1, U_2, L_2 U_1, R_2) \quad && (R_1, U_2, L_1, L_2 U_1, R_2) \\
&(R_1, L_1 L_2, R_2, L_2 U_1, R_2) \quad && (R_1, L_1 U_2, L_2, U_1, R_2)\\
&(R_1, L_1 U_2, U_1, L_2, R_2) \quad && (R_1, L_1 U_2, R_1, L_1 L_2, R_2).
\end{alignat*}

For $(U_1, U_2, R_1, L_1 U_2)$, we have 
\begin{alignat*}{2}
&(R_1, L_1, U_2, R_1, L_1 U_2) \quad && (U_1, L_2, R_2, R_1, L_1 U_2) \\
&(U_1, U_2, R_1, L_1, U_2) \quad && (U_1, U_2, R_1, U_2, L_1) \\
&(U_1, U_2, R_1, L_1 L_2, R_2). 
\end{alignat*}

For $(L_2 U_1, R_2, R_1, L_1 U_2)$, we have 
\begin{alignat*}{2}
&(L_2, U_1, R_2, R_1, L_1 U_2) \quad && (U_1, L_2, R_2, R_1, L_1 U_2) \\
&(R_1, L_1 L_2, R_2, R_1, L_1 U_2) \quad && (L_2 U_1, R_2, R_1, L_1, U_2) \\
&(L_2 U_1, R_2, R_1, U_2, L_1) \quad && (L_2 U_1, R_2, R_1, L_1 L_2, R_2).
\end{alignat*} 

Finally, for $(R_1, L_1 U_2, R_1, L_1 U_2)$, we have 
\begin{alignat*}{2}\
&(R_1, L_1, U_2, R_1, L_1 U_2) \quad && (R_1, U_2, L_1, R_1, L_1 U_2) \\
&(R_1, L_1 L_2, R_2, R_1, L_1 U_2) \quad && (R_1, L_1U_2, R_1, L_1, U_2) \\
&(R_1, L_1 U_2, R_1, U_2, L_1) \quad && (R_1, L_1 U_2, R_1, L_1 L_2, R_2).
\end{alignat*}

Many pairs of these input factorizations cancel; the remaining ones are:
\begin{alignat*}{2}
&(L_2, U_1, R_2, U_1, U_2) \quad && (R_1, U_2, L_1, U_1, U_2) \\
&(U_1, U_2, L_2, U_1, R_2) \quad && (L_2, U_1, R_2, L_2 U_1, R_2) \\
&(L_2 U_1, R_2, L_2, U_1, R_2) \quad && (R_1, U_2, L_1, L_2 U_1, R_2) \\
&(R_1, L_1 U_2, L_2, U_1, R_2) \quad && (U_1, U_2, R_1, U_2, L_1) \\
&(L_2, U_1, R_2, R_1, L_1 U_2) \quad && (L_2 U_1, R_2, R_1, U_2, L_1) \\
&(R_1, U_2, L_1, R_1, L_1 U_2) \quad && (R_1, L_1 U_2, R_1, U_2, L_1). \\
\end{alignat*}
Thus, the $S_3$ terms cancel the $S_1$ terms, and the $i = 6$ sums are zero.

The $i = 7$ sums are zero; there are no composable pairs of $\delta^1_3$ (green) and $\delta^1_5$ (teal) arrows (in either order), or composable pairs of two $\delta^1_4$ (red) arrows, in Figure~\ref{fig:UnsimplifiedDABimod3}. The $i = 8$ sums are zero; there are no composable pairs of $\delta^1_4$ (red) and $\delta^1_5$ (teal) arrows (in either order). The $i = 9$ sums are zero; there are no composable pairs of $\delta^1_5$ (teal) arrows. The sums are zero for $i > 9$ as well. Thus, the summand of $\X^{DA}$ over $(\B(2,2),\B(2,2))$ is a valid $DA$ bimodule, so $\X^{DA}$ is a valid $DA$ bimodule.
\end{proof}

\subsection{Simplifying the local \texorpdfstring{$DA$}{DA} bimodule}

An inspection of Figure~\ref{fig:UnsimplifiedDABimod12} and Figure~\ref{fig:UnsimplifiedDABimod3} reveals three cancellable pairs, as defined in Section~\ref{sec:HowToSimplify}. We conclude that the $DA$ bimodules shown graphically in Figure~\ref{fig:SimplifiedDABimod12} and Figure~\ref{fig:SimplifiedDABimod3}, which are obtained from the summands of $\X^{DA}$ by the procedure described in Section~\ref{sec:HowToSimplify}, are valid $DA$ bimodules homotopy equivalent to the summands of $\X^{DA}$. We will call these three summands together $\Xt^{DA}$. The grading data of basis elements of $\Xt^{DA}$ agrees with the grading data of the corresponding basis elements of $\X^{DA}$.

\begin{figure}
	\includegraphics[scale=0.7]{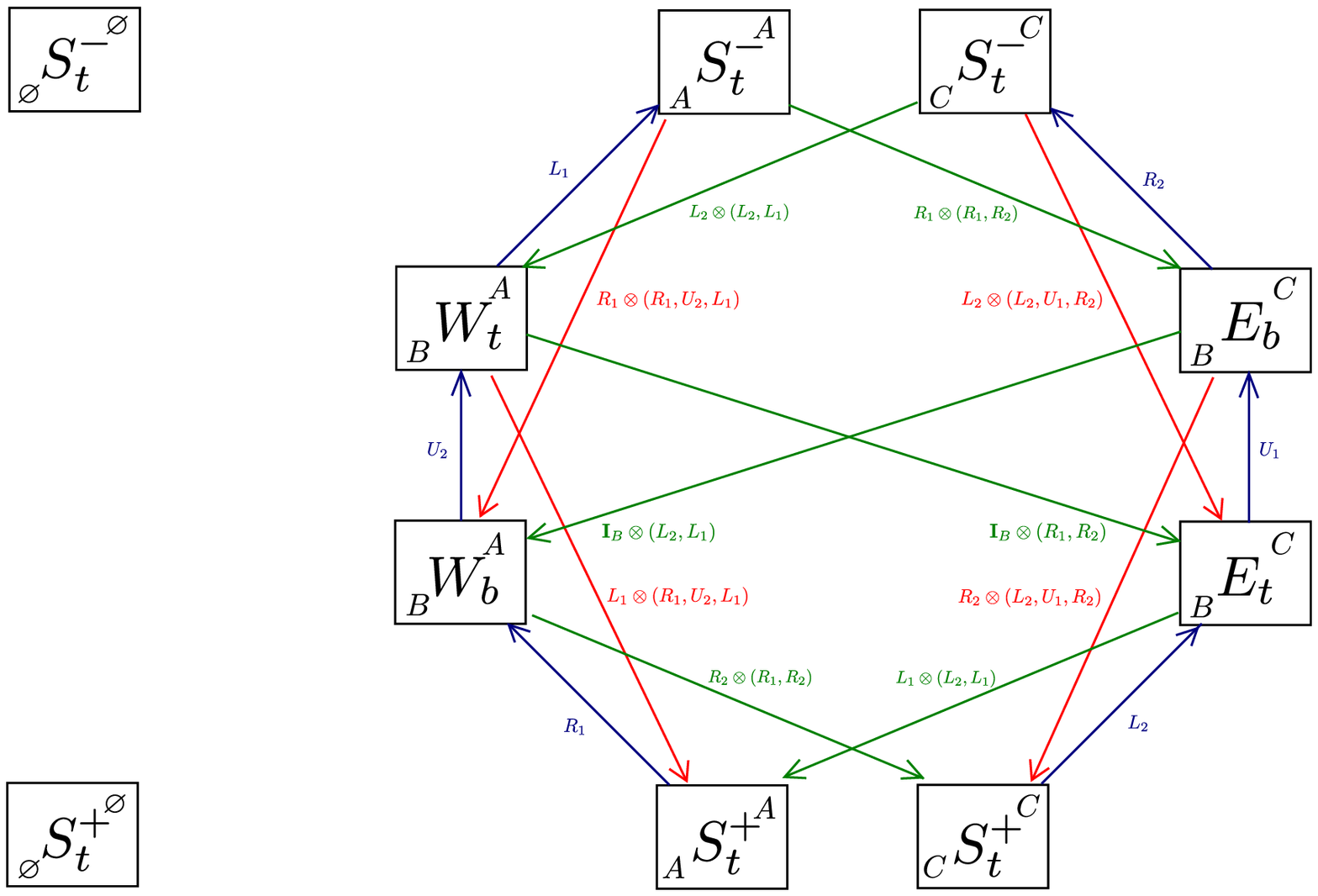}
	\caption{First two summands of the simplified local DA bimodule $\Xt^{DA}$ for a singular crossing.}
	\label{fig:SimplifiedDABimod12}
\end{figure}

\begin{figure}
	\includegraphics[scale=0.7]{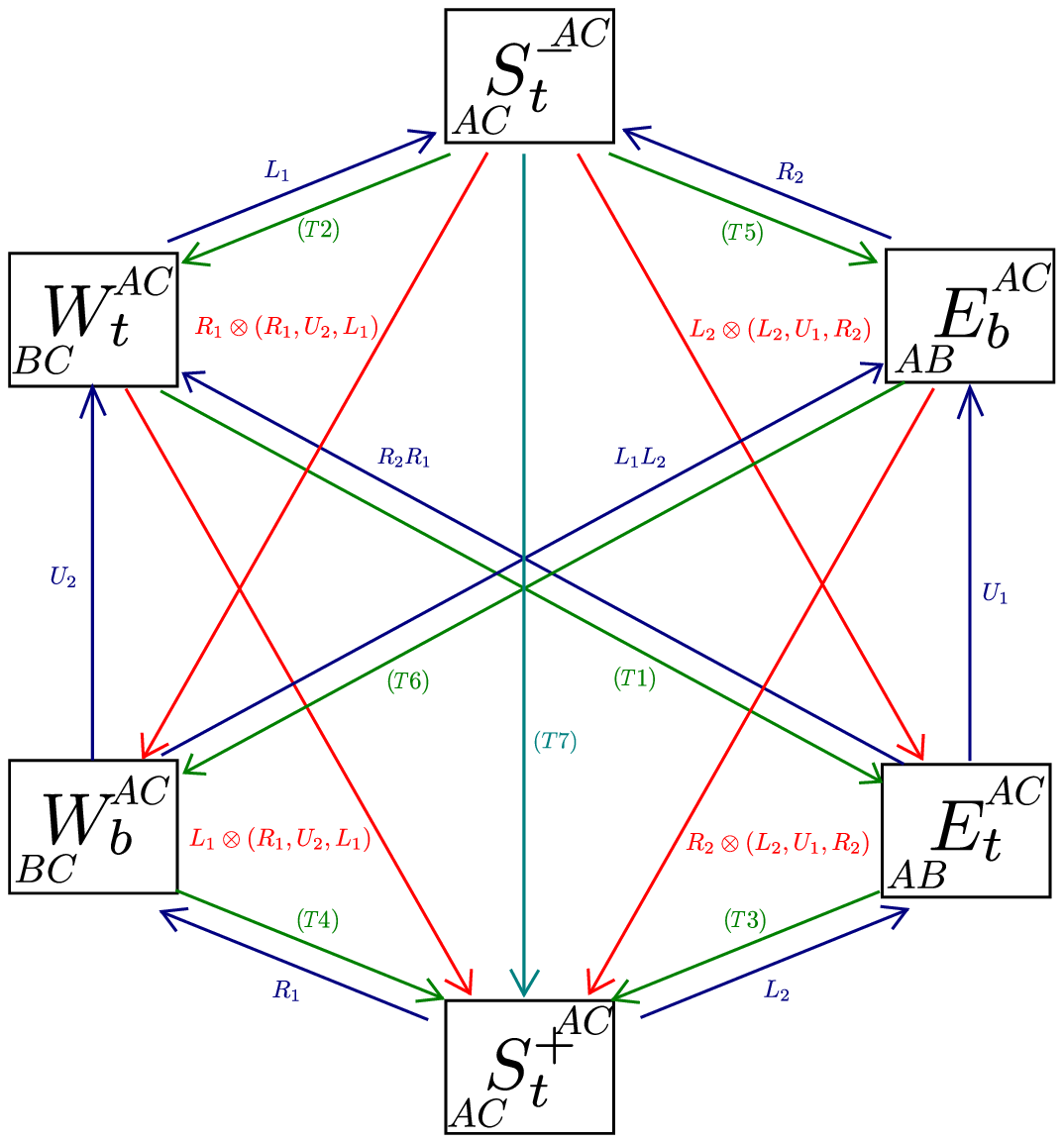}
	\caption{Third summand of the simplified local DA bimodule $\Xt^{DA}$ for a singular crossing.}
	\label{fig:SimplifiedDABimod3}
\end{figure}

\subsection{Symmetries in the local \texorpdfstring{$DA$}{DA} bimodule}

Like $\Xt^{DD}$, the summand of the bimodule $\Xt^{DA}$ over $(\B(2,1),\B(2,1))$ has two symmetries $\Rc$ and $o$. The summand over $(\B(2,2),\B(2,2))$ only has the symmetry $\Rc$, while $o$ exchanges this summand with a modified version where the actions are slightly different.

The basis elements of $\Xt^{DA}$ naturally correspond to the basis elements of $\Xt^{DD}$ (note that the right I-states of basis elements of $\Xt^{DA}$ are the complements of the right I-states of basis elements of $\Xt^{DD}$). We define $\Rc$ and $o$ on $\Xt^{DA}$ as in Section~\ref{sec:DDSymm}. The $DA$ bimodule operations on $\Xt^{DA}$ respect the symmetry $\Rc$: for $i \geq 1$, the square
\[
\xymatrix{
\Xt^{DA} \otimes_{\I(2)} \B(2)^{\otimes(i-1)} \ar[rr]^{\Rc \otimes (\Rc^{\otimes(i-1)})} \ar[d]_{\delta^1_i} & & \Xt^{DA} \otimes_{\I(2)} \B(2)^{\otimes(i-1)} \ar[d]^{\delta^1_i} \\
\B(2) \otimes_{\I(2)} \Xt^{DA} \ar[rr]_{\Rc \otimes \Rc} & & \B(2) \otimes_{\I(2)} \Xt^{DA}
}
\]
commutes, where $\delta^1_i$ is the $i^{th}$ $DA$ operation on $\Xt^{DA}$. Equivalently, $\Rc$ gives a $DA$ bimodule isomorphism from $\Xt^{DA}$ to $\Ind_{\Rc} \Rest_{\Rc} \Xt^{DA}$, where this latter bimodule has operations defined by applying $\Rc^{\otimes(i-1)}$ to algebra inputs in $\B(2)^{\otimes(i-1)}$, applying the operation $\delta^1_i$ on $\Xt^{DD}$, and then applying $\Rc$ to the algebra output (see \cite[Section 2.4.2]{LOTBimodules}). Graphically, Figure~\ref{fig:SimplifiedDABimod12} and Figure~\ref{fig:SimplifiedDABimod3} are unchanged by reflecting about the vertical axis of the graph for each summand while applying $\Rc$ to each input and output algebra label. 

Similarly, we view $o$ as a map from $\Xt^{DA}$ to its dual $(\Xt^{DA})^{\vee}$ as defined in Section~\ref{sec:DualsOfModules}. For the summand of $\Xt^{DA}$ over $(\B(2,1),\B(2,1))$, the square
\begin{equation}\label{eq:OSymmetryOnDA}
\xymatrix{
\Xt^{DA} \otimes \B(2)^{\otimes(i-1)} \ar[rrr]^{o \otimes o^{\otimes(i-1)}} \ar[d]_{\delta^1_i} & & & (\Xt^{DA})^{\vee} \otimes (\B(2)^{\op})^{\otimes(i-1)} \ar[d]^{(\delta^1_i)^{\vee}} \\
\B(2) \otimes \Xt^{DA} \ar[rrr]_{o \otimes o} & & & \B(2)^{\op} \otimes (\Xt^{DA})^{\vee}
}
\end{equation}
commutes. Equivalently, $o$ gives a $DA$ bimodule isomorphism from this summand $\Xt^{DA}$ to the corresponding summand of $\Ind_o \Rest_o (\Xt^{DA})^{\vee}$. Graphically, Figure~\ref{fig:SimplifiedDABimod12} is unchanged by reflecting about the horizontal axis of the graph for this summand while applying $o$ to each input and output algebra label, reversing the directions of arrows, and reversing the order of each algebra input sequence (this order reversal is implicit in $(\delta^1_i)^{\vee}$ as discussed in Section~\ref{sec:DualsOfModules}).

\begin{remark}\label{rem:OSymmetry}
The square \eqref{eq:OSymmetryOnDA} for the summand of $\Xt^{DA}$ over $(\B(2,2),\B(2,2))$ does not commute. Note that there is an asymmetry in the definition of this summand, not visible in the corresponding $DD$ bimodule. The green $\delta^1_3$ edges in Figure~\ref{fig:UnsimplifiedDABimod3} (and thus Figure~\ref{fig:SimplifiedDABimod3}) have algebra inputs $(U_1,U_2)$, $(L_2U_1, R_2)$, and $(R_1, L_1U_2)$, but we could have equally well used $(U_2,U_1)$, $(L_2, R_1 U_1)$, and $(R_1 U_2, L_1)$. The teal $\delta^1_5$ edge is similar. If we let $(\delta')^1_i$ denote the $DA$ operation on $\Xt^{DA}$ defined using this alternate pattern of algebra inputs instead of the original pattern when $i = 3,5$, then the square \eqref{eq:OSymmetryOnDA} commutes when the left edge is replaced by $(\delta')^1_i$. In other words, $o$ gives an isomorphism between this modification of the $\delta^1_3$ and $\delta^1_5$ actions on this summand of $\Xt^{DD}$ and the corresponding summand of the bimodule $\Ind_o \Rest_o (\Xt^{DA})^{\vee}$. 

Note that the alternate version of $\X^{DA}$ is well-defined even though it does not admit a symmetry corresponding to $o$; we do not see any reason to prefer the set \[
\{(U_1,U_2), (L_2U_1, R_2), (R_1, L_1U_2)\}
\]
over the set 
\[
\{(U_2,U_1), (L_2, R_1 U_1), (R_1 U_2, L_1)\}.
\] 
\end{remark}

As with $\X^{DD}$, the unsimplified bimodule $\X^{DA}$ has a symmetry corresponding to the composition of $\Rc$ and $o$ (with the same caveat as in Remark~\ref{rem:OSymmetry}); graphically, Figure~\ref{fig:UnsimplifiedDABimod12} and Figure~\ref{fig:UnsimplifiedDABimod3} are unchanged under $180^{\circ}$ rotation combined with applying $\Rc o$ to each algebra label, reversing the directions of edges, and reversing the orders of sequences of algebra inputs for $\delta^1_i$ edges with $i \geq 3$.

\subsection{Obtaining \texorpdfstring{$DD$}{DD} bimodules from \texorpdfstring{$DA$}{DA} bimodules}

Let $\K$ denote the canonical (left, left) $DD$ bimodule over $(\B(2),\B^!(2))$ from Definition~\ref{def:KoszulDualizing}. Since $\X^{DA}$ has no $\delta^1_j$ operations for $j > 5$, the box tensor product $\X^{DA} \boxtimes \K$ makes sense. We have a natural correspondence between basis elements of $\X^{DD}$ and $\X^{DA}$ and thus between basis elements of $\X^{DD}$ and $\X^{DA} \boxtimes \K$. 

\begin{proposition}\label{prop:LocalDDFromLocalDA}
The natural correspondence of basis elements gives isomorphisms $\X^{DD} \cong \X^{DA} \boxtimes \K$ and $\Xt^{DD} \cong \Xt^{DA} \boxtimes \K$ of $DD$ bimodules over $(\B(2),\B^!(2))$.
\end{proposition}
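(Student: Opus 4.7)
The plan is a direct verification from the box tensor product formula of Definition~\ref{DADDBoxTensorDef}, together with the observation that $\X^{DA}$ was engineered so that tensoring with $\K$ reproduces $\X^{DD}$. I first set up the basis correspondence: each basis element $x$ of $\X^{DA}$ has a right idempotent $\Ib_{\y}$, and I identify it with $x \otimes k_{\y} \in \X^{DA} \boxtimes \K$. Since $k_{\y}$ has idempotents $(\Ib_{\y},\Ib_{\y^c})$ where $\y^c$ is the complement of $\y$ in $\{0,1,2\}$, the element $x \otimes k_{\y}$ has second idempotent $\Ib_{\y^c}$, matching the second idempotent of the basis element of $\X^{DD}$ bearing the same name. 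The homological and (un)refined Alexander degrees also agree because $\K$ carries trivial gradings and the degrees on $\X^{DA}$ and $\X^{DD}$ were defined to coincide basis-element by basis-element.

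Next, for each basis element $x \otimes k_{\y}$, I compute $\delta^{\boxtimes}(x \otimes k_{\y})$ as a sum over $j = 1, \ldots, 5$ (higher $j$ contribute zero since $\X^{DA}$ has no $\delta^1_j$ for $j > 5$). The $j = 1$ contribution is $\delta^1_1(x) = \sum a \otimes y'$, giving terms $(a \otimes \Ib_{\y^c}) \otimes (y' \otimes k_{\y})$; these account precisely for the summands of $\delta^1_{\X^{DD}}$ whose $\B^!(2)$--output is an idempotent. For $j \geq 2$, each term of $\delta^{j-1}(k_{\y})$ has the form $(a'_1 \otimes a''_1) \otimes \cdots \otimes (a'_{j-1} \otimes a''_{j-1}) \otimes k_{\y''}$, with $(a'_i, a''_i)$ drawn from the Koszul-dual pairs $(L_i,R_i)$, $(R_i,L_i)$, $(U_i,C_i)$. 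Pairing the $\A' = \B(2)$ outputs with the algebra-input patterns prescribed by $\delta^1_j$ on $\X^{DA}$ (such as $(R_1,R_2)$, $(L_2,L_1)$, $(U_1,U_2)$, $(L_2U_1, R_2)$, $(R_1, L_1U_2)$) and multiplying the $\A''= \B^!(2)$ factors yields exactly the non-idempotent $\B^!(2)$--outputs appearing in $\delta^1_{\X^{DD}}$, such as $L_1L_2$, $R_2R_1$, $C_1C_2$, $U_1C_2$, and $C_1U_2$. Running this matching through all three summands gives the first isomorphism.

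The main obstacle is pure bookkeeping, especially in the third summand where the $\delta^1_5$ action (with its nine algebra-input sequences collected as $(T7)$ in Figure~\ref{fig:UnsimplifiedDABimod3}) must pair with four-fold iterates of $\delta^1$ on $\K$ to produce precisely the $U_1U_2 \otimes C_1C_2$--type terms of $\delta^1$ on ${^{\B(2,2),\B^!(2,1)}}\X^{DD}$. In fact the elaborate pattern of inputs to $\delta^1_3, \delta^1_4, \delta^1_5$ on $\X^{DA}$ is forced by this matching, so the verification is largely a consistency check. The second isomorphism $\Xt^{DD} \cong \Xt^{DA} \boxtimes \K$ follows from the first without further heavy computation: the cancellable pairs used in Section~\ref{sec:LocalDA} to pass from $\X^{DA}$ to $\Xt^{DA}$ are connected by identity edges in $\delta^1_1$, which become identity edges in $\delta^{\boxtimes}$ (tensored with $\Ib_{\y^c}$ on the $\B^!(2)$ side) in $\X^{DA} \boxtimes \K \cong \X^{DD}$; these are exactly the cancellable pairs used to form $\Xt^{DD}$. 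A direct check (from the explicit zig-zag formula in Definition~\ref{def:DDCancelling} and the perturbation-theoretic description of \cite[Lemma 2.12]{OSzNew}) shows that cancelling before tensoring and cancelling after tensoring produce the same $DD$ bimodule, yielding $\Xt^{DD} \cong \Xt^{DA} \boxtimes \K$.
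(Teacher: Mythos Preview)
Your approach is the same as the paper's: set up the basis correspondence (checking idempotents and degrees), then verify edge-by-edge that $\delta^{\boxtimes}$ reproduces the $DD$ operation $\delta^1$ on $\X^{DD}$ by feeding the Koszul-dual pairs from $\K$ into the $\delta^1_j$ of $\X^{DA}$. The paper phrases this as a simple translation rule on edge labels: for each $\delta^1_j$ edge with input sequence $(a'_1,\ldots,a'_{j-1})$, discard the edge if some $a'_l$ is not literally one of $R_i$, $L_i$, $U_i$; otherwise replace the input sequence by the product in $\B^!(2)$ obtained from $R_i \mapsto L_i$, $L_i \mapsto R_i$, $U_i \mapsto C_i$.

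One point in your write-up is wrong and would cause trouble if you actually carried out the check. The outputs of $\delta^1$ on $\K$ are only the pairs $(R_i,L_i)$, $(L_i,R_i)$, $(U_i,C_i)$; there is no way to obtain a composite element like $L_2U_1$ or $L_1U_2$ as a single $\B(2)$-output of $\K$. Hence the $\delta^1_3$ input patterns $(L_2U_1,R_2)$ and $(R_1,L_1U_2)$ contribute \emph{nothing} to $\delta^{\boxtimes}$; only the $(U_1,U_2)$ pattern survives and produces $C_1C_2$. Likewise the $\delta^1_5$ action contributes nothing at all: among its nine input sequences, only $(U_1,U_2,U_1,U_2)$ consists entirely of simple generators, and this gives $C_1C_2C_1C_2 = 0$ in $\B^!(2)$ since $C_i^2 = 0$. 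So the ``main obstacle'' you flag is a non-issue: there is no edge from $(1)$ to $(8)$ in ${^{\B(2,2),\B^!(2,1)}}\X^{DD}$, and none should be produced. The $C_1C_2$ terms in the $DD$ bimodule all come from $\delta^1_3$ with input $(U_1,U_2)$, not from $\delta^1_5$.

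For the simplified statement, the paper simply repeats the same edge-label comparison on the simplified figures rather than invoking a general ``cancellation commutes with $\boxtimes$'' principle; either route is fine.
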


\begin{proof}
The correspondence of basis elements is an $(\I(2),\I(2))$--bilinear map preserving the degrees of all basis elements. We must check that the operation $\delta^1$ on $\X^{DD}$ agrees with $\delta^{\boxtimes}$ on $\X^{DA} \boxtimes \K$ under this correspondence. We can compute $\delta^{\boxtimes}$ as follows: for each edge of a directed graph in Figure~\ref{fig:UnsimplifiedDABimod12} and Figure~\ref{fig:UnsimplifiedDABimod3}, each term in the sequence of algebra inputs on the edge is either $R_i$, $L_i$, $U_i$, or something else. If any terms are not $R_i$, $L_i$, or $U_i$, the edge does not contribute to $\delta^{\boxtimes}$. Otherwise, we form an element of $\B^!(2)$ as the product (in order) of $L_i$ for each $R_i$ term in the sequence, $R_i$ for each $L_i$ term, and $C_i$ for each $U_i$ term. In the directed graph for $\X^{DA} \boxtimes \K$, the sequence of algebra inputs on this edge is replaced by the element of $\B^!(2)$ that was produced. One can check that following this procedure produces the graphs in Figure~\ref{fig:UnsimplifiedLocalBimod12} and Figure~\ref{fig:UnsimplifiedLocalBimod3} from the graphs in Figure~\ref{fig:UnsimplifiedDABimod12} and Figure~\ref{fig:UnsimplifiedDABimod3}. Comparing Figure~\ref{fig:SimplifiedDABimod12} and Figure~\ref{fig:SimplifiedDABimod3} with Figure~\ref{fig:SimplifiedLocalBimod12} and Figure~\ref{fig:SimplifiedLocalBimod3}, the same is true for the simplified bimodules.
\end{proof}

\begin{corollary}
The operation $\delta^1$ on $\X^{DD}$ is a well-defined $DD$ bimodule operation. The same is true for $\Xt^{DD}$.
\end{corollary}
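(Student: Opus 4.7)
The plan is to deduce the $DD$ structure relation for $\X^{DD}$ (and $\Xt^{DD}$) from the already-verified $DA$ structure relation for $\X^{DA}$ (and $\Xt^{DA}$), combined with the isomorphism in Proposition~\ref{prop:LocalDDFromLocalDA} and the fact that $\K$ is a valid $DD$ bimodule. The key point is that the box tensor product of a valid $DA$ bimodule with a valid $DD$ bimodule is automatically a valid $DD$ bimodule whenever the relevant sums are finite.

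First I would observe that $\K$ is a valid $DD$ bimodule over $(\B(2), \B^!(2))$ (from Ozsv\'ath--Szab\'o's work cited in Section~\ref{sec:CanonicalDD}) and that $\X^{DA}$ is a valid $DA$ bimodule (by the theorem proven earlier in this section). Because the operations $\delta^1_j$ on $\X^{DA}$ vanish for $j > 5$, the sum defining $\delta^{\boxtimes}$ on $\X^{DA} \boxtimes \K$ is finite for each input, so $\X^{DA} \boxtimes \K$ is well-defined as an $\I(2) \otimes \I(2)$--linear map. It is then a standard fact in bordered Floer homology (compatibility of the box tensor product as in \cite{LOTBimodules}) that $\X^{DA} \boxtimes \K$ with operation $\delta^{\boxtimes}$ satisfies the $DD$ bimodule relation: one computes $(\partial \otimes \id) \circ \delta^{\boxtimes} + (\mu \otimes \id) \circ (\id \otimes \delta^{\boxtimes}) \circ \delta^{\boxtimes}$ and expands using the $DA$ relation for $\X^{DA}$, the $DD$ relation for $\K$, and the Leibniz rule for $\partial$ on $\B(2) \otimes \B^!(2)$; all contributions cancel in pairs.

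Next I would invoke Proposition~\ref{prop:LocalDDFromLocalDA}, which identifies the $\I(2) \otimes \I(2)$--linear operation $\delta^1$ on $\X^{DD}$ with the operation $\delta^{\boxtimes}$ on $\X^{DA} \boxtimes \K$ under the natural basis correspondence. Since the latter satisfies the $DD$ bimodule relation by the previous paragraph, so does the former, proving that $\X^{DD}$ is a well-defined $DD$ bimodule. The argument for $\Xt^{DD}$ is identical, using the isomorphism $\Xt^{DD} \cong \Xt^{DA} \boxtimes \K$ and the already-established fact that $\Xt^{DA}$ is homotopy equivalent (hence isomorphic as a structural object satisfying the relations) to $\X^{DA}$ as a valid $DA$ bimodule. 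No genuine obstacle arises — the corollary is essentially a bookkeeping consequence of results already in place — but some care is needed in citing the box-tensor-product compatibility, since our grading and sign conventions (and the fact that $\K$ is a \emph{left--left} $DD$ bimodule rather than the standard \emph{left--right} flavor) differ slightly from those in \cite{LOTBimodules}.
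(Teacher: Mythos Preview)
Your proposal is correct and follows essentially the same route as the paper: use Proposition~\ref{prop:LocalDDFromLocalDA} to identify $\delta^1$ with $\delta^{\boxtimes}$, then invoke the standard fact that the box tensor product of a valid $DA$ bimodule with a valid $DD$ bimodule is a valid $DD$ bimodule. The paper additionally mentions, as an alternative for $\Xt^{DD}$, that it arises from $\X^{DD}$ by cancelling a valid cancellable pair; your phrasing about $\Xt^{DA}$ being ``homotopy equivalent (hence isomorphic as a structural object satisfying the relations)'' is slightly off---what you actually need is just that $\Xt^{DA}$ is itself a valid $DA$ bimodule, which follows from the simplification procedure in Section~\ref{sec:HowToSimplify}.
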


\begin{proof}
By Proposition~\ref{prop:LocalDDFromLocalDA}, $\delta^1$ corresponds to $\delta^{\boxtimes}$ which is a well-defined $DD$ bimodule operation. The same argument holds for $\Xt^{DD}$; alternatively, $\Xt^{DD}$ is obtained from $\X^{DD}$ by cancelling a valid cancellable pair as in Definition~\ref{def:DDCancelling}.
\end{proof}

Strictly speaking, we should have defined $\X^{DD}$ to be $\X^{DA} \boxtimes \K$; then the proof of Proposition~\ref{prop:LocalDDFromLocalDA} checks that the $DD$ bimodule operation $\delta^1$ on $\X^{DD}$ is accurately described by the directed graphs in Figure~\ref{fig:UnsimplifiedLocalBimod12} and Figure~\ref{fig:UnsimplifiedLocalBimod3}.

\section{The global \texorpdfstring{$DD$}{DD} bimodule for a singular crossing}\label{sec:GlobalDD}

\subsection{Definition as a module over the idempotent ring}

We want to define a $DD$ bimodule ${^{\B(n),\B^!(n)}}\X^{DD}_i$ for $1 \leq i \leq n-1$. Our local model will be the unsimplified local bimodule $\X^{DD}$. For convenience, let $\B := \B(n) \otimes \B^!(n)$. Similarly, let $\B_{\loc} := \B(2) \otimes \B^!(2)$. We will define $\X^{DD}_i$ as $(M,\delta^1)$, where $M$ is a left module over $\I := \I(n) \otimes \I(n)$ and $\delta^1\co M \to \B \otimes_{\I} M$ is an $\I$-linear map satisfying the $DD$ bimodule relations.

\begin{definition} Given $i$ with $1 \leq i \leq n-1$, we define an \emph{external I-state} to be a subset $\x_{\ext}$ of $S_{\external} = \{0,\ldots,i-2\} \cup \{i+2, \ldots,n\}$. Visually, $S_{\external}$ is the set of regions between and outside $n$ parallel strands except for the three regions adjacent to strands $i$ and $i+1$.
\end{definition}

\begin{definition}\label{def:GlobalDDAsVectSpace} As a vector space over $\F$, $M$ is defined to be a direct sum of copies of $\X^{DD}$ indexed by external I-states:
\[
M := \bigoplus_{\x_{\ext}} \X^{DD}
\]
where the direct sum runs over all external I-states $\x_{\ext}$. When we want to give explicit names to the summands, we will write $M$ as 
\[
M := \bigoplus_{\x_{\ext}} \X_{\x_{\ext}}^{DD}.
\]
\end{definition}

First we define gradings on $\X^{DD}_i$, i.e. on $M$.
\begin{definition}\label{def:GradingsOnGlobalDD}
Let $m$ be a basis element of $M$. The data of $m$ consist of an external I-state $\x_{\ext}$ and a basis element $m_{\loc}$ of the local $DD$ bimodule $\X^{DD}$, and we can write $m = (\x_{\ext}, m_{\loc})$.
\begin{itemize}
\item The homological grading on $M$ is a $\Z$ grading defined by $\m(\x_{\ext}, m_{\loc}) = \m(m_{\loc})$, where $\m(m_{\loc})$ is the homological degree from Definition~\ref{def:HomGrLocalDD}.
\item The first unrefined grading on $M$ is a $\Z^{2n}$ grading defined by $\deg^{\un}_1(\x_{\ext}, m_{\loc}) = \iota(\deg^{\un}_1(m_{\loc}))$, where $\deg^{\un}_1(m_{\loc})$ is the first unrefined degree from Definition~\ref{def:Ref1LocalDD} and $\iota\co \Z^4 \to \Z^{2n}$ is the inclusion of the summand spanned by the subset $\{\tau_i, \tau_{i+1}, \beta_i, \beta_{i+1}\}$ of $\Z^{2n} = \Z\langle \tau_1, \beta_1, \ldots, \tau_n, \beta_n\rangle$.
\item The second unrefined grading on $M$ is a $(\frac{1}{4}\Z)^{2n}$ grading defined as in the previous item, but using the second unrefined grading $\deg^{\un}_2$ from Definition~\ref{def:Ref2LocalDD}.
\item The refined grading on $M$ is a $(\frac{1}{4}\Z)^n$ grading obtained by applying the homomorphism of grading groups $\eta\co (\frac{1}{4}\Z)^{2n} \to (\frac{1}{8}\Z)^n$ to the second unrefined grading, where $e_1,\ldots,e_n$ are the standard generators of $\Z^n$ and $\eta$ sends $\tau_i$ and $\beta_i$ to $\frac{e_i}{2}$ (note that $\eta$ sends the degree of any basis element of $M$ to an element of $(\frac{1}{4}\Z)^n \subset (\frac{1}{8}\Z)^n$).
\item The single Alexander grading on $M$ is a $\frac{1}{2}\Z$ grading obtained by applying the sum map $(\frac{1}{4}\Z)^n \to \frac{1}{4}\Z$ to the refined grading (note that this sum map sends the degree of any basis element of $M$ to an element of $\frac{1}{2}\Z \subset \frac{1}{4}\Z$).
\end{itemize}
\end{definition}

Next we define how the idempotent ring $\I$ acts on $M$. The basis element $m_{\loc}$ has a local idempotent $\Ib_{1,\local} \otimes \Ib_{2,\local}$, where $\Ib_{j,\local} = \Ib_{\x_{j,\local}}$ for some local I-state $\x_{j,\local} \subset \{0,1,2\}$. From this information, plus the external I-state $\x_{\ext}$, we construct a primitive idempotent $\Ib_{1,\glob} \otimes \Ib_{2,\glob}$ in $\I$ below.

\begin{definition}
Define $\x_{1,\glob} \subset \{0,\ldots,n\}$ by setting $\x_{1,\glob} \cap \{i-1,i,i+1\} := \{j + i \,|\, j \in \x_{1,\local}\}$ and $\x_{1,\glob} \cap (\{0,\ldots,i-2\} \cup \{i+2,\ldots,n\}) := \x_{\ext}$. Visually, for the $n+1$ regions between and outside $n$ parallel strands, $\x_{1,\glob}$ contains a region adjacent to strands $i$ and $i+1$ if and only if $\x_{1,\local}$ contains the corresponding local region. For the rest of the regions, $\x_{1,\glob}$ contains them if and only if $\x_{\ext}$ does.

Similar,y define $\x_{2,\glob} \subset \{0,\ldots,n\}$ by setting $\x_{2,\glob} \cap \{i-1,i,i+1\} := \{j + i \,|\, j \in \x_{2,\local}\}$ and $\x_{2,\glob} \cap (\{0,\ldots,i-2\} \cup \{i+2,\ldots,n\}) := \{0,\ldots,n\} \setminus \x_{\ext}$. Visually, $\x_{2,\glob}$ contains a region adjacent to strands $i$ and $i+1$ if and only if $\x_{2,\local}$ contains the corresponding local region. For the rest of the regions, $\x_{2,\glob}$ contains them if and only if $\x_{\ext}$ does not.

For $j = 1,2$, let $\Ib_{j,\glob} = \Ib_{\x_{j,\glob}}$. We call $\Ib_{1,\glob}$ the \emph{first idempotent} of $m = (\x_{\ext}, m_{\loc})$ and $\Ib_{2,\glob}$ the \emph{second idempotent} of $m$. 
\end{definition}

For a primitive idempotent $\Ib_1 \otimes \Ib_2$ of $\I$, we can define $(\Ib_1 \otimes \Ib_2) \cdot m$ to be $m$ if $\Ib_1 = \Ib_{1,\glob}$ and $\Ib_2 = \Ib_{2,\glob}$, and to be zero otherwise.

\subsection{Local component of the differential}
Now that we have defined $M$ as an $\I$-module, we want to define $\delta^1\co M \to \B \otimes_{\I} M$.

\begin{definition} The differential $\delta^1$ is defined to be $\delta^1_{\local} + \delta^1_{\external}$, where the $\I$-linear maps $\delta^1_{\local}$ and $\delta^1_{\external}$ will be defined below.
\end{definition}

We define the local component $\delta^1_{\local}$ first (obtained from, but strictly speaking different from, the truly local differential on $\X^{DD}$ that we will call $\delta^1_{\loc}$). Note that given $n \geq 2$ and $i \in \{1,\ldots,n-1\}$, there are (non-unital) dg algebra homomorphisms $\Phi\co \B(2) \to \B(n)$ and $\Phi^!\co \B^!(2) \to \B^!(n)$ sending vertices $\x \subset \{0,1,2\}$ of the quivers defining $\B(2)$ and $\B^!(2)$ to the vertices $i-1+ \x \subset \{0,\ldots,n\}$ of the quivers defining $\B(n)$ and $\B^!(n)$. These maps send edges labeled $R_j$, $L_j$, $U_j$, and $C_j$ to edges labeled $R_{i+j-1}$, $L_{i+j-1}$, $U_{i+j-1}$, and $C_{i+j-1}$ respectively. The relations defining $\B(2)$ and $\B^!(2)$ are satisfied in $\B(n)$ and $\B^!(n)$. 

\begin{definition} Given $n$ and $i$ as above, let $\x_{\ext}$ be an external I-state. Define a map $\iota_{\x_{\ext}}\co \B_{\loc} \rightarrow \B$ by $\iota_{\x_{\ext}} = \Phi \otimes \Phi^!$ (recall that $\B_{\loc} = \B(2) \otimes \B^!(2)$ and $\B = \B(n) \otimes \B^!(n)$). The map $\iota_{\x_{\ext}}$ respects multiplication and differentials but not the units of the algebras $\B_{\loc}$ and $\B$.

\end{definition}

\begin{definition}
Given $n$ and $i$, let $\x_{\ext}$ be an external I-state. The map 
\[
\delta^1_{\local, \x_{\ext}}\co \X_{\x_{\ext}}^{DD} \to \B \otimes_{\I} \X_{\x_{\ext}}^{DD}
\]
is defined to be $(\iota_{\x_{\ext}} \otimes \id) \circ \delta^1_{\loc}$, where $\delta^1_{\loc}\co \X^{DD} \to (\B(2) \otimes \B^!(2))_{\I(2) \otimes \I(2)} \otimes \X^{DD}$ is the local $DD$ bimodule operation $\delta^1$ from Definition~\ref{def:XDD} (identifying $\X^{DD}$ and the summand $\X_{\x_{\ext}}^{DD}$ of $\X_i^{DD}$).
\end{definition}

\begin{proposition} The map $\delta^1_{\local, \x_{\ext}}$ is $\I$-linear.
\end{proposition}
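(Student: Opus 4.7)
The plan is to reduce $\I$-linearity of $\delta^1_{\local, \x_{\ext}}$ to the $(\I(2) \otimes \I(2))$-linearity of the local $DD$ operation $\delta^1_{\loc}$ on $\X^{DD}$, together with a compatibility between $\iota_{\x_{\ext}}$ and the idempotent data.

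First I would introduce a ring homomorphism $\pi_{\x_{\ext}}\co \I \to \I(2) \otimes \I(2)$ encoding how the global idempotents of $\X_{\x_{\ext}}^{DD}$ are built from local idempotents together with the external data: a primitive idempotent $\Ib_{\x_{1,\glob}} \otimes \Ib_{\x_{2,\glob}} \in \I$ is sent to $\Ib_{\x_{1,\local}} \otimes \Ib_{\x_{2,\local}} \in \I(2) \otimes \I(2)$ whenever $\x_{1,\glob}$ extends $\x_{1,\local}$ by $\x_{\ext}$ and $\x_{2,\glob}$ extends $\x_{2,\local}$ by $S_{\external} \setminus \x_{\ext}$, and to zero otherwise. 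By the construction of the $\I$-action on $M$ given above, the action on $\X_{\x_{\ext}}^{DD}$ factors as $\Ib \cdot m = \pi_{\x_{\ext}}(\Ib) \cdot m$, where the right-hand side uses the natural $(\I(2) \otimes \I(2))$-action on the local bimodule $\X^{DD}$.

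Next I would check that $\iota_{\x_{\ext}}$ is compatible with the corresponding idempotent actions on $\B_{\loc}$ and $\B$, in the sense that for every $\Ib \in \I$ and every $b \in \B_{\loc}$ one has
\[
\iota_{\x_{\ext}}\bigl(\pi_{\x_{\ext}}(\Ib) \cdot b\bigr) = \Ib \cdot \iota_{\x_{\ext}}(b).
\]
By $\F$-linearity this reduces to checking the identity on primitive idempotents of $\I(2) \otimes \I(2)$ and quiver basis elements of $\B_{\loc}$, where it follows directly from the definitions of $\Phi$ and $\Phi^!$ together with the specifications of $\x_{1,\glob}$ and $\x_{2,\glob}$ in terms of $\x_{1,\local}$, $\x_{2,\local}$, and $\x_{\ext}$ (observing that $\Phi$ preserves the portions of I-states in $\{i-1,i,i+1\}$ while the external portion $\x_{\ext}$ is untouched on both sides of any quiver arrow in the image).

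With both compatibilities in hand, $\I$-linearity is then immediate from the computation
\[
\delta^1_{\local, \x_{\ext}}(\Ib \cdot m) = (\iota_{\x_{\ext}} \otimes \id)\bigl(\delta^1_{\loc}(\pi_{\x_{\ext}}(\Ib) \cdot m)\bigr) = (\iota_{\x_{\ext}} \otimes \id)\bigl(\pi_{\x_{\ext}}(\Ib) \cdot \delta^1_{\loc}(m)\bigr) = \Ib \cdot \delta^1_{\local, \x_{\ext}}(m),
\]
applying in turn the factorization of the $\I$-action through $\pi_{\x_{\ext}}$, the $(\I(2) \otimes \I(2))$-linearity of $\delta^1_{\loc}$, and the compatibility of $\iota_{\x_{\ext}}$. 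The main obstacle, if it can be called one, is the combinatorial verification that $\iota_{\x_{\ext}}$ interacts correctly with the idempotent structure; this is a routine unwinding of the definitions of $\Phi$, $\Phi^!$, and the global I-states.
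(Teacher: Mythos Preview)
Your argument is correct and follows the same idea as the paper's proof, which simply states that the claim follows from the $\I(2) \otimes \I(2)$-linearity of $\delta^1_{\loc}$ together with the way the $\I$-action on $M$ was defined. You have spelled this out more explicitly by introducing the projection $\pi_{\x_{\ext}}$ and verifying the idempotent compatibility of $\iota_{\x_{\ext}}$, which is the routine unwinding the paper leaves implicit.
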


\begin{proof} This fact follows from the $\I(2) \otimes \I(2)$-linearity of $\delta^1_{\loc}$ and how the $\I$-action was defined on $M$, of which $\X_{\x_{\ext}}^{DD}$ is an $\I$-submodule.
\end{proof}

\begin{proposition} The map $\delta^1_{\local, \x_{\ext}}$ satisfies 
\[
(\partial_{\B} \otimes \id) \circ \delta^1_{\local, \x_{\ext}} + (\mu_{\B} \otimes \id) \circ (\id \otimes \delta^1_{\local, \x_{\ext}}) \circ \delta^1_{\local, \x_{\ext}} = 0.
\]
\end{proposition}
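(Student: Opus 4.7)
The plan is to reduce the claimed relation for $\delta^1_{\local, \x_{\ext}}$ directly to the $DD$ structure relation for the local bimodule $\X^{DD}$ over $\B_{\loc} = \B(2) \otimes \B^!(2)$, which has already been established (as a consequence of Proposition~\ref{prop:LocalDDFromLocalDA} and the corollary that follows it). The key observation is that the map $\iota_{\x_{\ext}} = \Phi \otimes \Phi^!$ is a homomorphism of differential graded algebras $\B_{\loc} \to \B$ which is non-unital but respects both the differential and the multiplication; so the pushforward of a $DD$ structure equation along $\iota_{\x_{\ext}} \otimes \id$ is again a valid equation.

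Concretely, I would first use the fact that $\iota_{\x_{\ext}}$ commutes with differentials to rewrite
\[
(\partial_{\B} \otimes \id) \circ \delta^1_{\local, \x_{\ext}} = (\partial_{\B} \otimes \id) \circ (\iota_{\x_{\ext}} \otimes \id) \circ \delta^1_{\loc} = (\iota_{\x_{\ext}} \otimes \id) \circ (\partial_{\B_{\loc}} \otimes \id) \circ \delta^1_{\loc}.
\]
Next I would expand the quadratic term, insert the definition of $\delta^1_{\local, \x_{\ext}}$ twice, and apply the identity $\mu_{\B} \circ (\iota_{\x_{\ext}} \otimes \iota_{\x_{\ext}}) = \iota_{\x_{\ext}} \circ \mu_{\B_{\loc}}$ to obtain
\[
(\mu_{\B} \otimes \id) \circ (\id \otimes \delta^1_{\local, \x_{\ext}}) \circ \delta^1_{\local, \x_{\ext}} = (\iota_{\x_{\ext}} \otimes \id) \circ (\mu_{\B_{\loc}} \otimes \id) \circ (\id \otimes \delta^1_{\loc}) \circ \delta^1_{\loc}.
\]
Adding the two expressions, the desired relation becomes $(\iota_{\x_{\ext}} \otimes \id)$ applied to the $DD$ structure relation for $\delta^1_{\loc}$ on $\X^{DD}$, which vanishes.

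The only thing to be careful about is that $\iota_{\x_{\ext}}$ is not unital, but this is irrelevant here since the displayed computation uses only that $\iota_{\x_{\ext}}$ respects multiplication of homogeneous algebra elements and the differential; no unit enters the calculation. Likewise, the tensor decomposition $\iota_{\x_{\ext}} = \Phi \otimes \Phi^!$ and the fact that $\delta^1_{\loc}$ takes values in $(\B(2) \otimes \B^!(2)) \otimes_{\I(2) \otimes \I(2)} \X^{DD}$ ensure that the idempotent structure is compatible on both sides. I do not expect a serious obstacle: the statement is essentially a functoriality property of the $DD$ relation under pushforward along a (non-unital) dg algebra homomorphism.
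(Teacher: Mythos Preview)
Your proposal is correct and is exactly the argument the paper has in mind: the paper's proof is the one-line assertion ``This equation follows from the same structure relation for $\delta^1_{\loc}$,'' and your write-up simply unpacks that line by using that $\iota_{\x_{\ext}}$ intertwines both $\partial$ and $\mu$ to factor $(\iota_{\x_{\ext}} \otimes \id)$ out of the $DD$ expression. Your remark that non-unitality of $\iota_{\x_{\ext}}$ is harmless here is also correct and worth noting.
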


\begin{proof} This equation follows from the same structure relation for $\delta^1_{\loc}$.
\end{proof}

\begin{definition}
Define $\delta^1_{\local}\co M \to \B \otimes_{\I} M$ by
\[
\delta^1_{\local} := \oplus_{\x_{\ext}} \delta^1_{\local, \x_{\ext}},
\]
where the sum is over external I-states.
\end{definition}

\begin{proposition}\label{prop:HalfLocalDD} The map $\delta^1_{\local}$ is $\I$-linear and satisfies the DD relation
\[
(\partial_{\B} \otimes \id) \circ \delta^1_{\local} + (\mu_{\B} \otimes \id) \circ (\id \otimes \delta^1_{\local}) \circ \delta^1_{\local} = 0.
\]
\end{proposition}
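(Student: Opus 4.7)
The plan is to reduce Proposition~\ref{prop:HalfLocalDD} directly to the two preceding propositions by exploiting the fact that $\delta^1_{\local}$ is block-diagonal with respect to the external-I-state decomposition of $M$. First I would note that, by construction, for each external I-state $\x_{\ext}$, the map $\delta^1_{\local, \x_{\ext}}$ sends $\X^{DD}_{\x_{\ext}}$ into $\B \otimes_{\I} \X^{DD}_{\x_{\ext}}$ (because $\iota_{\x_{\ext}}$ lands in $\B$ but the $M$-output is left in the $\x_{\ext}$ summand by the definition $(\iota_{\x_{\ext}} \otimes \id) \circ \delta^1_{\loc}$). Consequently, $\delta^1_{\local} = \oplus_{\x_{\ext}} \delta^1_{\local,\x_{\ext}}$ is a direct sum of maps that preserve the external-I-state grading on $M$.

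Next, $\I$-linearity of $\delta^1_{\local}$ is immediate: each summand $\delta^1_{\local,\x_{\ext}}$ is $\I$-linear by the proposition just proved, and the external-I-state decomposition of $M$ is a decomposition of $\I$-modules (different external I-states sit in different global idempotents), so the direct sum is $\I$-linear.

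For the $DD$ structure relation, both terms
\[
(\partial_{\B} \otimes \id) \circ \delta^1_{\local} \qquad \text{and} \qquad (\mu_{\B} \otimes \id) \circ (\id \otimes \delta^1_{\local}) \circ \delta^1_{\local}
\]
are likewise block-diagonal with respect to the external-I-state decomposition, since the only step that could mix blocks would be an intermediate application of $\delta^1_{\local}$, which by the previous paragraph preserves $\x_{\ext}$. Thus the verification reduces to showing the relation on each summand $\X^{DD}_{\x_{\ext}}$ separately, which is exactly the content of the second preceding proposition.

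The only point worth spelling out, in case the reader worries about it, is that in checking that proposition's relation one uses that $\iota_{\x_{\ext}} = \Phi \otimes \Phi^!$ commutes with $\partial_{\B}$ and $\mu_{\B}$ (it is a non-unital dg algebra map); unitality plays no role in the $DD$ relation, which is the only potential obstacle but in fact causes no difficulty. In summary, the proposition is a formal consequence of block-diagonality plus the two preceding propositions, and I do not anticipate any substantive obstacle.
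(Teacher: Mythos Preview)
Your proposal is correct and follows essentially the same approach as the paper: both reduce to the previously established summand-by-summand statements via the block-diagonal nature of $\delta^1_{\local}$. The paper's proof is simply the one-sentence version of what you wrote.
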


\begin{proof} 
Both statements are true for $\delta^1_{\local}$ restricted to each summand $\X_{\x_{\ext}}^{DD}$ of $\X_i^{DD}$, so they are true in general.
\end{proof}

\subsection{External component of the differential}

Now that we have defined $\delta^1_{\local}$, we need to define $\delta^1_{\external}$. We can define $\delta^1_{\external}$ as the sum of two maps:
\[
\delta^1_{\external} := \delta^1_{\ext, \unmoving} + \delta^1_{\ext, \moving}.
\]

We first define $\delta^1_{\ext, \unmoving}$.
\begin{definition} For a strand $j \neq i, i+1$, the map $\delta^1_{\ext, \unmoving, j}\co M \to \B \otimes_{\I} M$ sends each basis element $m \in M$ to $(U_j \otimes C_j) \otimes m$. The total unmoving component $\delta^1_{\ext, \unmoving}$ is defined to be
\[
\delta^1_{\ext, \unmoving} := \sum_{j \neq i, i+1} \delta^1_{\ext, \unmoving, j}.
\]

\end{definition}

To finish defining the global $DD$ bimodule $\X^{DD}_i$, we just need to define $\delta^1_{\ext, \moving}$. This map will also be a sum over external strands of maps $\delta^1_{\ext,\moving,j}$. First, we partition the gemerators of $M$ into three sets $A_j$, $B_j$, and $C_j$.

\begin{definition} Let $j \neq i, i+1$ be an index of an external strand. The strand $j$ is adjacent to two regions, namely regions $j-1$ and $j$. Let $m \in M$ be a basis element with left idempotent $(\Ib_{\x_1} \otimes \Ib_{\x_2}) \in \I$.
\begin{itemize}
\item  If $j-1 \in \x_1$, $j-1 \notin \x_2$, $j \notin \x_1$, and $j \in \x_2$, then $m \in A_j$.
\item If $j \in \x_1$, $j \notin \x_2$, $j-1 \notin \x_1$, and $j-1 \in \x_2$, then $m \in B_j$.
\item Otherwise, $m \in C_j$.
\end{itemize}
\end{definition}

Note that each basis element $m = (\x_{\ext}, m_{\loc})$ in $A_j$ has a counterpart in $B_j$, obtained by modifying $\x_{\ext}$ appropriately.

\begin{definition} Let $j \neq i, i+1$ be an external strand. Define $\sigma_j\co A_j \to B_j$ to be the bijection sending a basis element to its counterpart. Define $\tau_j\co B_j \to A_j$ to be the inverse of $\sigma_j$.
\end{definition}

\begin{definition} Let $j \neq i, i+1$ be an external strand. For a basis element $m$ of $M$, define $\delta^1_{\ext,\moving,j}(m)$ to be 
\begin{itemize}
\item $(R_j \otimes L_j) \otimes \sigma_j(m)$ if $m \in A_j$;
\item $(L_j \otimes R_j) \otimes \tau_j(m)$ if $m \in B_j$;
\item zero otherwise.
\end{itemize}
\end{definition}

\begin{proposition} Each map $\delta^1_{\ext,\moving,j}$ is $\I$-linear.
\end{proposition}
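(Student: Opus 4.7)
The plan is to check $\I$-linearity by unpacking the definition idempotent-by-idempotent. Recall that $\I$ is a direct product of copies of $\F$, so every element of $\I$ is an $\F$-linear combination of primitive idempotents $\Ib_1 \otimes \Ib_2$. Since each basis element $m \in M$ has a unique pair of idempotents (its first and second idempotents $\Ib_{\x_1}$ and $\Ib_{\x_2}$), it suffices to check that for every primitive idempotent $e = \Ib_1 \otimes \Ib_2 \in \I$, we have $\delta^1_{\ext,\moving,j}(e \cdot m) = e \cdot \delta^1_{\ext,\moving,j}(m)$. This in turn reduces to two elementary assertions: (1) the partition of basis elements into $A_j$, $B_j$, $C_j$ depends only on the idempotent of $m$, so $e \cdot m$ lies in the same subset as $m$; and (2) the algebra output of $\delta^1_{\ext,\moving,j}(m)$ has left idempotent equal to the left idempotent of $m$ itself.

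The first assertion is immediate from the definition of $A_j$, $B_j$, $C_j$, which is phrased directly in terms of the pair $(\x_1, \x_2)$. For the second assertion I would take the three cases in turn. If $m \in A_j$ with first and second idempotents $\Ib_{\x_1}$ and $\Ib_{\x_2}$, then by definition $j-1 \in \x_1$, $j \notin \x_1$, $j \in \x_2$, and $j-1 \notin \x_2$. From the quiver description of $\B(n)$ and $\B^!(n)$ in Section~\ref{sec:OSzAlgs}, the element $R_j \in \B(n)$ has left idempotent $\Ib_{\x_1}$ and right idempotent $\Ib_{\x_1 \setminus \{j-1\} \cup \{j\}}$, while $L_j \in \B^!(n)$ has left idempotent $\Ib_{\x_2}$ and right idempotent $\Ib_{\x_2 \setminus \{j\} \cup \{j-1\}}$. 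Comparing with the construction of $\sigma_j(m)$, these right idempotents are exactly the first and second idempotents of $\sigma_j(m)$, so the element $(R_j \otimes L_j) \otimes \sigma_j(m)$ has a well-defined meaning in $\B \otimes_{\I} M$, and its left idempotent matches that of $m$.

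The case $m \in B_j$ is entirely symmetric, with the roles of $R_j$ and $L_j$ swapped. For $m \in C_j$, the map sends $m$ to zero, and the claim is trivial. Combining these three cases yields both the equality when $e$ is the idempotent of $m$ (both sides equal $\delta^1_{\ext,\moving,j}(m)$) and the equality when $e$ is not the idempotent of $m$ (both sides equal zero), which proves $\I$-linearity. There is no serious obstacle here; the content of the proposition is simply that the definition of $\delta^1_{\ext,\moving,j}$ was made consistently with the idempotent grading, and the verification is a direct bookkeeping check against the quiver descriptions of $\B(n)$ and $\B^!(n)$.
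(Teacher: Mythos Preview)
Your proof is correct and follows the same idea as the paper's one-sentence argument: the paper simply observes that ``the effects of $(R_j \otimes L_j)$ and $(L_j \otimes R_j)$ on idempotents exactly offset the effects of the maps $\sigma_j$ and $\tau_j$,'' and you have unpacked exactly this statement by writing out the left and right idempotents of the relevant quiver arrows and matching them to the idempotents of $m$ and $\sigma_j(m)$ (respectively $\tau_j(m)$).
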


\begin{proof} The effects of $(R_j \otimes L_j)$ and $(L_j \otimes R_j)$ on idempotents exactly offset the effects of the maps $\sigma_j$ and $\tau_j$.
\end{proof}

\begin{definition} 
The $\I$-linear map $\delta^1_{\ext,\moving}$ is defined to be $\sum_{j \neq i, i+1} \delta^1_{\ext,\moving,j}$.
\end{definition}

\section{Checking the \texorpdfstring{$DD$}{DD} relations for the global singular bimodule}\label{sec:GlobalDDWellDefined}

We want to show that
\[
(\partial_{\B} \otimes \id) \circ \delta^1 + (\mu_{\B} \otimes \id) \circ (\id \otimes \delta^1) \circ \delta^1 = 0.
\]
Writing out $\delta^1$ as $\delta^1_{\local} + \delta^1_{\external}$, and using the fact from Proposition~\ref{prop:HalfLocalDD} that $\delta^1_{\local}$ satisfies the $DD$ relations, the desired equation has two parts. The first says that $\delta^1_{\external}$ also satisfies the $DD$ relations, and the second is a compatibility between $\delta^1_{\local}$ and $\delta^1_{\external}$. We will deal with these two assertions one at a time, and then combine them to obtain the global $DD$ relations.

\subsection{Proof that \texorpdfstring{$\delta^1_{\external}$}{delta\_external} satisfies the \texorpdfstring{$DD$}{DD} relations}

\begin{lemma}\label{lem:ZeroInc} Let $j \neq i, i+1$ be an external strand, and let $m$ be a basis element of $M$. If $m \in C_j$, then $(U_j \otimes U_j) \otimes m = 0$.
\end{lemma}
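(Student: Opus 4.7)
The plan is to reduce the lemma to a simple statement about idempotents and then carry out a short finite case analysis. In the tensor product $\B \otimes_{\I} M$, the identification of the right $\I$-action on $\B$ with the left $\I$-action on $M$ lets me rewrite
\[
(U_j \otimes U_j) \otimes m = \bigl(U_j \Ib_{\x_1} \otimes U_j \Ib_{\x_2}\bigr) \otimes m,
\]
where $\Ib_{\x_1} \otimes \Ib_{\x_2}$ is the pair of idempotents of $m$. The defining relations of $\B(n)$ and $\B^!(n)$ force $U_j \Ib_{\x}$ to vanish precisely when $\x \cap \{j-1, j\} = \varnothing$, so the lemma reduces to showing that at least one of $\x_1 \cap \{j-1, j\}$ and $\x_2 \cap \{j-1, j\}$ is empty whenever $m \in C_j$.

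The key structural input is the observation that, for every basis element of the local bimodule $\X^{DD}$, one has $\x_{1,\loc} \cap \x_{2,\loc} \subseteq \{1\}$; that is, the local boundary regions $0$ and $2$ are never simultaneously contained in both local idempotents. Combined with the complementarity of $\x_{1,\glob}$ and $\x_{2,\glob}$ on external regions built into the definition of $\x_{2,\glob}$ in Section~\ref{sec:GlobalDD}, this yields the global statement $\x_1 \cap \x_2 \subseteq \{i\}$. Since $j \neq i, i+1$, the set $\{j-1, j\}$ does not meet $\{i\}$, so $\x_1$ and $\x_2$ are disjoint on $\{j-1, j\}$.

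With this in hand, the argument splits into two cases. If both regions $j-1, j$ are external (so $j \leq i-2$ or $j \geq i+3$), external complementarity leaves four possibilities for $\x_1 \cap \{j-1, j\}$; the two singleton possibilities are precisely the conditions defining $A_j$ and $B_j$, so membership in $C_j$ forces $\x_1 \cap \{j-1, j\}$ to be empty or all of $\{j-1,j\}$, in either case giving an empty intersection on one side. If one of the regions is internal (so $j = i-1$, or symmetrically $j = i+2$), I would handle the external region by complementarity as before and the internal one via the observation on local boundary regions, enumerating the six resulting combinations and checking that the four corresponding to $C_j$ each yield an empty intersection. The main obstacle is really just the structural observation $\x_{1,\loc} \cap \x_{2,\loc} \subseteq \{1\}$, which I will verify by direct inspection of the $24$ basis elements in Definition~\ref{def:XDD}; topologically it reflects that the local boundary of the Heegaard diagram in Figure~\ref{fig:StabilizedSingularDiag} forces near-complementarity of markings on the top and bottom boundary edges.
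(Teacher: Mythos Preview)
Your proposal is correct and follows essentially the same route as the paper: both arguments reduce to the idempotent relation $U_j \Ib_{\x} = 0$ when $\x \cap \{j-1,j\} = \varnothing$, use external complementarity for strands $j \notin \{i-1, i+2\}$, and for the bordering strands $j \in \{i-1, i+2\}$ rule out by direct inspection the problematic case where both $\x_1$ and $\x_2$ contain the adjacent local boundary region. Your uniform observation $\x_{1,\loc} \cap \x_{2,\loc} \subseteq \{1\}$ is a cleaner packaging of the paper's ``by inspection, these basis elements do not exist'', and your six-case enumeration makes explicit the ``neither'' subcase that the paper leaves to the reader, but the underlying argument is the same.
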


\begin{proof} Let $m$ be a basis element of $M$ with idempotent $\Ib_{\x_1} \otimes \Ib_{\x_2}$. If $j$ is neither $i-1$ nor $i+2$, then the only way $m$ can be in $C_j$ is to have the two regions $\{j-1,j\}$ adjacent to strand $j$ either both in $\x_1$ or both in $\x_2$. In either case, $(U_j \otimes U_j) \otimes m = 0$.

If $j = i+2$, we would be worried about basis elements $m$ which are in $C_{i+2}$ but such that both $\x_1$ and $\x_2$ contain the region $j-1 = i+1$ immediately to the left of strand $j$ (the rightmost local region). However, by inspection, these basis elements do not exist. A similar analysis applies to $j = i-1$.
\end{proof}

\begin{lemma}\label{lem:ExternalPartialOfDelta} Let $m$ be a basis element of $M$. Then 
\[
((\partial_{\B} \otimes \id) \circ \delta^1_{\external}) (m) = \sum_{j \neq i, i+1} (U_j \otimes U_j) \otimes m.
\]
\end{lemma}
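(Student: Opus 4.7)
The plan is to unpack the definition of $\delta^1_{\external}$ as $\delta^1_{\ext,\unmoving} + \delta^1_{\ext,\moving}$ and compute $(\partial_{\B} \otimes \id)$ of each summand separately, using that the differential on $\B = \B(n) \otimes \B^!(n)$ is $\partial_{\B(n)} \otimes \id + \id \otimes \partial_{\B^!(n)}$, that $\B(n)$ has zero differential, and that the only nonzero differential on a generator of $\B^!(n)$ is $\partial(C_j) = U_j$.

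First I would handle the unmoving part. For each $j \neq i, i+1$ and each basis element $m$, $\delta^1_{\ext,\unmoving,j}(m) = (U_j \otimes C_j) \otimes m$. Applying $\partial_{\B} \otimes \id$ and using the Leibniz rule gives $\partial(U_j) \otimes C_j + U_j \otimes \partial(C_j) = 0 + U_j \otimes U_j$, tensored with $m$. Summing over $j$ yields exactly the right-hand side of the claimed equality.

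Next I would handle the moving part. For each $j \neq i, i+1$ and each basis element $m$, $\delta^1_{\ext,\moving,j}(m)$ is either zero or of the form $(R_j \otimes L_j) \otimes \sigma_j(m)$ or $(L_j \otimes R_j) \otimes \tau_j(m)$. In every case the algebra prefactor is a tensor product of elements of the form $R_j$ or $L_j$ in $\B(n)$ or $\B^!(n)$, each of which is a cycle under the respective differential. Hence $(\partial_{\B} \otimes \id) \circ \delta^1_{\ext,\moving,j} = 0$ for all such $j$, so the moving part contributes nothing.

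Adding the two contributions gives $((\partial_{\B} \otimes \id) \circ \delta^1_{\external})(m) = \sum_{j \neq i, i+1} (U_j \otimes U_j) \otimes m$, as desired. There is no real obstacle here; the computation is entirely formal once one observes that $C_j$ is the only generator of $\B^!(n)$ involved in a nonzero differential and that $U_j$ is a cycle in $\B(n)$. Note that no $\I$-linearity or idempotent bookkeeping is needed beyond what already appears in the definition, and that one does not need to appeal to Lemma~\ref{lem:ZeroInc} at this stage (the vanishing of $(U_j \otimes U_j) \otimes m$ for $m \in C_j$ is only used later, presumably when comparing with the contribution $(\mu_{\B} \otimes \id) \circ (\id \otimes \delta^1_{\external}) \circ \delta^1_{\external}$).
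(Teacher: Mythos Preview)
Your proof is correct and follows essentially the same approach as the paper: observe that the moving part contributes only algebra elements $R_j \otimes L_j$ and $L_j \otimes R_j$, which have zero differential, while the unmoving part contributes $(U_j \otimes C_j)$, whose differential is $(U_j \otimes U_j)$. Your observation that Lemma~\ref{lem:ZeroInc} is not needed here is also correct.
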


\begin{proof} The only algebra elements with nonzero differential are divisible by $C$ generators, and the only $C$ generators produced by $\delta^1_{\external}$ come from $\delta^1_{\ext, \unmoving}$. This map is a sum of multiplications by $(U_j \otimes C_j)$, for the external strands $j$, and the differential of $(U_j \otimes C_j)$ is $(U_j \otimes U_j)$.
\end{proof}

It will be helpful to introduce the notation 
\[
\delta^1_{\external,j} := \delta^1_{\ext,\unmoving,j} + \delta^1_{\ext,\moving,j}
\]
for an external strand $j$; we have $\delta^1_{\external} = \sum_{j \neq i,i+1} \delta^1_{\external,j}$.

\begin{lemma}\label{lem:SameExternalStrandDelta} If $j \neq i,i+1$ is an external strand, then 
\[
((\mu_{\B} \otimes \id) \circ (\id \otimes \delta^1_{\external,j}) \circ \delta^1_{\external,j}) (m) = (U_j \otimes U_j) \otimes m.
\]
\end{lemma}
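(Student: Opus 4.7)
The plan is to expand $\delta^1_{\external,j} = \delta^1_{\ext,\unmoving,j} + \delta^1_{\ext,\moving,j}$ and perform a case analysis on whether $m \in A_j$, $m \in B_j$, or $m \in C_j$. In each case I will compute the four possible compositions (unmoving-after-unmoving, unmoving-after-moving, moving-after-unmoving, moving-after-moving) and show that the sum collapses to the desired expression using the relations $R_j L_j = L_j R_j = U_j$, $C_j^2 = 0$, $[U_j, R_j] = [U_j, L_j] = 0$, and $[C_j, R_j] = [C_j, L_j] = 0$ in $\B(n) \otimes \B^!(n)$.

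First I would handle the case $m \in C_j$. Here $\delta^1_{\ext,\moving,j}(m) = 0$, so the only contribution comes from applying $\delta^1_{\ext,\unmoving,j}$ twice, yielding $(U_j^2 \otimes C_j^2) \otimes m$. Since $C_j^2 = 0$ in $\B^!(n)$, this vanishes; on the other hand, Lemma~\ref{lem:ZeroInc} says $(U_j \otimes U_j) \otimes m = 0$ as well, so the equality holds trivially. Note also that whether $m$ lies in $A_j$, $B_j$, or $C_j$ depends only on the idempotent of $m$, and multiplication by $(U_j \otimes C_j)$ does not change this idempotent; so $m$ remains in $C_j$ after one application of $\delta^1_{\ext,\unmoving,j}$, justifying the computation.

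Next I would treat the case $m \in A_j$. Applying $\delta^1_{\external,j}$ to $m$ produces two terms: $(U_j \otimes C_j) \otimes m$ (with $m \in A_j$ still) and $(R_j \otimes L_j) \otimes \sigma_j(m)$ (with $\sigma_j(m) \in B_j$). Applying $\delta^1_{\external,j}$ a second time and multiplying the algebra factors yields, after discarding the $(U_j^2 \otimes C_j^2)\otimes m$ term that dies by $C_j^2 = 0$:
\[
(U_j R_j \otimes C_j L_j) \otimes \sigma_j(m) + (R_j U_j \otimes L_j C_j) \otimes \sigma_j(m) + (R_j L_j \otimes L_j R_j) \otimes m.
\]
By commutativity of $U_j$ and $C_j$ with everything, the first two terms are equal and cancel in characteristic $2$. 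The relations $R_j L_j = U_j$ and $L_j R_j = U_j$ (and the identity $\tau_j \circ \sigma_j = \id_{A_j}$, which ensures the third term returns to $m$ itself) collapse the last term to $(U_j \otimes U_j) \otimes m$, as required.

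The case $m \in B_j$ is handled by the exact mirror argument, swapping the roles of $R_j,L_j$ and of $\sigma_j,\tau_j$. The main bookkeeping obstacle is simply tracking idempotents through the bijections $\sigma_j$ and $\tau_j$ and making sure the moving pieces go to the correct $A_j$/$B_j$ component so that the algebra relations apply cleanly; once this is verified, the argument is essentially a direct computation using only the relations in $\B(n) \otimes \B^!(n)$ and characteristic $2$ cancellation.
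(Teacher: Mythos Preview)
Your proposal is correct and follows essentially the same approach as the paper: a case split on $m \in A_j$, $B_j$, or $C_j$, with the $C_j$ case handled via $C_j^2=0$ and Lemma~\ref{lem:ZeroInc}, and the $A_j$/$B_j$ cases reduced to the moving-moving term after the unmoving-unmoving and mixed terms cancel. The paper's proof states these cancellations in words; your version simply writes them out explicitly.
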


\begin{proof} Consider the cases $m \in A_j, B_j, C_j$. If $m \in C_j$, then the left side of the equation we want is zero (the only contribution to $\delta^1_{\external,j}$ is the unmoving component, and applying the unmoving component twice picks up $C_j^2 = 0$). The right side of the equation is also zero by Lemma~\ref{lem:ZeroInc}.

If $m \in A_j$ or $B_j$, then applying the unmoving component of $\delta^1_{\external,j}$ twice still gives zero, as does the sum of the two ways of applying the moving component once and the unmoving component once. Applying the moving component twice gives $(U_j \otimes U_j) \otimes m$ as desired.
\end{proof}

\begin{lemma}\label{lem:TwoExternalStrandsDelta} We have
\[
\sum_{j,k \notin \{i,i+1\}; j \neq k} (\mu_{\B} \otimes \id) \circ (\id \otimes \delta^1_{\external,k}) \circ \delta^1_{\external,j} = 0.
\] 
\end{lemma}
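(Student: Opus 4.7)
The plan is to show that for each ordered pair $(j, k)$ of distinct external strands, the two compositions
\[
(\mu_{\B} \otimes \id) \circ (\id \otimes \delta^1_{\external, k}) \circ \delta^1_{\external, j} \quad \text{and} \quad (\mu_{\B} \otimes \id) \circ (\id \otimes \delta^1_{\external, j}) \circ \delta^1_{\external, k}
\]
agree; since each unordered pair $\{j, k\}$ contributes both orderings to the sum in the lemma, such pairwise commutativity forces the total sum to vanish over $\F_2$.

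To verify this pairwise equality, I would decompose $\delta^1_{\external, j} = \delta^1_{\ext, \unmoving, j} + \delta^1_{\ext, \moving, j}$ and match terms by type across the two orderings. The unmoving-unmoving terms contribute $(U_j U_k \otimes C_j C_k) \otimes m$, which is manifestly symmetric in $j, k$ since $U_j, U_k$ commute in $\B(n)$ and $C_j, C_k$ commute in $\B^!(n)$. For the two mixed types (one moving, one unmoving), the central algebraic input is that $[U_a, -] = 0$ in $\B(n)$ and $[C_a, -] = 0$ in $\B^!(n)$, so the unmoving factor $U_k \otimes C_k$ freely commutes past the moving factor $R_j \otimes L_j$ or $L_j \otimes R_j$, rendering the two orderings of a move at $j$ and a non-move at $k$ identical.

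For the moving-moving type I would split on $|j - k|$. When $|j - k| \geq 2$, the windows $\{j-1, j\}$ and $\{k-1, k\}$ on which $\sigma_j, \tau_j$ and $\sigma_k, \tau_k$ act are disjoint, so these involutions commute, the memberships $m \in A_j, B_j$ and $m \in A_k, B_k$ are independent, and the algebra generators $R_j, L_j$ commute with $R_k, L_k$ by the relations $[R_a, R_b] = [L_a, L_b] = [R_a, L_b] = 0$ for $|a - b| > 1$. A brief case analysis on $A/B$ memberships then gives commutativity.

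The main obstacle is the adjacent case $|j - k| = 1$, which I expect to handle by showing that both orderings vanish identically. Taking $k = j + 1$, I would enumerate the idempotent configurations for $m$ that can yield a nonzero term. A direct analysis shows that after $\delta^1_{\ext, \moving, j+1} \circ \delta^1_{\ext, \moving, j}$ the only non-forbidden configurations are $m \in A_j$ with $\sigma_j(m) \in A_{j+1}$ and $m \in B_j$ with $\tau_j(m) \in B_{j+1}$, producing algebra outputs $(R_j R_{j+1}) \otimes (L_j L_{j+1})$ and $(L_j L_{j+1}) \otimes (R_j R_{j+1})$ respectively, both killed by $R_j R_{j+1} = 0$ (applied in $\B(n)$ or in $\B^!(n)$). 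The symmetric configurations for $\delta^1_{\ext, \moving, j} \circ \delta^1_{\ext, \moving, j+1}$ instead produce outputs killed by $L_{j+1} L_j = 0$. All other combinations are forbidden by the sharing of region $j$ between the two move windows, whose idempotent data become incompatible after one application of $\sigma$ or $\tau$. This completes the pairwise commutativity and hence the lemma.
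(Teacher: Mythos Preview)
Your proposal is correct and follows essentially the same approach as the paper: decompose $\delta^1_{\external,j}$ into moving and unmoving parts, cancel the unmoving--unmoving and mixed terms against their $(k,j)$ counterparts using centrality of $U$ and $C$, cancel the moving--moving terms for $|j-k|>1$ using commutation of the disjoint window operations, and kill the adjacent case $|j-k|=1$ via the relations $R_jR_{j+1}=0$ and $L_{j+1}L_j=0$. Your treatment of the adjacent case is a bit more explicit than the paper's, which simply observes that a surviving product on one tensor factor forces a vanishing product on the other, but the substance is the same.
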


\begin{proof} Let $m$ be a basis element of $M$. The term $(m_{\B} \otimes \id) \circ (\id \otimes \delta^1_{\ext,\unmoving,k}) \circ \delta^1_{\ext,\unmoving,j}$ cancels with the corresponding term with $j$ and $k$ reversed. Similarly, terms $(m_{\B} \otimes \id) \circ (\id \otimes \delta^1_{\ext,\moving,k}) \circ \delta^1_{\ext,\unmoving,j}$ cancel with terms $(m_{\B} \otimes \id) \circ (\id \otimes \delta^1_{\ext,\unmoving,k}) \circ \delta^1_{\ext,\moving,j}$ when $j$ and $k$ are reversed.

The remaining terms are $(m_{\B} \otimes \id) \circ (\id \otimes \delta^1_{\ext,\moving,k}) \circ \delta^1_{\ext,\moving,j}$. First, suppose $|j-k| > 1$. If we have $m \in A_k$, $B_k$, or $C_k$ as well as $m \in A_j$ (resp. $B_j)$, then $\sigma_j(m)$ (resp. $\tau_j(m)$) is also in $A_k$, $B_k$, or $C_k$ respectively. The same is true with $j$ and $k$ reversed. Thus, the contribution from $(j,k)$ is canceled by the one from $(k,j)$.

On the other hand, if $|j-k| = 1$ and $(m_{\B} \otimes \id) \circ (\id \otimes \delta^1_{\ext,\moving,k}) \circ \delta^1_{\ext,\moving,j}(m)$ is nonzero, then the output algebra element must have either $R_2 R_1$ or $L_1 L_2$ on either the right or left side of the $\otimes$ symbol. But then it has $L_2 L_1$ or $R_1 R_2$ on the other side, and these algebra elements are zero.
\end{proof}

\begin{proposition}\label{prop:DDRelsForExternal}
We have
\[
(\partial_{\B} \otimes \id) \circ \delta^1_{\external} + (\mu_{\B} \otimes \id) \circ (\id \otimes \delta^1_{\external}) \circ \delta^1_{\external} = 0.
\]
\end{proposition}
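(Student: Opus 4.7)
The plan is to assemble the three preceding lemmas into a single cancellation identity. Since $\delta^1_{\external} = \sum_{j \neq i, i+1} \delta^1_{\external,j}$, the quadratic term expands bilinearly as
\[
(\mu_{\B} \otimes \id) \circ (\id \otimes \delta^1_{\external}) \circ \delta^1_{\external} = \sum_{j,k \neq i,i+1} (\mu_{\B} \otimes \id) \circ (\id \otimes \delta^1_{\external,k}) \circ \delta^1_{\external,j}.
\]
I would split this double sum into the diagonal contribution $j = k$ and the off-diagonal contribution $j \neq k$.

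For the diagonal contribution, Lemma~\ref{lem:SameExternalStrandDelta} identifies each summand with the map $m \mapsto (U_j \otimes U_j) \otimes m$, so the diagonal sum equals $\sum_{j \neq i, i+1} (U_j \otimes U_j) \otimes m$. The off-diagonal contribution vanishes by Lemma~\ref{lem:TwoExternalStrandsDelta}. Finally, by Lemma~\ref{lem:ExternalPartialOfDelta}, the differential term $(\partial_{\B} \otimes \id) \circ \delta^1_{\external}$ produces exactly $\sum_{j \neq i, i+1} (U_j \otimes U_j) \otimes m$. Adding the two contributions and working over $\F = \F_2$ gives zero, as required.

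There is essentially no obstacle here: the whole content was absorbed into the three lemmas, where the genuine case analysis on whether $m \in A_j, B_j, C_j$ (and on whether $|j-k| = 1$ or $|j-k| > 1$) was carried out. The only subtlety worth remarking on in the write-up is that, because $\delta^1_{\external,j}$ can change the idempotents of $m$ (via the bijections $\sigma_j, \tau_j$), one should verify that the membership class $A_k, B_k, C_k$ of $\sigma_j(m)$ or $\tau_j(m)$ behaves symmetrically in $j$ and $k$ when $|j-k| > 1$, which is precisely what drives the cancellation in Lemma~\ref{lem:TwoExternalStrandsDelta}; but since that lemma is already proved, the present proposition reduces to a one-line bookkeeping argument.
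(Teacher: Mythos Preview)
Your proof is correct and follows essentially the same approach as the paper: both expand $\delta^1_{\external}$ as $\sum_j \delta^1_{\external,j}$, use Lemma~\ref{lem:TwoExternalStrandsDelta} to kill the off-diagonal terms, Lemma~\ref{lem:SameExternalStrandDelta} to evaluate the diagonal terms as $\sum_j (U_j \otimes U_j) \otimes m$, and Lemma~\ref{lem:ExternalPartialOfDelta} to match this against the $\partial_{\B}$ term.
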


\begin{proof} 
Let $m$ be a basis element of $M$. By Lemma~\ref{lem:TwoExternalStrandsDelta}, we have
\[
(\mu_{\B} \otimes \id) \circ (\id \otimes \delta^1_{\external}) \circ \delta^1_{\external} (m) = \sum_{j \neq i,i+1} (\mu_{\B} \otimes \id) \circ (\id \otimes \delta^1_{\external,j}) \circ \delta^1_{\external,j} (m).
\]
By Lemma~\ref{lem:SameExternalStrandDelta}, this sum is equal to $\sum_{j \neq i,i+1} (U_j \otimes U_j) \otimes m$, which in turn equals $((\partial_{\B} \otimes \id) \circ \delta^1_{\external}) (m)$ by Lemma~\ref{lem:ExternalPartialOfDelta}.
\end{proof}

\subsection{Proof that \texorpdfstring{$\delta^1_{\local}$}{delta\_local} and \texorpdfstring{$\delta^1_{\external}$}{delta\_external} are compatible}

\begin{lemma}\label{lem:InternalExternalCompatibleUnmoving}
We have
\[
(\mu_{\B} \otimes \id) ((\id \otimes \delta^1_{\local}) \circ \delta^1_{\ext,\unmoving} + (\id \otimes \delta^1_{\ext, \unmoving}) \circ \delta^1_{\local}) = 0.
\]
\end{lemma}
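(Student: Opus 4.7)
The plan is to verify this identity by expanding both compositions and invoking the centrality of the generators $U_j$ in $\B(n)$ and $C_j$ in $\B^!(n)$ for external strands $j$. First, I will unpack definitions: applied to a basis element $m = (\x_{\ext}, m_{\loc})$, the map $\delta^1_{\ext,\unmoving}$ produces $\sum_{j \neq i, i+1} (U_j \otimes C_j) \otimes m$ without altering $m$, while $\delta^1_{\local}(m) = \sum_k (a_k \otimes a'_k) \otimes m'_k$ lies entirely in the image of $\iota_{\x_{\ext}} = \Phi \otimes \Phi^!$ tensored with $M$. In particular, each $a_k$ is a word in the generators $R_i, L_i, U_i, R_{i+1}, L_{i+1}, U_{i+1}$ of $\B(n)$, and each $a'_k$ is a word in the corresponding generators of $\B^!(n)$ together with $C_i$ and $C_{i+1}$.

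Next, I will evaluate each composition on $m$ and apply $\mu_{\B} \otimes \id$. The composition $(\id \otimes \delta^1_{\local}) \circ \delta^1_{\ext,\unmoving}$, followed by multiplication, yields
\[
\sum_{j \neq i, i+1} \sum_k (U_j a_k \otimes C_j a'_k) \otimes m'_k,
\]
whereas $(\id \otimes \delta^1_{\ext,\unmoving}) \circ \delta^1_{\local}$, followed by multiplication, yields
\[
\sum_k \sum_{j \neq i, i+1} (a_k U_j \otimes a'_k C_j) \otimes m'_k.
\]

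The key observation is that the relations $[U_j, A] = 0$ for all $A \in \B(n)$, and $[C_j, A] = 0$ for all $A \in \B^!(n)$, from Definition~\ref{OSzAlgDef} and the definition of $\B^!(n,k)$ that follows it, assert that $U_j$ is central in $\B(n)$ and $C_j$ is central in $\B^!(n)$. Hence $U_j a_k = a_k U_j$ and $C_j a'_k = a'_k C_j$ hold as algebra identities; the two displayed sums coincide termwise, and their sum (equal to their difference over $\F_2 = \F$) vanishes.

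The main obstacle I anticipate is ensuring that this centrality argument survives the idempotent structure. Concretely, in the boundary cases $j = i-1$ or $j = i+2$, one of the regions adjacent to strand $j$ is a local region, so whether $U_j$ vanishes at a given global idempotent depends on the local part of $m$, which is changed by $\delta^1_{\local}$. However, since $U_j$ and $C_j$ commute with every idempotent $\Ib_{\x}$ in their respective algebras, the termwise equality $U_j a_k = a_k U_j$ automatically accounts for any idempotent-induced vanishing on either side, so no separate case analysis on the global idempotents of $m$ and $m'_k$ is required.
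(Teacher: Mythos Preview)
Your proof is correct and follows essentially the same approach as the paper: both arguments rest on the observation that $U_j$ and $C_j$ are central in $\B(n)$ and $\B^!(n)$ respectively, so that $\delta^1_{\ext,\unmoving}$ is multiplication by a central element of $\B$ and hence commutes with $\delta^1_{\local}$. Your treatment is more explicit than the paper's one-line proof, and your final paragraph addressing the idempotent structure in the boundary cases $j = i-1, i+2$ is a nice clarification that the paper leaves implicit.
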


\begin{proof} This equation holds since $\delta^1_{\ext,\unmoving}$ can be viewed as a sum of maps which just multiply each basis element of $M$ by a fixed element $\sum_{\Ib} \Ib (U_j \otimes C_j) \Ib$ in the center of the algebra $\B$.
\end{proof}

\begin{lemma}\label{lem:InternalExternalCompatibleFarStrands}
For external strands $j$ which are neither $i-1$ nor $i+2$, we have
\[
(\mu_{\B} \otimes \id) ((\id \otimes \delta^1_{\local}) \circ \delta^1_{\ext,\moving,j} + (\id \otimes \delta^1_{\ext,\moving,j}) \circ \delta^1_{\local}) = 0.
\]
\end{lemma}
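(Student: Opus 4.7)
The plan is to verify the equation on each basis element $m = (\x_{\ext}, m_{\loc})$ of $M$ directly, splitting into cases based on whether $m$ lies in $A_j$, $B_j$, or $C_j$. The key fact driving the argument is that, under the hypothesis $j \notin \{i-1, i, i+1, i+2\}$, we have $|j-i|\geq 2$ and $|j-(i+1)|\geq 2$, so the generators $R_j, L_j, U_j$ of $\B(n)$ (and their counterparts in $\B^!(n)$) commute with every element in the image of $\Phi$ and $\Phi^!$. Moreover, under the same hypothesis, the classes $A_j, B_j, C_j$ depend only on $\x_{\ext}$ and not on the local part $m_{\loc}$, so they are preserved by $\delta^1_{\local}$.

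First suppose $m \in A_j$. Writing the local differential as $\delta^1_{\loc}(m_{\loc}) = \sum_k (b_k \otimes b'_k) \otimes m_{\loc,k}$ in $\X^{DD}$, I would expand
\[
(\mu_{\B}\otimes\id)\circ(\id \otimes \delta^1_{\local})\circ\delta^1_{\ext,\moving,j}(m) = \sum_k \bigl(R_j\,\Phi(b_k) \otimes L_j\,\Phi^!(b'_k)\bigr) \otimes (\sigma_j(\x_{\ext}), m_{\loc,k}),
\]
and, using that each $(\x_{\ext}, m_{\loc,k})$ still lies in $A_j$,
\[
(\mu_{\B}\otimes\id)\circ(\id \otimes \delta^1_{\ext,\moving,j})\circ\delta^1_{\local}(m) = \sum_k \bigl(\Phi(b_k)\,R_j \otimes \Phi^!(b'_k)\,L_j\bigr) \otimes (\sigma_j(\x_{\ext}), m_{\loc,k}).
\]
The $A_j$ hypothesis guarantees that the requisite idempotent-matching for each product is satisfied (the left and right idempotents of $\Phi(b_k)$ and $\Phi^!(b'_k)$ agree with $\x_{\ext}$ outside the local region, so $R_j$ and $L_j$ can indeed be multiplied against them on either side).

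The next step is to invoke the algebra relations of Definition~\ref{OSzAlgDef}: since $\Phi(b_k)$ is a product of generators $R_i, L_i, U_i, R_{i+1}, L_{i+1}, U_{i+1}$ and $|j-i|, |j-(i+1)| \geq 2$, the relations $[R_j, R_{\ell}] = [R_j, L_{\ell}] = 0$ for $|j-\ell|>1$ and $[U_\ell, A]=0$ give $R_j\,\Phi(b_k) = \Phi(b_k)\,R_j$ in $\B(n)$. The analogous argument in $\B^!(n)$ yields $L_j\,\Phi^!(b'_k) = \Phi^!(b'_k)\,L_j$. Hence the two sums are identical, and cancel over $\F_2$. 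The case $m \in B_j$ is symmetric, with $\tau_j$ and $L_j \otimes R_j$ replacing $\sigma_j$ and $R_j \otimes L_j$.

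Finally, for $m \in C_j$ the first composition is zero because $\delta^1_{\ext,\moving,j}(m)=0$ by definition. For the second, since membership in $C_j$ depends only on $\x_{\ext}$ (this is where the restriction $j \neq i-1, i+2$ is used), every term $(\x_{\ext}, m_{\loc,k})$ appearing in $\delta^1_{\local}(m)$ also lies in $C_j$, so $\delta^1_{\ext,\moving,j}$ kills each of them and the second composition vanishes as well. I expect no real obstacle here: the only place one must be careful is to confirm both that $A_j, B_j, C_j$ are preserved by $\delta^1_{\local}$ and that the idempotent structures line up so the commutation relations literally translate into equalities of the two sums; both points are immediate consequences of the hypothesis $j \notin \{i-1,i,i+1,i+2\}$.
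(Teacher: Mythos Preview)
Your proposal is correct and follows essentially the same approach as the paper's proof: both argue that membership in $A_j$, $B_j$, $C_j$ is preserved by $\delta^1_{\local}$ (since for $j\notin\{i-1,i,i+1,i+2\}$ this depends only on $\x_{\ext}$), and that $R_j,L_j$ commute with all local algebra outputs because $|j-i|,|j-(i+1)|\geq 2$. Your version is simply more explicit, writing out the two sums and invoking the specific relations; the paper compresses this into two sentences.
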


\begin{proof} 
Let $m \in M$ be a basis element. If $m \in A_j$, $B_j$, or $C_j$, then the same is true for the $M$-basis element $m'$ of any term $a \otimes m'$ of $\delta^1_{\local}(m)$. Thus, $\delta^1_{\ext,\moving,j}$ acts the same before and after applying $\delta^1_{\local}$. Since the algebra generators $R_j$ and $L_j$ are far enough away from all local algebra generators to commute with them, the equation holds.
\end{proof}

\begin{lemma}\label{lem:InternalExternalCompatibleRHS}
We have
\begin{equation}\label{eq:RHSCompatibility}
(\mu_{\B} \otimes \id) ((\id \otimes \delta^1_{\local}) \circ \delta^1_{\ext,\moving,i+2} + (\id \otimes \delta^1_{\ext,\moving,i+2}) \circ \delta^1_{\local}) = 0.
\end{equation}
\end{lemma}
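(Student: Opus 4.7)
The plan is to verify \eqref{eq:RHSCompatibility} by expanding both composites term by term using the explicit formulas for $\delta^1_{\loc}$ in Definition~\ref{def:XDD} and showing, for each choice of source basis element and local transition, that the two orderings either produce the same element of $\B\otimes_{\I}M$ (cancelling over $\F$) or each independently produce zero.

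The argument rests on two observations. Since $\delta^1_{\local}$ leaves $\x_{\ext}$ untouched, the membership of a basis element of $M$ in $A_{i+2}$, $B_{i+2}$, or $C_{i+2}$ depends only on whether $i+2\in\x_{\ext}$ together with the data of local region $2$ in the first and second local idempotents; explicitly, $m\in A_{i+2}$ iff $2\in\x_{1,\loc}$, $2\notin\x_{2,\loc}$, and $i+2\notin\x_{\ext}$, with the analogous characterization for $B_{i+2}$, and whenever both or neither of $\x_{1,\loc}$, $\x_{2,\loc}$ contain region $2$ the basis element lies in $C_{i+2}$ regardless of $\x_{\ext}$. On the algebra side, $R_{i+2}$ and $L_{i+2}$ commute with every local generator except $R_{i+1}$ and $L_{i+1}$ (by the commutation relations for $|a-b|>1$ together with the centrality of $U_j$ and $C_j$), while $R_{i+1}R_{i+2}=0$ and $L_{i+2}L_{i+1}=0$ by the defining relations, and the mixed products $R_{i+2}L_{i+1}$, $L_{i+1}R_{i+2}$, $L_{i+2}R_{i+1}$, and $R_{i+1}L_{i+2}$ all vanish by idempotent incompatibility at the intermediate vertex.

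I would then split the nonzero terms of $\delta^1_{\loc}$ into \emph{region-$2$-preserving} terms, whose algebra output uses no $R_{i+1}$ or $L_{i+1}$ in either tensor factor, and \emph{region-$2$-moving} terms. For a preserving term, source and target share the same local-region-$2$ idempotent data and hence lie in the same $A/B/C$ class, and the external generators $R_{i+2}$, $L_{i+2}$ slide past the local output, so the two composites produce the same algebra element acting on the same basis element and cancel over $\F$. For a moving term, a short case check using the characterization above shows that whenever the source lies in $A_{i+2}$ or $B_{i+2}$ the target necessarily lies in $C_{i+2}$ (this holds in each of the three subcases: moving only on $\B(n)$, only on $\B^!(n)$, or on both sides simultaneously), so the local-first composite vanishes via $\delta^1_{\ext,\moving,i+2}$ at the target; the external-first composite in turn vanishes because the external generator, after commuting past the intervening $R_i$, $L_i$, $U_j$, $C_j$ factors, meets the $R_{i+1}$ or $L_{i+1}$ factor of the local output in one of the zero products listed above. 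The complementary case, in which the source lies in $C_{i+2}$, is handled symmetrically: the external-first composite is automatically zero, and the local-first composite is zero either because the target again lies in $C_{i+2}$ or because the algebra product vanishes by the same relations.

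The main obstacle is tracking, for each moving term appearing in Definition~\ref{def:XDD}, the precise idempotent and algebra data so that the correct vanishing mechanism applies; the $\B(n)$ and $\B^!(n)$ tensor factors may carry $R_{i+1}$ or $L_{i+1}$ independently or simultaneously, as in the local term $L_2\otimes R_2R_1$ producing the transition $(2)\to(3)$, where the $\B(n)$ side is killed by $R_{i+2}L_{i+1}=0$ (idempotent) while the $\B^!(n)$ side is killed by $R_{i+1}R_{i+2}=0$ after commuting $R_i$ past $R_{i+2}$. Since the local bimodule is finite and the algebra relations together with idempotent constraints leave no ambiguity, the verification reduces to a bounded finite enumeration. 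The analogous compatibility at $j=i-1$, needed to complete the proof of the global $DD$ relation, is proved symmetrically with $R_{i-1}$ and $L_{i-1}$ replacing $R_{i+2}$ and $L_{i+2}$.
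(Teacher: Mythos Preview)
Your argument has a genuine gap in the ``preserving'' case. The crux is that for $j=i+2$ the bijection $\sigma_{i+2}\colon A_{i+2}\to B_{i+2}$ does \emph{not} merely toggle $i+2$ in $\x_{\ext}$: because region $i+1$ is the local region~$2$, passing from $A_{i+2}$ to $B_{i+2}$ forces local region~$2$ out of the first local idempotent and into the second. Thus $\sigma_{i+2}$ changes $m_{\loc}$ itself (indeed it moves it to a different $k$--summand of $\X^{DD}$). Consequently the terms of $\delta^1_{\loc}(m_{\loc})$ and of $\delta^1_{\loc}\bigl((\sigma_{i+2}m)_{\loc}\bigr)$ are in no natural bijection, and the sentence ``$R_{i+2},L_{i+2}$ slide past the local output, so the two composites produce the same algebra element acting on the same basis element'' is not justified.

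Concretely, take $m={_{AC}}(S^-_t)^{BD}$ (the paper's $(3)$), so $m_{\loc}=(1)_{k=2}$ while $(\sigma_{i+2}m)_{\loc}=(1)_{k=1}$. The term $(R_1U_2\otimes C_1C_2)\otimes(2)_{k=2}$ of $\delta^1_{\loc}((1)_{k=2})$ is ``preserving'' in your sense (no $R_2$ or $L_2$), and its target lies in $A_{i+2}$, so local--first followed by external produces
\[
(R_iU_{i+1}R_{i+2}\,\otimes\, C_iC_{i+1}L_{i+2})\otimes(1').
\]
But $\delta^1_{\loc}((1)_{k=1})$ has no term landing in $(3)_{k=1}=(1')_{\loc}$, so external--first produces nothing with target $(1')$. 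The lemma survives only because
\[
R_iU_{i+1}R_{i+2}=U_{i+1}R_iR_{i+2}=L_{i+1}R_{i+1}R_iR_{i+2}=L_{i+1}R_{i+1}R_{i+2}R_i=0
\]
at the starting idempotent $\{i-1,i+1\}$, using $R_{i+1}R_{i+2}=0$ after factoring $U_{i+1}=L_{i+1}R_{i+1}$; this vanishing is never invoked in your argument. The same phenomenon recurs for every preserving output carrying a $U_{i+1}$ or $C_{i+1}$. Your dichotomy therefore misses an entire class of terms for which neither ``same term on both sides'' nor the idempotent/$R_jR_{j+1}$ mechanism you list applies; what actually happens is a case--by--case cancellation that requires either the explicit enumeration the paper carries out or a more refined structural statement about how $\sigma_{i+2}$ interacts with the $U_2,C_2$--carrying part of $\delta^1_{\loc}$.
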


\begin{proof} 
We extend the notation at the beginning of Section~\ref{sec:LocalDDDefs} from $\{i-1,i,i+1\}$ to $\{i-1,i,i+1,i+2\}$ using the letter $D$, so that subsets of $\{i-1,i,i+1,i+2\}$ are written as subwords of $ABCD$.

In this notation, there are ten types of basis elements of $M$ which lie in $A_{i+2}$, namely 
\begin{align*}
&{_{BC}}W^{BD}_{t}, {_{BC}}W^{BD}_{b}, {_{AC}}(S^-_t)^{BD}, {_{AC}}(S^+_t)^{BD}, {_{C}}(S^-_t)^{ABD}, \\
&{_{C}}(S^+_t)^{ABD}, {_{C}}(S^-_b)^{ABD}, {_{C}}(S^+_b)^{ABD} {_{AC}}(S^-_b)^{BD}, {_{AC}}(S^+_b)^{BD}.
\end{align*}
Label these basis elements $(1)$--$(10)$ in order.

Similarly, ten types of basis elements of $M$ lie in $B_{i+2}$. These are 
\begin{align*}
&{_{BD}}W_t^{BC}, {_{BD}}W_b^{BC}, {_{AD}}(S^-_t)^{BC}, {_{AD}}(S^+_t)^{BC}, {_{D}}(S^-_t)^{ABC}, \\
&{_{D}}(S^+_t)^{ABC}, {_{D}}(S^-_b)^{ABC}, {_{D}}(S^+_b)^{ABC}, {_{AD}}(S^-_b)^{BC}, {_{AD}}(S^+_b)^{BC}.
\end{align*}
Label these basis elements $(1')$--$(10')$ in order.

The twenty-eight remaining types of basis elements of $M$ lie in $C_{i+2}$. The map $\delta^1_{\ext,\moving,i+2}$ is zero on basis elements in $C_{i+2}$, so the same is true for $(\mu_{\B} \otimes \id) ((\id \otimes \delta^1_{\local}) \circ \delta^1_{\ext,\moving,i+2}$. It is straightforward, although a bit tedious, to check that $(\id \otimes \delta^1_{\ext,\moving,i+2}) \circ \delta^1_{\local})$ vanishes on basis elements in $C_{i+2}$; one need only consider terms of $\delta^1_{\local}$ which map basis elements in $C_{i+2}$ to basis elements in $A_{i+2}$ or $B_{i+2}$.

We verify that equation~\eqref{eq:RHSCompatibility} holds for basis elements in $A_{i+2}$ and $B_{i+2}$. For each index $j$ between $1$ and $10$, we have
\[
\delta^1_{\ext,\moving,i+2} (j) = (R_{i+1} \otimes L_{i+1}) \otimes (j')
\]
and
\[
\delta^1_{\ext,\moving,i+2} (j') = (L_{i+1} \otimes R_{i+1}) \otimes (j).
\]

First we will list nonzero terms of $(\mu_{\B} \otimes \id) \circ (\id \otimes \delta^1_{\local}) \circ \delta^1_{\ext,\moving,i+2}(j)$, for $j = 1, \ldots, 10$.

\begin{itemize}
\item For $j = 1$, $(R_{i+2} L_i \otimes L_{i+2}) \otimes (3')$ and $(R_{i+2} L_i \otimes L_{i+2}U_i C_{i+1}) \otimes (4')$.
\item For $j = 2$, $(R_{i+2} U_{i+1} \otimes L_{i+2}) \otimes (1')$ and $(R_{i+2}L_i \otimes L_{i+2}) \otimes (9')$.
\item For $j = 3$, $(R_{i+2} R_i \otimes L_{i+2} U_i C_{i+1}) \otimes (2')$ and $(R_{i+2}U_i \otimes L_{i+2} U_i C_{i+1}) \otimes (10')$.
\item For $j = 4$, $(R_{i+2} R_i \otimes L_{i+2}) \otimes (2')$ and $(R_{i+2}U_i \otimes L_{i+2}) \otimes (10')$.
\item For $j = 5$, all terms are zero.
\item For $j = 6$, all terms are zero.
\item For $j = 7$, all terms are zero.
\item For $j = 8$, $(R_{i+2} \otimes L_{i+2}) \otimes (7')$.
\item For $j = 9$, all terms are zero.
\item For $j = 10$, $(R_{i+2} \otimes L_{i+2}) \otimes (9')$.
\end{itemize}

One can check that the nonzero terms of $(\mu_{\B} \otimes \id) \circ (\id \otimes \delta^1_{\ext,\moving,i+2}) \circ \delta^1_{\local}(j)$, for $j = 1, \ldots, 10$, agree with the above list, showing that equation~\eqref{eq:RHSCompatibility} holds on basis elements in $A_{i+2}$.

Below we list nonzero terms of $(\mu_{\B} \otimes \id) \circ (\id \otimes \delta^1_{\local}) \circ \delta^1_{\ext,\moving,i+2}(j')$, for $j = 1, \ldots, 10$:

\begin{itemize}
\item For $j = 1$, $(L_{i+2} L_i \otimes R_{i+2}) \otimes (3)$ and $(L_{i+2} L_i \otimes R_{i+2} U_i C_{i+1}) \otimes (4)$.
\item For $j = 2$, $(L_{i+2} U_{i+1} \otimes R_{i+2}) \otimes (1)$ and $(L_{i+2}L_i \otimes R_{i+2}) \otimes (9)$.
\item For $j = 3$, $(L_{i+2} R_i \otimes R_{i+2} U_i C_{i+1}) \otimes (2)$ and $(L_{i+2}U_i \otimes R_{i+2} U_i C_{i+2}) \otimes (9)$.
\item For $j = 4$, $(L_{i+2} R_i \otimes R_{i+2}) \otimes (2)$ and $(L_{i+2} U_i \otimes R_{i+2}) \otimes (10)$.
\item For $j = 5$, all terms are zero.
\item For $j = 6$, all terms are zero.
\item For $j = 7$, all terms are zero.
\item For $j = 8$, $(L_{i+2} \otimes R_{i+2}) \otimes (7)$.
\item For $j = 9$, all terms are zero.
\item For $j = 10$, $(L_{i+2} \otimes R_{i+2}) \otimes (9)$. 
\end{itemize}

One can check that these terms are also the nonzero terms of $(\mu_{\B} \otimes \id) \circ (\id \otimes \delta^1_{\ext,\moving,i+2}) \circ \delta^1_{\local}(j')$, for $j = 1, \ldots, 10$. Thus, equation~\eqref{eq:RHSCompatibility} holds on basis elements in $B_{i+2}$, so it holds in general.
\end{proof}

\begin{lemma}\label{lem:InternalExternalCompatibleLHS}
We have
\begin{equation}\label{eq:LHSCompatibility}
(\mu_{\B} \otimes \id) ((\id \otimes \delta^1_{\local}) \circ \delta^1_{\ext,\moving,i-1} + (\id \otimes \delta^1_{\ext,\moving,i-1}) \circ \delta^1_{\local}) = 0.
\end{equation}
\end{lemma}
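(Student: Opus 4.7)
The plan is to mirror the proof of Lemma~\ref{lem:InternalExternalCompatibleRHS}, with the strand $i-1$ on the left of the local piece playing the role of the strand $i+2$ on the right. First I would extend the notational convention of Section~\ref{sec:LocalDDDefs} on the left by a letter $Z$ denoting region $i-2$, so that subsets of $\{i-2,i-1,i,i+1\}$ are written as subwords of $ZABC$. A basis element $m = (\x_{\ext}, m_{\loc})$ of $M$ lies in $A_{i-1}$ precisely when its first global idempotent contains region $i-2$ but not region $i-1$, while its second global idempotent contains region $i-1$ but not region $i-2$. Inspection of Definition~\ref{def:XDD} shows that this forces $m_{\loc}$ to be one of ten types of local basis elements (the ones whose second local idempotent contains the letter $A$ but whose first does not); I would list these ten types together with their $B_{i-1}$ counterparts, with all other basis elements of $M$ lying in $C_{i-1}$.

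For basis elements in $C_{i-1}$, the map $\delta^1_{\ext,\moving,i-1}$ vanishes by definition, so the first composition in \eqref{eq:LHSCompatibility} gives zero. For the second composition, I only need to consider terms of $\delta^1_{\local}$ acting on an element $m \in C_{i-1}$ whose output lies in $A_{i-1}$ or $B_{i-1}$. Inspection of Figures~\ref{fig:UnsimplifiedLocalBimod12} and~\ref{fig:UnsimplifiedLocalBimod3} shows that whenever such a term arises, the subsequent multiplication by $L_{i-1}$ or $R_{i-1}$ from $\delta^1_{\ext,\moving,i-1}$ produces either a vanishing product in $\B(n)$ (by the relations $R_{i-1} R_i = 0$, $L_i L_{i-1} = 0$, and idempotent considerations) or a pair of terms that cancel in the sum. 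This is the tedious but straightforward analogue of the corresponding step in Lemma~\ref{lem:InternalExternalCompatibleRHS}.

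For basis elements in $A_{i-1}$ and $B_{i-1}$, I would then tabulate the nonzero terms of both compositions in \eqref{eq:LHSCompatibility} and verify by direct inspection that they match. The tabulation is formally parallel to the one for $A_{i+2}$ and $B_{i+2}$, but with $L_{i-1}$ and $R_{i-1}$ now multiplying local outputs on the $\B(n)$ side adjacent to the left boundary of the local piece rather than the right. The ten generators in $A_{i-1}$ correspond, under the $180^{\circ}$ rotational symmetry of $\X^{DD}$ coming from the composite $\Rc o$ described in Section~\ref{sec:DDSymm}, to the ten generators in $B_{i+2}$, and this structural correspondence guides the pairing of terms on the two sides of the equation.

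The main obstacle is the bookkeeping in the final tabulation step: correctly identifying each term of $\delta^1_{\local}$ acting on a generator of $A_{i-1}$ or $B_{i-1}$, tracking the resulting products with the moving external generators, and verifying the matching term by term. A more conceptual alternative would be to extend the $\Rc o$ symmetry of $\X^{DD}$ to a symmetry of the global bimodule $\X_i^{DD}$ exchanging the external contributions on the two sides of the local piece; this would reduce Lemma~\ref{lem:InternalExternalCompatibleLHS} directly to Lemma~\ref{lem:InternalExternalCompatibleRHS}, but verifying that this symmetry is compatible with $\delta^1_{\ext,\moving}$ appears to require essentially the same case analysis, so the direct computational approach is the most efficient path.
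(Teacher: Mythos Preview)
Your proposal is correct and follows essentially the same approach as the paper: extend the region labeling on the left (the paper uses the letter $G$ rather than $Z$), enumerate the ten types in $A_{i-1}$ and $B_{i-1}$, handle $C_{i-1}$ by the same straightforward check, and then tabulate both compositions on the twenty listed generators and match them term by term. The paper also remarks that the argument is not literally symmetric to Lemma~\ref{lem:InternalExternalCompatibleRHS} (since the unsimplified $\X^{DD}$ lacks the $\Rc$ symmetry), which agrees with your observation that appealing to $\Rc o$ would require essentially the same case analysis.
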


\begin{proof} 
The proof is similar to that of Lemma~\ref{lem:InternalExternalCompatibleRHS}, but it is not exactly symmetric (note that the unsimplified local bimodule $\X^{DD}$ does not have the symmetry $\Rc$). 

We extend the notation at the beginning of Section~\ref{sec:LocalDDDefs} from $\{i-1,i,i+1\}$ to $\{i-2,i-1,i,i+1,i+2\}$ using the letter G, so that subsets of $\{i-2,i-1,i,i+1\}$ are written as subwords of $GABC$ (we use the C major scale to find a letter that can reasonably be said to precede A). In this notation, there are ten types of basis elements of $M$ which lie in $A_{i-1}$, namely
\begin{align*}
& {_{GB}}E_t^{AB}, {_{GB}}E_b^{AB}, {_{GC}}(S^-_t)^{AB}, {_{GC}}(S^+_t)^{AB}, {_{G}}(S^-_t)^{ABC}, \\
& {_{G}}(S^+_t)^{ABC}, {_{G}}(S^-_b)^{ABC}, {_{G}}(S^+_b)^{ABC}, {_{GC}}(S^-_b)^{AB}, {_{GC}}(S^+_b)^{AB}. 
\end{align*}
Label these basis elements $(1)$--$(10)$ in order.

There are also ten types of basis elements in $B_{i-1}$, namely
\begin{align*}
& {_{AB}}E_t^{GB}, {_{AB}}E_b^{GB}, {_{AC}}(S^-_t)^{GB}, {_{AC}}(S^+_t)^{GB}, {_{A}}(S^-_t)^{GBC}, \\
& {_{A}}(S^+_t)^{GBC}, {_{A}}(S^-_b)^{GBC}, {_{A}}(S^+_b)^{GBC}, {_{AC}}(S^-_b)^{GB}, {_{AC}}(S^+_b)^{GB}.
\end{align*}
Label these basis elements $(1')--(10')$ in order.

The twenty-eight remaining types of basis elements of $M$ are in $C_{i-1}$. As in Lemma~\ref{lem:InternalExternalCompatibleRHS}, one can check that equation~\eqref{eq:LHSCompatibility} holds for these basis elements.

The results of applying $(\mu_{\B} \otimes \id) \circ (\id \otimes \delta^1_{\local}) \circ \delta^1_{\ext,\moving,i-1}(j)$ to the basis elements of type $(j)$ for $j \in 1, \ldots, n$ are given as follows:
\begin{itemize}
\item For $j = 1$, $(R_{i-1} U_i \otimes L_{i-1}) \otimes (2')$.
\item For $j = 2$, $(R_{i-1} R_{i+1} \otimes L_{i-1}) \otimes (3')$ and $(R_{i-1} R_{i+1} \otimes C_i U_{i+1} L_{i-1}) \otimes (4')$.
\item For $j = 3$, $(R_{i-1} L_{i+1} \otimes C_i U_{i+1} L_{i-1}) \otimes (1')$.
\item For $j = 4$, $(R_{i-1} L_{i+1} \otimes L_{i-1}) \otimes (1')$.
\item For $j = 5$, all terms are zero.
\item For $j = 6$, all terms are zero.
\item For $j = 7$, all terms are zero.
\item For $j = 8$, $(R_{i-1} \otimes L_{i-1}) \otimes (7')$.
\item For $j = 9$, $(R_{i-1}U_{i+1} \otimes L_{i-1}) \otimes (3')$ and $(R_{i-1} U_{i+1} \otimes L_{i-1} C_i U_{i+1}) \otimes (4')$.
\item For $j = 10$, $(R_{i-1} L_{i+1} \otimes L_{i-1}) \otimes (2')$ and $(R_{i-1} \otimes L_{i-1}) \otimes (9')$.
\end{itemize}

These terms are also the results of applying $(\mu_{\B} \otimes \id) \circ (\id \otimes \delta^1_{\ext,\moving,i-1}) \circ \delta^1_{\local}$ to the basis elements of type $(j)$, so equation~\eqref{eq:LHSCompatibility} holds for the basis elements in $A_{i-1}$.

For the basis elements in $B_{i-1}$, namely $(j')$ for $j \in 1, \ldots n$, the results of applying $(\mu_{\B} \otimes \id) \circ (\id \otimes \delta^1_{\local}) \circ \delta^1_{\ext,\moving,i-1}(j)$ are given as follows.
\begin{itemize}
\item For $j = 1$, $(L_{i-1} U_i \otimes R_{i-1}) \otimes (2)$.
\item For $j = 2$, $(L_{i-1} R_{i+1} \otimes R_{i-1}) \otimes (3)$ and $(L_{i-1}R_{i+1} \otimes R_{i-1} C_i U_{i+1}) \otimes (4)$.
\item For $j = 3$, $(L_{i-1} L_{i+1} \otimes R_{i-1} C_i U_{i+1}) \otimes (1)$.
\item For $j = 4$, $(L_{i-1} L_{i+1} \otimes R_{i-1}) \otimes (1)$.
\item For $j = 5$, all terms are zero.
\item For $j = 6$, all terms are zero.
\item For $j = 7$, all terms are zero.
\item For $j = 8$, $(L_{i-1} \otimes R_{i-1}) \otimes (7)$.
\item For $j = 9$, $(L_{i-1} U_{i+1} \otimes R_{i-1}) \otimes (3)$ and $(L_{i-1} U_{i+1} \otimes R_{i-1} C_i U_{i+1}) \otimes (4)$.
\item For $j = 10$, $(L_{i-1} L_{i+1} \otimes R_{i-1}) \otimes (2)$ and $(L_{i-1} \otimes R_{i-1}) \otimes (9)$.
\end{itemize}

These terms are also the terms of $(\mu_{\B} \otimes \id) \circ (\id \otimes \delta^1_{\ext,\moving,i-1}) \circ \delta^1_{\local}$ applied to the basis elements in $B_{i-1}$. Thus, equation~\ref{eq:LHSCompatibility} holds.
\end{proof}

\begin{proposition}\label{prop:InternalExternalCompatibility}
We have
\[
(\mu_{\B} \otimes \id) ((\id \otimes \delta^1_{\local}) \circ \delta^1_{\external} + (\id \otimes \delta^1_{\external}) \circ \delta^1_{\local}) = 0.
\]
\end{proposition}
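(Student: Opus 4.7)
The proof is essentially a bookkeeping argument that assembles the four preceding lemmas. The plan is to decompose the external differential according to its defining pieces and apply the corresponding compatibility lemma to each piece. Explicitly, I would write
\[
\delta^1_{\external} = \delta^1_{\ext,\unmoving} + \sum_{j \neq i,i+1} \delta^1_{\ext,\moving,j},
\]
so that the bilinearity of $(\mu_{\B}\otimes \id)\circ(\id \otimes -)$ and of $(\mu_{\B}\otimes \id)\circ(-\otimes \id)$ in the position where $\delta^1_{\local}$ or $\delta^1_{\external}$ sits lets us split the desired expression into the sum
\[
(\mu_{\B}\otimes\id)\bigl((\id\otimes\delta^1_{\local})\circ\delta^1_{\ext,\unmoving} + (\id\otimes\delta^1_{\ext,\unmoving})\circ\delta^1_{\local}\bigr)
\]
plus, for each external strand $j$, the analogous expression with $\delta^1_{\ext,\moving,j}$ in place of $\delta^1_{\ext,\unmoving}$.

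First I would dispose of the unmoving piece via Lemma~\ref{lem:InternalExternalCompatibleUnmoving}, which already shows that this summand vanishes; the key observation there was that $\delta^1_{\ext,\unmoving}$ acts as multiplication by a central element, so it commutes with $\delta^1_{\local}$ and the two terms cancel mod $2$. Next, for each $j \notin \{i-1,i,i+1,i+2\}$, Lemma~\ref{lem:InternalExternalCompatibleFarStrands} takes care of the corresponding summand: such a $j$ is far enough from the local region that the algebra generators $R_j,L_j$ commute past all local outputs of $\delta^1_{\local}$ and the partitions $A_j,B_j,C_j$ are preserved by $\delta^1_{\local}$.

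The two remaining strands $j = i-1$ and $j = i+2$ are exactly the ones handled by Lemmas~\ref{lem:InternalExternalCompatibleRHS} and \ref{lem:InternalExternalCompatibleLHS}, whose proofs proceeded by enumerating the generators of $A_j$ and $B_j$ and matching terms explicitly. Since these lemmas were the serious content (and indeed the main obstacle, because $\X^{DD}$ lacks the $\Rc$-symmetry that would exchange them, forcing separate case-by-case checks), the present proposition is just their formal assembly. Thus the proof reduces to writing
\[
(\mu_{\B}\otimes\id)\bigl((\id\otimes\delta^1_{\local})\circ\delta^1_{\external}+(\id\otimes\delta^1_{\external})\circ\delta^1_{\local}\bigr) = 0 + \sum_{j\neq i,i+1} 0 = 0,
\]
where the zero summands are Lemma~\ref{lem:InternalExternalCompatibleUnmoving}, Lemma~\ref{lem:InternalExternalCompatibleFarStrands} (for $|j-i|,|j-(i+1)|>1$), Lemma~\ref{lem:InternalExternalCompatibleRHS} (for $j=i+2$), and Lemma~\ref{lem:InternalExternalCompatibleLHS} (for $j=i-1$). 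Combined with Propositions~\ref{prop:HalfLocalDD} and \ref{prop:DDRelsForExternal}, this yields the full $DD$ structure relation for $\X_i^{DD}$.
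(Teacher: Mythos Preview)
Your proof is correct and matches the paper's approach exactly: the paper's proof of this proposition is the one-line statement that the result follows from combining Lemmas~\ref{lem:InternalExternalCompatibleUnmoving}, \ref{lem:InternalExternalCompatibleFarStrands}, \ref{lem:InternalExternalCompatibleRHS}, and \ref{lem:InternalExternalCompatibleLHS}, which is precisely the decomposition and case analysis you wrote out. Your final sentence about deducing the full $DD$ relation from Propositions~\ref{prop:HalfLocalDD} and \ref{prop:DDRelsForExternal} is strictly the content of the subsequent corollary rather than of this proposition, but that does not affect the argument here.
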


\begin{proof} 
This result follows from combining Lemmas \ref{lem:InternalExternalCompatibleUnmoving}, \ref{lem:InternalExternalCompatibleFarStrands}, \ref{lem:InternalExternalCompatibleRHS}, and \ref{lem:InternalExternalCompatibleLHS}.
\end{proof}

\begin{corollary}
The $DD$ operation $\delta^1$ on $\X_i^{DD}$ satisfies the $DD$ bimodule relation.
\end{corollary}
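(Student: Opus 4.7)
The plan is to deduce this corollary as an immediate algebraic consequence of the three structural results already established: Proposition~\ref{prop:HalfLocalDD}, Proposition~\ref{prop:DDRelsForExternal}, and Proposition~\ref{prop:InternalExternalCompatibility}. No further combinatorial or diagrammatic work should be needed; the real content lives in those earlier propositions (especially the case-by-case verifications in Lemmas~\ref{lem:InternalExternalCompatibleRHS} and \ref{lem:InternalExternalCompatibleLHS}), and the corollary just assembles them.

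Concretely, I would begin by writing $\delta^1 = \delta^1_{\local} + \delta^1_{\external}$ and then expanding the quantity
\[
(\partial_{\B} \otimes \id) \circ \delta^1 + (\mu_{\B} \otimes \id) \circ (\id \otimes \delta^1) \circ \delta^1
\]
by bilinearity of composition and of $\mu_{\B} \otimes \id$ in each slot. This produces six summands, which split naturally into three groups: a purely local group consisting of $(\partial_{\B} \otimes \id) \circ \delta^1_{\local}$ together with $(\mu_{\B} \otimes \id) \circ (\id \otimes \delta^1_{\local}) \circ \delta^1_{\local}$; a purely external group consisting of $(\partial_{\B} \otimes \id) \circ \delta^1_{\external}$ together with $(\mu_{\B} \otimes \id) \circ (\id \otimes \delta^1_{\external}) \circ \delta^1_{\external}$; and a mixed group consisting of $(\mu_{\B} \otimes \id) \circ (\id \otimes \delta^1_{\local}) \circ \delta^1_{\external}$ together with $(\mu_{\B} \otimes \id) \circ (\id \otimes \delta^1_{\external}) \circ \delta^1_{\local}$.

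The first group vanishes by Proposition~\ref{prop:HalfLocalDD}, the second by Proposition~\ref{prop:DDRelsForExternal}, and the mixed group by Proposition~\ref{prop:InternalExternalCompatibility}. Summing the three zero contributions yields the $DD$ bimodule relation for $\delta^1$, completing the proof.

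There is no significant obstacle: the only mild subtlety is verifying that the bilinear expansion really does produce exactly the six summands above and that $(\partial_{\B} \otimes \id)$ interacts distributively with the sum $\delta^1_{\local} + \delta^1_{\external}$, but both are immediate from the $\F$-linearity of all maps involved. All of the genuine work — the strand-by-strand cancellations in the external part, the nine-algebra-input-sequence bookkeeping passed through from the local $DA$ verification, and the $A_j$/$B_j$/$C_j$ case analysis near the boundary strands $i-1$ and $i+2$ — has already been carried out in the lemmas preceding this corollary.
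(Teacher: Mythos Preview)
Your proposal is correct and matches the paper's own proof exactly: the paper simply states that the $DD$ bimodule relation is a consequence of Propositions~\ref{prop:HalfLocalDD}, \ref{prop:DDRelsForExternal}, and \ref{prop:InternalExternalCompatibility}, and your expansion into local, external, and mixed groups is precisely the implicit decomposition underlying that sentence.
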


\begin{proof}
The $DD$ bimodule relation is a consequence of Propositions \ref{prop:HalfLocalDD}, \ref{prop:DDRelsForExternal}, and \ref{prop:InternalExternalCompatibility}.
\end{proof}

\bibliographystyle{fouralpha}
\bibliography{biblio}

\end{document}